\crefname{hypothesis}{Hypothesis}{Hypotheses}
\crefname{assumption}{Assumption}{Assumptions}
\title{Theoretical analysis of the randomized subspace regularized Newton method for non-convex optimization
\thanks{
\funding{This work was supported by JSPS KAKENHI (17H01699 and 19H04069) and JSTERATO (JPMJER1903).}}}
\author{Terunari Fuji\footnote{The first two authors contributed equally to this work.}\thanks{ Graduate School of Information Science and Technology, The University of Tokyo, Tokyo 113-8656, Japan.
  (\email{teru0818258@g.ecc.u-tokyo.ac.jp}).}
\and Pierre-Louis Poirion\thanks{Center for Advanced Intelligence Project, RIKEN, Tokyo 103-0027, Japan.
  (\email{pierre-louis.poirion@riken.jp}).}
\and Akiko Takeda\thanks{Graduate School of Information Science and Technology, The University of Tokyo, Tokyo 113-8656, Japan and
Center for Advanced Intelligence Project, RIKEN, Tokyo 103-0027, Japan.
  (\email{takeda@mist.i.u-tokyo.ac.jp}).}
}
\newcommand{\eps}{\varepsilon}
\newcommand{\R}{\mathbb R}
\newcommand{\norm}[1]{\left\|#1\right\|}
\newcommand{\T}{{\mathsf T}}
\newcommand{\C}{{\mathcal C}}
\DeclareMathOperator{\Range}{Range}
\DeclareMathOperator*{\argmin}{arg~min}
\newcommand{\COMM}[2]{{
\ifthenelse{\equal{#1}{TF}}{\color{red}}{
\ifthenelse{\equal{#1}{PL}}{\color{blue}}{
\ifthenelse{\equal{#1}{AT}}{\color{magenta}}}}[#1: #2]}}
\colorlet{linkcolor}{green!50!black}
\colorlet{exlinkcolor}{black}
\newcommand{\rank}{\mathrm{rank}}
\newcommand{\Ker}{\mathrm{Ker}}
\newcommand{\Img}{\mathrm{Im}}
\begin{document}

\maketitle

\begin{abstract}
While there already exist randomized subspace Newton methods that restrict the search direction to a random subspace for a convex function, 
we propose a randomized subspace regularized Newton method for a non-convex function {and more generally we investigate thoroughly, for the first time, the local convergence rate of the randomized subspace Newton method}. 
In our proposed algorithm, we use a modified Hessian of the function restricted to some random subspace so that, 
with high probability, the function value decreases at each iteration, even when the objective function is non-convex.
We show that our method has global convergence under appropriate assumptions and its convergence rate is the same as that of the full regularized Newton method. 
Furthermore, we obtain a local linear convergence rate under some additional assumptions, and prove that this rate is the best we can hope, in general, when using a random subspace. We furthermore prove that if the Hessian, at the local optimum, is rank deficient then super-linear convergence holds.
\end{abstract}

\begin{keywords}
  random projection, Newton method, non-convex optimization, local convergence rate
\end{keywords}

\begin{AMS}
  90C26, 90C30
\end{AMS}

\section{Introduction} \label{sec:introduction}
While first-order optimization methods such as stochastic gradient descent methods are well studied for large-scale machine learning optimization,
second-order optimization methods have not received much attention due to the high cost of computing second-order information until recently.
However, in order to overcome relatively slow convergence of first-order methods,
there has been recent interest in second-order methods that aim to achieve faster convergence speed by utilizing subsampled Hessian information and stochastic Hessian estimate
(see e.g., \cite{Bottou18,Xu20,Yao21} and references therein). 

In this paper, we develop a Newton-type iterative method with random projections for the following unconstrained optimization problem:
\begin{equation}\label{eq:main_problem}
  \min_{x\in\R^n} f(x),
\end{equation}
where $f:\R^n \to \R$ is a possibly non-convex twice differentiable function. In our method, at each iteration, we restrict the function $f$ to a random subspace and compute the next iterate by choosing a descent direction on this random subspace.

There are some existing studies on developing second-order methods with random subspace techniques  for convex optimization problems \eqref{eq:main_problem}. 
Let us now review randomized subspace Newton (RSN) existing work \cite{gower2019rsn}, while gradient-based
randomized subspace algorithms are reviewed in Section~\ref{sec:relatedworks}.
RSN  
computes the descent direction $d_k$ and the next iterate  as
\begin{align*}
    d_k^{\rm RSN} &= -P_k^\T(P_k\nabla^2 f(x_k)P_k^\T)^{-1}P_k\nabla f(x_k),\\
    x_{k+1} &= x_k + \dfrac{1}{\hat L}d_k^{\rm RSN},
\end{align*}
where $P_k\in \R^{s\times n}$ is a random matrix with $s < n$ and $\hat L$ is some fixed constant.
RSN is expected to be highly computationally efficient with respect to the original Newton method, since it does not require computation of the full Hessian inverse.
RSN is also shown to achieve a global linear convergence for strongly convex $f$. We first note that the second-order Taylor approximation around $x_k$ restricted in the affine subspace $\{x_k\} + \Range(P_k^\T)$ is expressed as
\begin{equation*}
  f(x_k +P_k^\T u) \simeq f(x_k) + \nabla f(x_k)^\T P_k^\T u + \frac{1}{2} u^\T P_k\nabla^2 f(x_k)P_k^\T u,
\end{equation*}
and the direction $d_k^{\rm RSN}$ is obtained as $d_k^{\rm RSN} = P_k^\T u_k^*$ where $u_k^*$ is the minimizer of the above subspace Taylor approximation, i.e., 
\begin{equation*}
  u_k^* = \argmin_{u\in\R^s} \left(f(x_k) + \nabla f(x_k)^\T P_k^\T u + \frac{1}{2} u^\T P_k\nabla^2 f(x_k)P_k^\T u\right).
\end{equation*}
Hence, we can see that the next iterate of RSN is computed by using the Newton direction for the function
\begin{equation}\label{eq:randfun}
f_{x_k}\ :\ u \mapsto f(x_k+P_k^\top u).
\end{equation}
Other second-order subspace descent methods, such as cubically-regularized subspace Newton methods, \cite{cubicNewton}, have been studied in the literature. More precisely, the method in \cite{cubicNewton} can be seen as a stochastic extension of the cubically-regularized Newton method \cite{nesterov2006} and also as a second-order enhancement of stochastic subspace descent \cite{kozak2019stochastic}. In \cite{kovalev2020}, a random subspace version of the BFGS method is proposed. The authors prove local linear convergence, if the function is assumed to be self-concordant.
Apart in recent Shao's Ph.D thesis \cite{Shao} and the associated papers \cite{GNC,CJZ} which have been done parallelly to this paper, to the best of our knowledge, 
 existing second-order subspace methods have iteration complexity analysis only for convex optimization problems.
 
The thesis \cite{Shao} and the paper \cite{CJZ} propose a random subspace adaptive regularized cubic method for unconstrained non-convex optimization and show
 a global convergence property with sub-linear rate to a stationary point\footnote{The author also proves that if the Hessian matrix has low rank and scaled Gaussian sketching matrices are used, then the Hessian at the stationary point is approximately positive semidefinite with high probability.}.
 In this paper we propose a new subspace method based on the regularized Newton method and discuss the local convergence rate together with
 global iteration complexity.\footnote{Just as the ordinary cubic method is superior to the Newton method in terms of iteration complexity, similar observation seems to hold between the subspace cubic method \cite{Shao} and ours.} 
 {Notice indeed that, to the best of our knowledge, the local convergence of such methods never seems to have been thoroughly studied\footnote{Some papers, as we will see later, investigate when local linear convergence holds.}; one would expect super-linear convergence for second order methods and no papers discuss whether super-linear convergence holds or not for second order methods. Indeed any iterative algorithm can be easily adapted to a random subspace method as it suffices to apply it to the function  restricted to the subspace: $u \mapsto f(x_k+P^\top_ku)$. We therefore believe that it is important to investigate thoroughly whether the properties of such full-space algorithms are preserved or not when adapted to the random subspace setting.}


If the objective function $f$ is not convex, the Hessian is not always positive semidefinite and $d_k^{\rm RSN}$ is not guaranteed to be a descent direction 
so that we need to use a modified Hessian.
Based on the regularized Newton method (RNM) for the unconstrained non-convex optimization \cite{ueda2010convergence, ueda2014regularized},
we propose the randomized subspace regularized Newton method (RS-RNM):
\begin{align*}
  d_{k} &= -P_k^\T(P_k\nabla^2 f(x_k)P_k^\T + \eta_k I_s)^{-1}P_k\nabla f(x_k),\\
  x_{k+1} &= x_k + t_k d_k,
\end{align*}
where $\eta_k$ is defined to ensure that search direction $d_k$ is a descent direction and the step size $t_k$ is chosen so that it satisfies Armijo's rule.
As with RSN, this algorithm is expected to be computationally efficient since we use projections onto lower-dimensional spaces.
In this paper, we first show that
RS-RNM has global convergence under appropriate assumptions, more precisely, we have $\norm{\nabla f(x_k)} \le \eps$ after at most $O(\eps^{-2})$ iterations with some probability, 
which is the same as the global convergence rate shown in \cite{ueda2010convergence} for the full regularized Newton method. We then prove that under additional assumptions, we can obtain a linear convergence rate locally. In particular, one contribution of the paper is to propose, to the best of our knowledge, the weakest conditions until now for local linear convergence. To do so we will extensively use the fact that the subspace is chosen at random. From these conditions, we can derive a
random-projection version of the Polyak-Lojasiewicz (PL) inequality \eqref{eq:pl},
\begin{equation}\label{eq:pl}
	\forall x\in \mathbb{R}^n, \quad \|\nabla f(x)\|^2 \ge c_0 (f(x)-f(x^*)),
\end{equation}
 which will be satisfied when the function is restricted to a random subspace. One other contribution of this paper is to prove that, in general, linear convergence is the best rate we can hope for this method.{ Furthermore, we also prove that if the Hessian at the local optima is rank deficient, then one can achieve super-linear convergence using a subspace dimension $s$ large enough.}

 {Our randomized subspace method for nonconvex optimization problems is based on the regularized Newton method in \cite{ueda2010convergence, ueda2014regularized}. While various other regularized Newton methods have been proposed in recent years, most of them are for convex problems or non-smooth optimization problems.
   For example, \cite{mordukhovich2023} presents a globally convergent proximal Newton-type method for non-smooth convex optimization and \cite{chao2024} develops coderivative-based Newton methods combined with Wolfe line-search for non-smooth problems. Recently \cite{yamakawa} proposes a generalized regularization method that includes quadratic, cubic, and elastic net regularizations. Also \cite{Doikov24} proposes, in the convex case, a variant of the Newton method with quadratic regularization and proves better global rate. Recent papers, \cite{gratton,zhou2025,zhu2024}, propose regularization methods for the non-convex case. However, although these methods obtained better iterations complexity, the subroutines involved to compute are quite complex and not as simple as in \cite{ueda2010convergence, ueda2014regularized}.
   By applying similar random subspace techniques to these methods, we may be able to develop random subspace variants with state-of-the-art theoretical guarantees, but that is a topic for future work. }

 The rest of this paper is organized as follows. After reviewing gradient-based
randomized subspace algorithms and introducing properties of random projections in \Cref{sec:preliminaries},
we introduce our random subspace Newton method for non-convex functions in~\Cref{sec:RSRNM_algorithm}. In~\Cref{sec:RSRNM_global}, 
we prove global convergence properties for our method. {In~\Cref{sec:local1}, we investigate local linear convergence as well as local super-linear convergence. Finally, in~\Cref{sec:num}, we show some numerical examples to illustrate the theoretical properties derived in the paper}. In~\Cref{sec:conclusions} we conclude the paper.

\section{Preliminaries}\label{sec:preliminaries}

\paragraph{Notation:}
In this paper we call a matrix $P \in \R^{s\times n}$ a random projection matrix or a random matrix when its entries $P_{ij}$ are independently sampled from the normal distribution $\mathrm N (0, 1/s)$. Let $I_n$ be the identity matrix of size $n$. We denote by $g_k$ the gradient of the $k$-th iterate of the obtained sequence and by $H_k$ it's Hessian.

\subsection{Related optimization algorithms using random subspace} \label{sec:relatedworks}
As introduced in Section~\ref{sec:introduction}, random subspace techniques are used for second-order optimization methods
with the aim of reducing the size of Hessian matrix.
Here we refer to other types of subspace methods focusing on their convergence properties.

{Cartis et al. \cite{cartis2023global} proposed a general framework to 
	investigate a general random embedding framework for global optimization of a function $f$. 
	The framework projects the original problem onto a random subspace and  solves the reduced subproblem in each iteration:
	\[
	\min_{u} f(x_{k} + P_k^\top u) ~~ \mbox{subject to}~ x_{k} + P_k^\top u \in \mathcal{C}.
	\]
	These subproblems need to be solved to some required accuracy by using a deterministic global optimization algorithm. This study is further expanded in \cite{cartisOtessimov} and \cite{cartis2022bound}, when $f$ has low effective dimension.} 

There are also various subspace first-order methods based on coordinate descent methods (see e.g. \cite{wright2015coordinate}).
In \cite{chen2020randomized} a randomized coordinate descent algorithm is introduced assuming some subspace decomposition which is suited to the $A$-norm, where $A$ is a given preconditioner.
In \cite{AdaptiveRand},  minimizing $f(\tilde{A}x)+\frac{\lambda}{2}\|x\|^2$, where $f$ is a strongly convex smooth function
and $\tilde{A}$ is a high-dimensional matrix, is considered and a new randomized optimization method that can be seen as a generalization of coordinate descent to random subspaces is proposed. The paper \cite{grishchenko2021proximal} deals with a convex optimization problem $\min\limits_x f(x)+g(x)$, where $f$ is convex and differentiable and $g$ is assumed to be convex, non-smooth and sparse inducing such as $\|x\|_1$. To solve the problem, they propose a randomized proximal algorithm leveraging structure identification: the variable space is sampled according to the structure of $g$.
The approach in \cite{randomPursuit} is to optimize a smooth convex function by choosing, at each iteration a random direction on the sphere.
Recently, in some contexts such as iteration complexity analysis, the assumption of strong convexity has been replaced by
a weaker one, the PL inequality \eqref{eq:pl}. 
Indeed,
\cite{kozak2021stochastic} has introduced a new first-order random subspace
and proved that if the non-convex function is differentiable with a Lipschitz first derivative and satisfies the PL inequality \eqref{eq:pl} 
then linear convergence rate is obtained in expectation. 
Notice that in all these papers a local linear convergence rate is only obtained when assuming that the objective function is, at least locally, strongly convex or satisfies the PL inequality.

From above, without (locally) strong convexity nor the PL inequality, it seems difficult to construct first-order algorithms having (local) linear convergence rates.
Indeed, the probabilistic direct-search method \cite{roberts2022direct} in reduced random spaces is applicable to both convex and non-convex problems but it obtains sub-linear convergence.

In this paper, we will prove that our algorithm achieves local linear convergence rates without locally strong
convexity nor the PL inequality assumption on the full space. This is due to randomized Hessian information used in our algorithm. {More precisely,  our assumptions will allow us to prove that the function, restricted to a random subspace, satisfies a condition similar to the PL inequality. }

\subsection{Properties of random projection}
In this section, we recall basic properties of random projection matrices.
One of the most important features of a random projection defined by a random matrix 
is that it nearly
preserves the norm of any given vector with arbitrary high probability.
The following lemma is known as a variant of the Johnson-Lindenstrauss lemma  \cite{jllemma}.

\begin{lemma}[\protect{\cite[Lemma 5.3.2, Exercise 5.3.3]{vershynin2018high}}] 
  \label{lem:JLL_random_matrix}
  Let $P\in \R^{s\times n}$ be a random matrix
  whose entries $P_{ij}$ are independently drawn from $\mathrm N(0,1/s)$.
  Then for any $x\in\R^n$ and $\eps\in(0,1)$, we have
  $${\rm Prob\ }[(1-\eps)\norm{x}^2 \le \norm{Px}^2 \le (1+\eps)\norm{x}^2]
    \ge 1-2\exp(-\mathcal C_0 \eps^2 s), $$
  where $\mathcal C_0$ is an absolute constant.
\end{lemma}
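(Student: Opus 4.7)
The plan is to reduce the statement to a concentration inequality for a chi-squared random variable. By the homogeneity of the claim in $x$, I would first assume without loss of generality that $\|x\| = 1$, since both $\|Px\|^2$ and $\|x\|^2$ scale quadratically and the event in the probability is scale-invariant.

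Next, I would observe the distributional structure of $Px$. Writing $P$ row-wise as $P = (p_1, \ldots, p_s)^\T$, each entry $p_i^\T x$ is a linear combination $\sum_j P_{ij} x_j$ of independent $\mathrm N(0, 1/s)$ variables, so $p_i^\T x \sim \mathrm N(0, \|x\|^2/s) = \mathrm N(0, 1/s)$, and the $s$ entries are mutually independent since the rows of $P$ are independent. Consequently $s\|Px\|^2 = \sum_{i=1}^s (\sqrt{s}\, p_i^\T x)^2$ is the sum of squares of $s$ i.i.d.\ standard normals, i.e.\ $\chi_s^2$-distributed with mean $s$. The desired event $(1-\eps)\|x\|^2 \le \|Px\|^2 \le (1+\eps)\|x\|^2$ therefore translates into $|W - s| \le \eps s$ for a chi-squared variable $W$ with $s$ degrees of freedom.

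The remaining work is to establish a two-sided sub-Gaussian-type tail bound $\Pr[|W - s| > \eps s] \le 2\exp(-\mathcal C_0 \eps^2 s)$ for $\eps \in (0,1)$. I would do this by the standard Chernoff/Cramér method: compute the moment generating function of a single $Z_i^2 - 1$ (which equals $(1-2\lambda)^{-1/2} e^{-\lambda}$ for $\lambda < 1/2$), multiply over the $s$ independent coordinates, optimize the parameter $\lambda$, and use the elementary inequality $-\log(1-u) - u \le u^2/(1-u)$ on the upper tail and the analogous bound on the lower tail, restricting to $\eps\in(0,1)$ to collapse both directions into a single quadratic exponent $\mathcal C_0 \eps^2 s$. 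Combining the two one-sided bounds by a union bound yields the factor $2$ on the right-hand side.

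The main obstacle is the last step: the chi-squared MGF computation requires careful bookkeeping to ensure the exponent comes out genuinely quadratic in $\eps$ (with an absolute constant $\mathcal C_0$ independent of $s$, $n$, $x$) rather than degrading for $\eps$ near $1$. The upper tail is the tighter of the two and dictates the admissible range of $\lambda$; once this optimization is carried out, the lower tail is strictly easier because the relevant transform is analytic on all of $(-\infty, 1/2)$. Everything else in the argument is routine.
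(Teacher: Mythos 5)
Your argument is correct and is essentially the standard proof of the Gaussian Johnson--Lindenstrauss lemma: the paper itself gives no proof, citing \cite[Lemma 5.3.2, Exercise 5.3.3]{vershynin2018high}, and the route used there is exactly your reduction to $\chi_s^2$ concentration via rotation invariance followed by a Chernoff/Bernstein bound. The one point worth keeping explicit in a write-up is the restriction $\eps\in(0,1)$, which is what lets the sub-exponential tail $\exp(-c\,s\min(\eps^2,\eps))$ collapse to the purely quadratic exponent $\exp(-\mathcal C_0\eps^2 s)$; you have already flagged this, so nothing is missing.
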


The next lemma shows that when $P$ is a Gaussian matrix, we can obtain a bound on the norm of $PP^\top$.
\begin{lemma} \label{lem:concentration_of_PP}
	For a $s\times n$ random matrix $P$ whose entries are sampled from $\mathrm N (0, 1/s)$, there exists a constant $\bar{\mathcal{C}}>0$ such that 
	\begin{equation*}
		\norm{P P^\T} (=\norm{P^\T P} = \norm{P}^2) \le \bar{\mathcal{C}}\frac{n}{s},
	\end{equation*}
	with probability at least $1-2e^{-s}.$
\end{lemma}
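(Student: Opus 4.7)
The plan is to reduce the claim to the well-known concentration inequality for the operator norm of a standard Gaussian matrix. Write $P = G/\sqrt{s}$, where $G \in \R^{s \times n}$ has i.i.d.\ entries drawn from $\mathrm N(0,1)$. Then $\norm{PP^\T} = \norm{P}^2 = \norm{G}^2/s$, so it suffices to bound $\norm{G}$ from above with probability at least $1-2e^{-s}$.

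The key tool I would invoke is the standard estimate (see e.g.\ Vershynin, Theorem 4.4.5) that for any $t\ge 0$,
\begin{equation*}
\Prob\bigl[\norm{G} \le \sqrt{s} + \sqrt{n} + t\bigr] \ge 1 - 2e^{-t^2/2}.
\end{equation*}
This follows from Gordon's inequality, which yields $\mathbb E\norm{G} \le \sqrt{s}+\sqrt{n}$, combined with Gaussian concentration applied to $G \mapsto \norm{G}$, which is $1$-Lipschitz with respect to the Frobenius norm. If one prefers a self-contained derivation, an $\eps$-net argument on the unit sphere combined with the sub-Gaussian tail of $\norm{Px}^2$ provided by \cref{lem:JLL_random_matrix} gives the same conclusion with slightly worse constants.

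Next, I would plug in $t = \sqrt{2s}$, so that the failure probability becomes exactly $2e^{-s}$ as required, and the norm bound becomes $\norm{G} \le (1+\sqrt{2})\sqrt{s} + \sqrt{n}$. Squaring via $(a+b)^2 \le 2(a^2+b^2)$ gives $\norm{G}^2 \le 2(1+\sqrt 2)^2 s + 2n$, so
\begin{equation*}
\norm{P}^2 = \frac{\norm{G}^2}{s} \le 2(1+\sqrt 2)^2 + \frac{2n}{s}.
\end{equation*}
Since the lemma is used in the regime $s < n$ where random projection is meaningful, the constant term is dominated by an absolute multiple of $n/s$, and the desired constant $\mathcal C$ can be extracted.

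There is no real obstacle here; the argument is essentially just the invocation of a named Gaussian-matrix estimate followed by a one-line substitution. The only small care needed is to (i) choose the deviation parameter $t$ so that $e^{-t^2/2} = e^{-s}$ matches the target probability, and (ii) absorb the additive $O(1)$ term into $n/s$ under the mild assumption $s \le n$ (which is the regime of interest in the remainder of the paper).
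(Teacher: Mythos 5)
Your argument is correct, and it reaches the stated bound by a slightly different route than the paper. The paper invokes the two-sided concentration of the Gram matrix (Vershynin, Theorem 4.6.1), obtaining $\norm{\tfrac{s}{n}PP^\T - I_s} \le 2\bar{C}\tfrac{s}{n}$ with the stated probability, and then applies the triangle inequality $\norm{PP^\T} \le \norm{PP^\T - \tfrac{n}{s}I_s} + \tfrac{n}{s} \le 2\bar{C} + \tfrac{n}{s}$ before absorbing the additive constant into a multiple of $n/s$. You instead bound the operator norm of the unnormalized Gaussian matrix directly via $\mathbb{E}\norm{G} \le \sqrt{s}+\sqrt{n}$ plus Gaussian concentration for the $1$-Lipschitz map $G \mapsto \norm{G}$ (Theorem 4.4.5), then square. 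The two approaches are essentially equivalent in strength here; yours is arguably more elementary (a one-sided norm bound suffices, whereas the approximate-isometry statement is stronger than needed), while the paper's choice of Theorem 4.6.1 keeps the citation consistent with its later use of the same theorem in the singular-value bounds \eqref{eq:sv_PT}. One point worth noting: both proofs silently require $s \le n$ to absorb the additive $O(1)$ term into $\mathcal{C}\tfrac{n}{s}$ --- the paper does this in the step $2\bar{C} + \tfrac{n}{s} \le 2\bar{C}\tfrac{n}{s} + \tfrac{n}{s}$ --- and you are right to flag this explicitly; it is harmless since the whole paper operates in the regime $s < n$. Your choice $t=\sqrt{2s}$ correctly matches the failure probability $2e^{-s}$, and the final constant $\mathcal{C} = 2(1+\sqrt{2})^2 + 2$ is valid.
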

\proof{Proof.}
	By \cite[Theorem 4.6.1]{vershynin2018high}, there exists a constant $\bar{C}$ such that
	\begin{equation*}
		\left\|\frac{s}{n}PP^\top -I_s\right\| \le 2\bar{C}\sqrt\frac{s}{n}
	\end{equation*}
	holds with probability at least $1-e^{-s}$. Therefore, we have 
	\begin{align*}
	  \norm{PP^\T} \le \norm{PP^\T - \frac{n}{s} I_s} + \norm{\frac{n}{s} I_s} \le 2\bar{C}\sqrt\frac{n}{s}+\frac{n}{s}\le  2\bar{C}\frac{n}{s}+\frac{n}{s} =
           (2\bar{C}+1)\frac{n}{s} .
	\end{align*}
	Setting $\bar{\mathcal{C}}=2\bar{C}+1$ ends the proof.
\endproof

All the results of this paper are stated in a probabilistic way. In the proofs we will constantly use the following fact:
\begin{equation}\label{eq:unionbound}
	\mbox{For any two events }E_1\mbox{ and }E_2:\ \rm{Prob}(E_1\cap E_2) \ge 1-\left((1-\rm{Prob}(E_1))+(1-\rm{Prob}(E_2))\right).
\end{equation}






\section{Randomized subspace regularized Newton method}\label{sec:RSRNM_algorithm}
In this section,
we describe a randomized subspace regularized Newton method (RS-RNM) for the following unconstrained minimization problem, 
\begin{equation}
  \min_{x\in \R^n} f(x),
\end{equation}
where $f$ is a twice continuously differentiable function from $\R^n$ to $\R$.
In what follows, we denote the gradient $\nabla f(x_k)$ and the Hessian $\nabla^2 f(x_k)$ as $g_k$ and $H_k$, respectively.

 
The paper \cite{ueda2010convergence} develops 
a regularized Newton methods (RNM) that constructs a sequence of iterates with the following update rule:
\begin{equation*}
  x_{k+1} = x_{k} - t_k(H_k + c_1'\Lambda'_k I_n + c_2'\norm{g_k}^{\gamma'}I_n)^{-1} g_k,
\end{equation*}
where $\Lambda'_k = \max(0,-\lambda_{\min}( H_k))$, $c_1', c_2', \gamma'$ are some positive parameter values and $t_k$ is the step-size chosen by Armijo's step size rule, 
and show that this algorithm achieves $\norm{g_k}\le\eps$ after at most $O(\eps^{-2})$ iterations and 
it has a super-linear rate of convergence in a neighborhood of a local optimal solution under appropriate conditions.

To increase the computational efficiency of this algorithm using random projections, 
based on the randomized subspace Newton method~\cite{gower2019rsn},
we propose the randomized subspace regularized Newton method (RS-RNM) with Armijo's rule,  which is described in \Cref{alg:RSN} and outlined below.  
Since RS-RNM is a subspace version of RNM, 
all discussions of global convergence guarantees made in \Cref{sec:RSRNM_global} are based on the one in \cite{ueda2010convergence}.

Let $\mathcal D$ denote the set of Gaussian matrices of size $s\times n$ whose entries are independently sampled from $\mathrm N (0, 1/s)$.
With a Gaussian random matrix $P_k$ from $\mathcal D$, 
 the regularized sketched Hessian is defined by:
          \begin{equation}\label{def:Mk}
            M_k := P_k H_k P_k^\T + \eta_k I_s \in \R^{s\times s},
          \end{equation}
          where $\eta_k:=c_1\Lambda_k  + c_2\norm{g_k}^\gamma$ and 
          $\Lambda_k := \max(0,-\lambda_{\min}(P_k H_k P_k^\T))$. We then compute the search direction:
          \begin{equation}\label{eq:direction}
            d_k := -P_k^\T M_k^{-1} P_k g_k.
          \end{equation}
          The costly part of Newton-based methods, the inverse computation of a (approximate) Hessian matrix, is done in the subspace of size $s$. We note that $d_k$ defined by \eqref{eq:direction} is a descent direction for $f$ at $x_k$, i.e.,
          $g_k^\top d_k < 0$ if $g_k \neq 0$, since it turns out that $M_k$ is positive definite from the definition of $\Lambda_k$, and therefore $x^\top P_k^\T M_k^{-1} P_k x >0$ holds for $\forall x$ due to $P_k x \neq 0$ with high probability.

    The backtracking line search with Armijo's rule finds the smallest integer $l_k \ge 0$ such that
    \begin{equation}\label{eq:Armijo_rule}
      f(x_k) - f(x_k + \beta^{l_k} d_k) \ge -\alpha \beta^{l_k} g_k^\T d_k.
    \end{equation}
Starting with $l_k=0$, $l_k$ is increased by $l_k \leftarrow l_k+1$ until the condition \eqref{eq:Armijo_rule} holds. The sufficient iteration number to find such a step-size is discussed in convergence analysis later.

 \begin{algorithm}[!t]
 	\caption{Randomized subspace regularized Newton method (RS-RNM)}
 	\label{alg:RSN}
 	\begin{algorithmic}[1]    
 		\renewcommand{\algorithmicrequire}{\textbf{input:}}
 		\REQUIRE $x_0\in\R^n$, $\gamma\ge0, c_1>1, c_2>0, \alpha, \beta\in (0,1)$
 		\STATE{$k \leftarrow 0$}
 		\REPEAT
 		\STATE{sample a random matrix: $P_k \sim \mathcal D$}
 		\STATE{compute the regularized sketched Hessian: $M_k = P_k H_k P_k^\T + c_1\Lambda_k I_s + c_2\norm{g_k}^\gamma I_s$, where  $\Lambda_k = \max(0,-\lambda_{\min}(P_k H_k P_k^\T))$}
 		\STATE{compute the search direction: $d_k = - P_k^\T M_k^{-1} P_k g_k$}
 		\STATE{apply the backtracking line search with Armijo's rule by finding the smallest integer $l_k \ge 0$ such that
 			\eqref{eq:Armijo_rule} holds. Set $t_k = \beta^{l_k}$, $x_{k+1}=x_k + t_k d_k$ and $k \leftarrow k+1$  }
 		\UNTIL{some stopping criteria is satisfied}
 		\RETURN the last iterate $x_k$
 	\end{algorithmic}
 \end{algorithm}

\section{Global convergence properties} \label{sec:RSRNM_global}
In \Cref{sec:RSRNM_global_convergence}, we discuss the global convergence of the RS-RNM under  \Cref{assumption:RS-RNM_global}. We further prove the global iteration complexity of the algorithm in \Cref{sec:RSRNM_global_complexity} by considering further assumptions.

\begin{assumption}\label{assumption:RS-RNM_global}
  The level set of $f$ at the initial point $x_0$  is {bounded}, i.e.,
  $\Omega := \{x \in \R^n: f(x) \leq f(x_0)\}$ is {bounded}.
\end{assumption}

By \cref{eq:Armijo_rule}, we have that for any $k\in \mathbb{N}$, $f(x_{k+1})\le f(x_k)$, implying all $x_{k} \in \Omega$.
Since $\Omega$ is a bounded set and $f$ is continuously differentiable, there exists $U_g>0$ such that 
    \begin{equation}\label{eq:upper_bound_g_k}
      \norm{g_k} \le U_g, \ \forall k \ge 0.
    \end{equation} 
Similarly, there exists $L>0$ such that for all $x\in \Omega$,  
\begin{equation}\label{eq:lip}
	\|\nabla^2f(x)\|\le L.
\end{equation}
In particular, for all $k>0$,
\begin{equation}\label{eq:lip1}
	\|H_k\|\le L.
\end{equation}
Notice that by \cref{eq:lip}, $\nabla f$ is $L$-Lipschitz continuous. We also define $f^* = \inf_{x\in \Omega} f(x)$.


\subsection{Global convergence}\label{sec:RSRNM_global_convergence}
We first show that the norm of $d_k$ can be bounded from above.
\begin{lemma}\label{lem:upper_bound_of_d_k_1}
  Suppose that $\norm{d_k}\neq 0$. Then, $d_k$ defined by \cref{eq:direction} satisfies
  \begin{equation*}
    \norm{d_k} \le \bar{\mathcal{C}}\frac{n}{s}\frac{\norm{g_k}^{1-\gamma}}{c_2},
  \end{equation*}
with probability at least $1-2e^{-s}.$
\end{lemma}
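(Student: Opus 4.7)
The plan is to bound each factor of $d_k = -P_k^\T M_k^{-1} P_k g_k$ separately and then combine via submultiplicativity of the operator norm. The main ingredient is a lower bound on the eigenvalues of $M_k$, which comes directly from its construction as a regularized symmetric matrix.

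First I would observe that, by definition, $\Lambda_k = \max(0, -\lambda_{\min}(P_k H_k P_k^\T))$ implies $P_k H_k P_k^\T + \Lambda_k I_s \succeq 0$. Hence
\begin{equation*}
    M_k = (P_k H_k P_k^\T + \Lambda_k I_s) + (c_1 - 1)\Lambda_k I_s + c_2 \|g_k\|^\gamma I_s \succeq c_2 \|g_k\|^\gamma I_s,
\end{equation*}
where we used $c_1 > 1$ and $\Lambda_k \ge 0$. Since $M_k$ is symmetric and positive definite, it follows that
\begin{equation*}
    \|M_k^{-1}\| \le \frac{1}{c_2 \|g_k\|^\gamma}.
\end{equation*}
Note this is well-defined since $\|d_k\| \neq 0$ forces $g_k \neq 0$.

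Next I would apply the submultiplicativity of the operator norm to $d_k = -P_k^\T M_k^{-1} P_k g_k$, giving
\begin{equation*}
    \|d_k\| \le \|P_k^\T\| \cdot \|M_k^{-1}\| \cdot \|P_k\| \cdot \|g_k\| = \|P_k\|^2 \cdot \|M_k^{-1}\| \cdot \|g_k\|.
\end{equation*}
Invoking \cref{assumption:concentration_of_PP} to bound $\|P_k\|^2 \le \mathcal{C}\, n/s$ and combining with the previous bound on $\|M_k^{-1}\|$ yields
\begin{equation*}
    \|d_k\| \le \mathcal{C}\frac{n}{s} \cdot \frac{1}{c_2 \|g_k\|^\gamma} \cdot \|g_k\| = \mathcal{C}\frac{n}{s}\frac{\|g_k\|^{1-\gamma}}{c_2},
\end{equation*}
which is the desired conclusion. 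There is no real obstacle here: the proof is a direct chain of estimates, with the only conceptual point being that the choice $c_1 > 1$ guarantees that the shift $c_1 \Lambda_k I_s$ dominates the possible negative eigenvalues of $P_k H_k P_k^\T$, leaving $c_2\|g_k\|^\gamma$ as a clean lower bound on the spectrum of $M_k$.
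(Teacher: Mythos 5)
Your proof is correct and follows essentially the same route as the paper: bound $\|d_k\|$ by submultiplicativity as $\|P_k\|^2\|M_k^{-1}\|\|g_k\|$, use \cref{assumption:concentration_of_PP} for $\|P_k\|^2\le\mathcal{C}n/s$, and lower-bound $\lambda_{\min}(M_k)$ by $c_2\|g_k\|^\gamma$ via the definition of $\Lambda_k$. The only difference is that you spell out the spectral lower bound on $M_k$ slightly more explicitly than the paper does.
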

\proof{Proof.}
  By \Cref{lem:concentration_of_PP} we have $\norm{P_k^\T P_k} \le \bar{\mathcal{C}}\frac{n}{s}$, holds with probability at least $1-2e^{-s}$. Then, it follows from \cref{eq:direction} that
  \begin{align*}
    \norm{d_k} &= \norm{P_k^\T M_k^{-1} P_k g_k}\\
    &= \norm{P_k^\T (P_k H_k P_k^\T + 
      \eta_k I_s)^{-1} P_k g_k}\\ 
    &\le \norm{P_k^\T (P_k H_k P_k^\T + 
      \eta_k I_s)^{-1} P_k}\norm{g_k}\\
    &\le \norm{P_k^\T} \norm{ P_k}\norm{(P_k H_k P_k^\T + 
      \eta_k I_s)^{-1}} \norm{g_k}\\
    &= \frac{\norm{P_k^\T P_k}\norm{g_k}}{\lambda_{\min}(P_k H_k P_k^\T + c_1\Lambda_k I_s + c_2\norm{g_k}^\gamma I_s)} \quad (\mbox{as $\norm{P_k^\T} \norm{ P_k}=\norm{P_k^\T P_k}$})\\
               &\le \bar{\mathcal{C}}\frac{n}{s} \frac{\norm{g_k}^{1-\gamma}}{c_2}.
  \end{align*}
\endproof

We next show that, when $\norm{g_k}$ is at least $\varepsilon$ away from $0$, $\norm{d_k}$ is bounded above by some constant.
\begin{lemma}\label{lem:upper_bound_of_d_k_2}
  Suppose that \Cref{assumption:RS-RNM_global} holds. Suppose also that there exists $\eps>0$ such that $\norm{g_k}\ge \eps$.
  Then, with probability at least $1-2e^{-s}$, $d_k$ defined by \cref{eq:direction} satisfies
  \begin{equation} \label{eq:upper_bound_d_k_2}
    \norm{d_k} \le r(\eps),
  \end{equation}
  where 
  \begin{equation*}
    r(\eps) := \frac{\bar{\mathcal{C}}n}{c_2 s} \max\left(U_g^{1-\gamma}, \frac{1}{\eps^{\gamma-1}}\right).
  \end{equation*}
\end{lemma}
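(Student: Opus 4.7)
The plan is to invoke the previous Lemma (the bound $\|d_k\| \le \mathcal{C}\frac{n}{s}\frac{\|g_k\|^{1-\gamma}}{c_2}$) and then carry out a two-case analysis on the exponent $1-\gamma$ in order to bound $\|g_k\|^{1-\gamma}$ by $\max(U_g^{1-\gamma}, \eps^{1-\gamma})$. The only content of the statement is a clean way to control the $\|g_k\|^{1-\gamma}$ factor uniformly once we know that $\|g_k\|$ lies in the interval $[\eps, U_g]$; note that the upper bound $U_g$ comes from Assumption~\ref{assumption:RS-RNM_global} via \eqref{eq:upper_bound_g_k}, while the lower bound $\eps$ is the hypothesis.

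The key steps, in order, would be:
\textbf{Step 1.} Apply Lemma~\ref{lem:upper_bound_of_d_k_1} directly (both of its hypotheses are in force: Assumption~\ref{assumption:concentration_of_PP} holds, and $\|d_k\|\neq 0$ may be assumed without loss of generality since otherwise the conclusion $\|d_k\|\le r(\eps)$ is trivial) to obtain
\[
\|d_k\| \;\le\; \mathcal{C}\frac{n}{s}\frac{\|g_k\|^{1-\gamma}}{c_2}.
\]
\textbf{Step 2.} Split on the sign of $1-\gamma$. If $\gamma \le 1$, then $x\mapsto x^{1-\gamma}$ is non-decreasing on $(0,\infty)$, so $\|g_k\|^{1-\gamma}\le U_g^{1-\gamma}$ by \eqref{eq:upper_bound_g_k}. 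If $\gamma > 1$, then $x\mapsto x^{1-\gamma}$ is decreasing, so the hypothesis $\|g_k\|\ge \eps$ gives $\|g_k\|^{1-\gamma}\le \eps^{1-\gamma}=1/\eps^{\gamma-1}$. In either case $\|g_k\|^{1-\gamma}\le \max\bigl(U_g^{1-\gamma},\,1/\eps^{\gamma-1}\bigr)$.
\textbf{Step 3.} Combine Steps~1 and~2 to obtain $\|d_k\|\le \frac{\mathcal{C}n}{c_2 s}\max\bigl(U_g^{1-\gamma},\,1/\eps^{\gamma-1}\bigr)=r(\eps)$.

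There is no real obstacle here; this is a short bookkeeping lemma whose sole purpose is to absorb the $\|g_k\|^{1-\gamma}$ factor into a constant depending only on $\eps$ (and on $U_g$, $n$, $s$, $c_2$, $\gamma$, $\mathcal{C}$), so that later convergence arguments can treat $\|d_k\|$ as uniformly bounded on the region $\{\|g_k\|\ge\eps\}$. The only subtlety worth flagging is that the formulation with $\max$ is needed precisely because the sign of $1-\gamma$ is not fixed by the standing hypothesis $\gamma\ge 0$; a single-branch bound would not work for all admissible $\gamma$.
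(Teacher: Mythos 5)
Your proposal is correct and follows essentially the same route as the paper: invoke Lemma~\ref{lem:upper_bound_of_d_k_1} and then split on whether $\gamma\le 1$ or $\gamma>1$, using \eqref{eq:upper_bound_g_k} in the first case and $\norm{g_k}\ge\eps$ in the second. Your explicit handling of the degenerate case $\norm{d_k}=0$ is a small point of extra care that the paper leaves implicit, but it does not change the argument.
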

\proof{Proof.}
  If $\gamma \le 1$, it follows from \Cref{lem:upper_bound_of_d_k_1} and \cref{eq:upper_bound_g_k} that
  \begin{equation*}
    \norm{d_k} \le \frac{\bar{\mathcal{C}}n}{s} \frac{U_g^{1-\gamma}}{c_2}.
  \end{equation*}
  Meanwhile, if $\gamma>1$, it follows from \Cref{lem:upper_bound_of_d_k_1} and $\norm{g_k}\ge \eps$ that 
  \begin{equation*}
    \norm{d_k} \le \frac{\bar{\mathcal{C}}n}{s} \frac{1}{c_2\eps^{\gamma-1}}.
  \end{equation*}
This completes the proof.
\endproof

When $\norm{g_k}\ge \eps$, we have from \Cref{lem:upper_bound_of_d_k_2} that 
\begin{equation*}
  x_k + \tau d_k \in \Omega + B(0, r(\eps)), \ \forall \tau\in[0,1].
\end{equation*}
By boundedness of $\Omega + B(0, r(\eps))$ and by using the fact that $f$ is twice continuously differentiable, we deduce that there exists $U_H(\eps)>0$ such that
\begin{equation}\label{eq:upper_bound_of_hessian}
  \norm{\nabla^2 f(x)} \le U_H(\eps), \ \forall x \in \Omega + B(0,r(\eps)).
\end{equation}

The following lemma indicates that a step size smaller than some constant satisfies Armijo's rule when $\norm{g_k}\ge \eps$.

\begin{lemma}\label{lem:lower_bound_step_size}
  Suppose that \Cref{assumption:RS-RNM_global} holds. Suppose also that there exists $\eps>0$ such that $\norm{g_k}\ge \eps$.
  Then, with probability at least $1-2e^{-s}$, a step size $t_k'>0$ such that 
  \begin{equation*}
    t_k' \le \frac{2(1-\alpha)c_2^2 \eps^{2\gamma}s}{((1+c_1) \frac{\bar{\mathcal{C}}n}{s} U_H(\eps) + c_2U_g^\gamma) U_H(\eps)\bar{\mathcal{C}}n}
  \end{equation*}
  satisfies Armijo's rule, i.e., 
  \begin{equation*}
    f(x_k) - f(x_k + t_k' d_k) \ge -\alpha t_k' g_k^\T d_k.
  \end{equation*}
\end{lemma}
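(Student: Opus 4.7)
The plan is to combine a second-order Taylor expansion of $f$ along the ray $x_k + t d_k$ with spectral bounds on the regularized sketched Hessian $M_k$, turning Armijo's rule into an explicit sufficient upper bound on the step size.

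\textbf{Step 1 (Taylor reduction).} First I would write, for some $\xi_k$ on the segment from $x_k$ to $x_k + t_k' d_k$,
$$f(x_k + t_k' d_k) = f(x_k) + t_k' g_k^\T d_k + \frac{(t_k')^2}{2} d_k^\T \nabla^2 f(\xi_k) d_k.$$
By \cref{lem:upper_bound_of_d_k_2} we have $\|d_k\| \le r(\eps)$, so for $t_k' \in (0,1]$ the point $\xi_k$ lies in $\Omega + B(0,r(\eps))$, and \eqref{eq:upper_bound_of_hessian} yields $d_k^\T \nabla^2 f(\xi_k) d_k \le U_H(\eps)\|d_k\|^2$ (if this quantity is non-positive, Armijo's rule holds trivially). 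Substituting into the Taylor identity, Armijo's rule is implied by
$$t_k' \;\le\; \frac{2(1-\alpha)(-g_k^\T d_k)}{U_H(\eps)\,\|d_k\|^2}.$$
Hence the whole job reduces to lower-bounding the ratio $(-g_k^\T d_k)/\|d_k\|^2$.

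\textbf{Step 2 (Ratio bound via the spectrum of $M_k$).} Since $M_k \succ 0$ by the definition of $\Lambda_k$ and $c_1>1$, I would lower-bound
$$-g_k^\T d_k \;=\; g_k^\T P_k^\T M_k^{-1} P_k g_k \;\ge\; \frac{\|P_k g_k\|^2}{\lambda_{\max}(M_k)},$$
while, invoking \cref{assumption:concentration_of_PP},
$$\|d_k\|^2 \;\le\; \|P_k^\T P_k\|\,\|M_k^{-1} P_k g_k\|^2 \;\le\; \frac{\mathcal{C} n}{s}\cdot \frac{\|P_k g_k\|^2}{\lambda_{\min}(M_k)^2}.$$
The random factor $\|P_k g_k\|^2$ cancels, leaving the deterministic bound
$$\frac{-g_k^\T d_k}{\|d_k\|^2} \;\ge\; \frac{s\,\lambda_{\min}(M_k)^2}{\mathcal{C} n\,\lambda_{\max}(M_k)}.$$

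\textbf{Step 3 (Eigenvalue estimates).} The regularization gives the cheap lower bound $\lambda_{\min}(M_k) \ge c_2\|g_k\|^\gamma \ge c_2\eps^\gamma$ (because $c_1>1$ and $\Lambda_k \ge 0$). For the largest eigenvalue, I would use $\Lambda_k \le \|P_kH_kP_k^\T\| \le \|P_k\|^2\|H_k\|$, combine it with \cref{assumption:concentration_of_PP}, the Hessian bound $\|H_k\|\le U_H(\eps)$ (since $x_k \in \Omega$), and \eqref{eq:upper_bound_g_k}, to obtain
$$\lambda_{\max}(M_k) \;\le\; (1+c_1)\,\frac{\mathcal{C} n}{s}\,U_H(\eps) + c_2 U_g^\gamma.$$
Substituting both bounds into the ratio estimate and then into the Armijo sufficient condition of Step~1 yields exactly the upper bound on $t_k'$ stated in the lemma.

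The main obstacle is noticing the cancellation of the random quantity $\|P_k g_k\|^2$ between the estimate for $-g_k^\T d_k$ and that for $\|d_k\|^2$; without this cancellation the sufficient condition would depend on a random quantity that cannot be controlled uniformly. Everything else—Taylor's theorem, using the compactness of $\Omega + B(0,r(\eps))$ to bound the Hessian, and exploiting $c_1>1$ to control $\lambda_{\min}(M_k)$—is routine.
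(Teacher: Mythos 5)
Your proposal is correct and follows essentially the same route as the paper: the same Taylor expansion, the same cancellation of $\|P_kg_k\|^2$ between the linear and quadratic terms, and the same spectral bounds $\lambda_{\min}(M_k)\ge c_2\eps^\gamma$ and $\lambda_{\max}(M_k)\le(1+c_1)\frac{\mathcal{C}n}{s}U_H(\eps)+c_2U_g^\gamma$. The only cosmetic difference is that you package the quadratic term as $U_H(\eps)\|d_k\|^2$ and form the ratio $(-g_k^\T d_k)/\|d_k\|^2$, whereas the paper bounds $\lambda_{\max}(M_k^{-1}P_kP_k^\T M_k^{-1})$ directly — these are the same estimate.
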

\proof{Proof.}
  From Taylor's theorem, there exists $\tau_k' \in (0,1)$ such that
  \begin{equation*}
    f(x_k+t_k'd_k) = f(x_k) + t_k'g_k^\T d_k + \frac 12 {t_k'}^2 d_k^\T \nabla^2 f(x_k+\tau_k' t_k 'd_k)d_k.
  \end{equation*}
  Then, we have
 
 \begin{align}
	&f(x_k) - f(x_k+t_k'd_k) + \alpha t_k' g_k^\T d_k \nonumber \\
		=&(\alpha-1)t_k' g_k^\T d_k - \frac 12 {t_k'}^2 d_k^\T \nabla^2 f(x_k+\tau_k' t_k' d_k)d_k \nonumber\\
			=&(1-\alpha)t_k' g_k^\T P_k^\T M_k^{-1} P_k g_k - \frac 12 {t_k'}^2 g_k^\T P_k^\T M_k^{-1} P_k \nabla^2 f(x_k+\tau_k' t_k'd_k) P_k^\T M_k^{-1} P_kg_k \label{eq:armijo_by_taylor}\\
				& \hspace{29em} \text{(by \eqref{eq:direction})} \nonumber\\
			\ge& (1-\alpha)t_k' \lambda_{\min}(M_k^{-1}) \norm{P_k g_k}^2 \nonumber\\
			& \phantom{(1-\alpha)t_k' } - \frac 12 {t_k'}^2\lambda_{\max}(\nabla^2 f(x_k+\tau_k' t_k' d_k)) \lambda_{\max} (M_k^{-1} P_k  P_k^\T M_k^{-1}) \norm{P_kg_k}^2 \nonumber\\
			\ge& (1-\alpha)t_k' \lambda_{\min}(M_k^{-1}) \norm{P_k g_k}^2 - \frac 12 {t_k'}^2U_H(\eps) \lambda_{\max} (M_k^{-1} P_k  P_k^\T M_k^{-1}) \norm{P_kg_k}^2, \nonumber\\
			&\hspace{29em} \text{(by \eqref{eq:upper_bound_of_hessian})} \nonumber
	\end{align}  
	
	where the first inequality derives from the fact that
	\begin{align*}
		&g_k^\T P_k^\T M_k^{-1} P_k \nabla^2 f(x_k+\tau_k' t_k'd_k) P_k^\T M_k^{-1} P_kg_k \le \lambda_{\max}( M_k^{-1} P_k \nabla^2 f(x_k+\tau_k' t_k'd_k) P_k^\T M_k^{-1})\|P_kg_k\|^2 \\
		&\le \lambda_{\max}(\nabla^2 f(x_k+\tau_k' t_k' d_k)) \lambda_{\max} (M_k^{-1} P_k  P_k^\T M_k^{-1}) \norm{P_kg_k}^2.
	\end{align*}
	By \Cref{lem:concentration_of_PP}, we have that, with probability at least $1-2e^{-s}$, $\norm{P_k P_k^\T} \le \frac{\bar{\mathcal{C}}n}{s}$. 
In addition, we have $\norm{H_k} \le U_H(\eps)$ from \cref{eq:upper_bound_of_hessian}, which gives us $\norm{P_k H_k P_k^\T} \le \frac{\bar{\mathcal{C}}n}{s} U_H(\eps)$. 
 For these reasons, we obtain evaluation of the values of $\lambda_{\min}(M_k^{-1})$ and $\lambda_{\max}(M_k^{-1}P_kP_k^\T M_k^{-1})$:
  \begin{align}
    \lambda_{\min}(M_k^{-1}) &= \frac{1}{\lambda_{\max}(M_k)} \nonumber\\
                             &= \frac{1}{\lambda_{\max}(P_k H_k P_k^\T + c_1\Lambda_k I_s + c_2\norm{g_k}^\gamma I_s)}\nonumber\\
                           &\ge \frac{1}{\frac{\bar{\mathcal{C}}n}{s} U_H(\eps) + c_1 \frac{\bar{\mathcal{C}}n}{s} U_H(\eps) + c_2\norm{g_k}^\gamma}, \label{eq:lower_bound_M_inv}\\[0.7em]
    \lambda_{\max}(M_k^{-1}P_kP_k^\T M_k^{-1}) &\le \norm{P_kP_k^\T}\lambda_{\max}(M_k^{-1})^2  \nonumber \\
                           &\le \frac{\bar{\mathcal{C}}n}{s}\frac{1}{\lambda_{\min}(P_k H_k P_k^\T + c_1\Lambda_k I_s + c_2\norm{g_k}^\gamma I_s)^2} \nonumber\\
                           &\le \frac{\bar{\mathcal{C}}n}{s} \frac{1}{c_2^2 \norm{g_k}^{2\gamma}},\nonumber
  \end{align}
  so that we have
  \begin{align*}
    &f(x_k) - f(x_k+t_k'd_k) + \alpha t_k' g_k^\T d_k\\
    \ge& \frac{(1-\alpha)t_k'}{\frac{\bar{\mathcal{C}}n}{s} U_H(\eps) + c_1 \frac{\bar{\mathcal{C}}n}{s} U_H(\eps) + c_2\norm{g_k}^\gamma} \norm{P_k g_k}^2 - \frac 12 {t_k'}^2 \frac{\bar{\mathcal{C}}n}{s} \frac{U_H(\eps)}{c_2^2 \norm{g_k}^{2\gamma}} \norm{P_kg_k}^2\\
    \ge& \frac{(1-\alpha)t_k'}{\frac{\bar{\mathcal{C}}n}{s} U_H(\eps) + c_1 \frac{\bar{\mathcal{C}}n}{s} U_H(\eps) + c_2U_g^\gamma} \norm{P_k g_k}^2 - \frac 12 {t_k'}^2 \frac{\bar{\mathcal{C}}n}{s} \frac{U_H(\eps)}{c_2^2 \eps^{2\gamma}} \norm{P_kg_k}^2\\
    & \hspace{23em} \text{(by \eqref{eq:upper_bound_g_k} and $\norm{g_k}\ge \eps$)}\\
    =& \frac{\bar{\mathcal{C}}U_H(\eps) n }{2c_2^2 \eps^{2\gamma}s}t_k' \left( \frac{2(1-\alpha)c_2^2 \eps^{2\gamma}s}{((1+c_1) \frac{\bar{\mathcal{C}}n}{s} U_H(\eps) + c_2U_g^\gamma) U_H(\eps)\bar{\mathcal{C}}n} - t_k'\right) \norm{P_k g_k}^2\\
    \ge& 0,
  \end{align*}
  which completes the proof.
\endproof

As a consequence of this lemma, it turns out that the step size $t_k$ used in RS-RNM can be bounded from below by some constant.
\begin{corollary}\label{cor:lower_bound_t_k}
  Suppose that \Cref{assumption:RS-RNM_global} holds. Suppose also that there exists $\eps>0$ such that $\norm{g_k}\ge \eps$.
  Then, with probability at least $1-2e^{-s}$, the step size $t_k$ chosen in Line~6 of RS-RNM satisfies
  \begin{equation} \label{eq:lower_bound_t_k}
    t_k \ge t_{\min} (\eps),
  \end{equation}
  where
  \begin{equation*}
    t_{\min} (\eps) = \min\left(1, \frac{2(1-\alpha)\beta c_2^2 \eps^{2\gamma}s}{((1+c_1) \frac{\bar{\mathcal{C}}n}{s} U_H(\eps) + c_2U_g^\gamma) U_H(\eps)\bar{\mathcal{C}}n}\right).
  \end{equation*}
\end{corollary}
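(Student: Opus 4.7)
The plan is to combine \cref{lem:lower_bound_step_size} with the mechanics of the Armijo backtracking line search used in Line~6 of \cref{alg:RSN}. The line search tests step sizes $1,\beta,\beta^2,\dots$ and accepts the first $t_k=\beta^{l_k}$ satisfying \eqref{eq:Armijo_rule}; the key observation is that, by \cref{lem:lower_bound_step_size}, any step size below an explicit threshold is automatically accepted, so the backtracking procedure cannot produce a step size that is too small.

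Concretely, I would define
\[
T(\eps) \;:=\; \frac{2(1-\alpha)c_2^2 \eps^{2\gamma}s}{\bigl((1+c_1)\tfrac{\mathcal{C}n}{s} U_H(\eps) + c_2U_g^\gamma\bigr) U_H(\eps)\mathcal{C}n},
\]
so that \cref{lem:lower_bound_step_size} asserts that every $t' \in (0, T(\eps)]$ satisfies Armijo's rule (the proof of that lemma actually shows this for every sufficiently small $t'$, and the quoted value is the threshold). Then I would split into two cases. If $l_k = 0$, i.e.\ the initial trial $t_k = 1$ is accepted, then trivially $t_k = 1 \ge \min(1,\beta T(\eps)) = t_{\min}(\eps)$. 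If $l_k \ge 1$, then the previous trial $t_k/\beta = \beta^{l_k-1}$ was rejected, meaning it did \emph{not} satisfy \eqref{eq:Armijo_rule}. By the contrapositive of \cref{lem:lower_bound_step_size}, this forces $t_k/\beta > T(\eps)$, and hence $t_k > \beta T(\eps) \ge t_{\min}(\eps)$.

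Combining the two cases yields $t_k \ge t_{\min}(\eps)$, which is exactly \eqref{eq:lower_bound_t_k}. There is no real obstacle here beyond bookkeeping; the substantive work was already done in \cref{lem:lower_bound_step_size}, and the corollary simply translates that bound on an individual sufficient step size into a uniform lower bound on the step size actually produced by backtracking, losing only a factor of $\beta$ (and capping at $1$, since the initial trial value is $\beta^0 = 1$).
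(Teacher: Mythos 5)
Your proposal is correct and follows essentially the same route as the paper: both reduce the corollary to \cref{lem:lower_bound_step_size} plus the observation that backtracking loses at most a factor of $\beta$ relative to the guaranteed threshold. Your phrasing via the contrapositive (the rejected trial $\beta^{l_k-1}$ must exceed the threshold) is a slightly cleaner bookkeeping of the same argument the paper makes by locating the threshold between consecutive powers of $\beta$.
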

\proof{Proof.}
  If
  \begin{equation*}
    \frac{2(1-\alpha)c_2^2 \eps^{2\gamma}s}{((1+c_1) \frac{\bar{\mathcal{C}}n}{s} U_H(\eps) + c_2U_g^\gamma) U_H(\eps)\bar{\mathcal{C}}n} > 1,
  \end{equation*}
  we know that $t_k=1$ satisfies Armijo's rule \cref{eq:Armijo_rule} from \Cref{lem:lower_bound_step_size}. If not, there exists $l_k\in\{0,1,2,\ldots\}$ such that 
  \begin{equation*}
    \beta^{l_k + 1} < \frac{2(1-\alpha)c_2^2 \eps^{2\gamma}s}{((1+c_1) \frac{\bar{\mathcal{C}}n}{s} U_H(\eps) + c_2U_g^\gamma) U_H(\eps)\bar{\mathcal{C}}n} \le \beta^{l_k}, 
  \end{equation*}
  and by \Cref{lem:lower_bound_step_size}, we have that the step size $\beta^{l_k + 1}$ satisfies Armijo's rule \cref{eq:Armijo_rule}.
  Then, from the definition of $\beta^{l_k}$ in Line~6 of RS-RNM, we have 
  \begin{equation*}
    t_k = \beta^{l_k} \ge \beta^{l_k + 1} = \beta\cdot \beta^{l_k} \ge \frac{2(1-\alpha)\beta c_2^2 \eps^{2\gamma}s}{((1+c_1) \frac{\bar{\mathcal{C}}n}{s} U_H(\eps) + c_2U_g^\gamma) U_H(\eps)\bar{\mathcal{C}}n}.
  \end{equation*}
  This completes the proof.
\endproof

Using Corollary \ref{cor:lower_bound_t_k}, we can show the global convergence of RS-RNM under \Cref{assumption:RS-RNM_global}.
\begin{theorem}\label{thm:global_convergence}
  Suppose that \Cref{assumption:RS-RNM_global} holds. Let $\delta\in (0,1)$ and define $\delta_s:=2\left(\exp(-\frac{\C_0}{4}s)+ \exp(-s)\right)$ and 
  \begin{equation*}
    m=\left\lfloor \frac{f(x_0)-f^*}{(1-\delta )(1-\delta_s)p(\eps)\eps^2}\right\rfloor +1,
    \quad\text{where}\quad
    p(\eps) = \frac{\alpha t_{\min}(\eps)}{2\bar{\mathcal{C}}(1+c_1) \frac{n}{s} U_H(\eps) + 2c_2 U_g^\gamma}. 
  \end{equation*}
  Then, with probability at least $1-\exp\left(-\frac{\delta^2}{2}(1-\delta_s)m\right)$ there exists 
  $k\in\{0,1,\dots,m-1\}$ such that $\norm{g_k} < \eps$. 
\end{theorem}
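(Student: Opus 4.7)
The plan is to argue by contradiction: assume that $\norm{g_k} \ge \eps$ holds for every $k \in \{0, 1, \dots, m-1\}$, then derive a uniform per-iteration decrease of $f$ that contradicts $f(x_0) - f^* < \infty$ once we have summed over the $m$ iterations. The Armijo rule \cref{eq:Armijo_rule} gives $f(x_k) - f(x_{k+1}) \ge -\alpha t_k g_k^\T d_k$, so the main task is to lower-bound the quantity $-g_k^\T d_k = g_k^\T P_k^\T M_k^{-1} P_k g_k$ uniformly and then invoke \cref{cor:lower_bound_t_k} to control $t_k$.

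First I would expand $-g_k^\T d_k \ge \lambda_{\min}(M_k^{-1}) \norm{P_k g_k}^2$, and use the bound \cref{eq:lower_bound_M_inv} derived in the proof of \cref{lem:lower_bound_step_size} to obtain
\begin{equation*}
  \lambda_{\min}(M_k^{-1}) \ge \frac{1}{\mathcal{C}(1+c_1)\frac{n}{s} U_H(\eps) + c_2 U_g^\gamma},
\end{equation*}
provided \cref{assumption:concentration_of_PP} holds. Next I would bound $\norm{P_k g_k}^2$ from below by the Johnson--Lindenstrauss concentration of \cref{lem:JLL_random_matrix} applied with $\eps = 1/2$ to the vector $g_k$, which yields $\norm{P_k g_k}^2 \ge \tfrac{1}{2}\norm{g_k}^2 \ge \tfrac{1}{2}\eps^2$ with probability at least $1 - 2\exp(-\mathcal{C}_0 s/4)$. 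Combining this with \cref{cor:lower_bound_t_k}, which requires \cref{assumption:concentration_of_PP} (holding with probability $\ge 1 - 2e^{-s}$ by \cref{lem:concentration_of_PP}), yields
\begin{equation*}
  f(x_k) - f(x_{k+1}) \ge \frac{\alpha t_{\min}(\eps)}{2\mathcal{C}(1+c_1)\frac{n}{s}U_H(\eps) + 2c_2 U_g^\gamma}\,\eps^2 = p(\eps)\eps^2
\end{equation*}
at iteration $k$, with probability at least $1 - 2\bigl(\exp(-\mathcal{C}_0 s/4) + e^{-s}\bigr)$.

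To finish, I would take a union bound over the $m$ iterations: the event that both the JL-type lower bound on $\norm{P_k g_k}^2$ and the norm bound on $P_k P_k^\T$ hold simultaneously for every $k \in \{0,\dots,m-1\}$ has probability at least $1 - 2m\bigl(\exp(-\mathcal{C}_0 s/4) + e^{-s}\bigr)$. On this event, summing the per-iteration decrease gives
\begin{equation*}
  f(x_0) - f^* \ge \sum_{k=0}^{m-1}\bigl(f(x_k) - f(x_{k+1})\bigr) \ge m\, p(\eps)\eps^2 > f(x_0) - f^*,
\end{equation*}
by the choice of $m$, which is the desired contradiction. Hence on the same event there must exist $k \in \{0,\dots,m-1\}$ with $\norm{g_k} < \eps$.

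The step I expect to need the most care is the bookkeeping of the two probabilistic events per iteration (the JL bound and the $\norm{P_k P_k^\T}$ bound) and their union bound, since Lemmas \ref{lem:upper_bound_of_d_k_1}--\ref{lem:lower_bound_step_size} and \cref{cor:lower_bound_t_k} were all stated conditionally on \cref{assumption:concentration_of_PP}; the theorem removes this assumption by absorbing its failure probability into the final $1 - 2m(\exp(-\mathcal{C}_0 s/4) + e^{-s})$ bound. Everything else is a routine contradiction argument combined with the already-established deterministic lemmas.
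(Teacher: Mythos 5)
Your proposal is correct and follows essentially the same route as the paper's own proof: contradiction from the assumption $\norm{g_k}\ge\eps$ for all $k<m$, the Armijo decrease combined with the bound \cref{eq:lower_bound_M_inv} on $\lambda_{\min}(M_k^{-1})$, the Johnson--Lindenstrauss bound $\norm{P_kg_k}^2\ge\tfrac12\norm{g_k}^2$, the step-size bound from \cref{cor:lower_bound_t_k}, and a union bound over the $m$ iterations. The probability bookkeeping you flag as delicate is handled in the paper exactly as you describe, by conditioning on the joint event that both concentration bounds hold for all $k\in\{0,\dots,m-1\}$.
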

\proof{Proof.}
  We first notice that, by \Cref{lem:JLL_random_matrix}, applied with $\varepsilon=1/2$, and \Cref{lem:concentration_of_PP}, 
  we have, using \cref{eq:unionbound}, that $\norm{P_kg_k}^2\ge \frac{1}{2}\norm{g_k}^2$ and $\|P_kP_k^\top\|\le \bar{\mathcal{C}}\frac{n}{s}$ holds for all $k\in\{0,1,\ldots,m-1\}$ with the given probability. 

  Suppose, for the sake of contradiction, that $\norm{g_k} \ge \eps$ for all $k\in\{0,1,\dots,m-1\}$.
  From Armijo's rule \cref{eq:Armijo_rule}, we can estimate how much the function value decreases in one iteration. We have that with probability  $1- 2\left(\exp(-\frac{\C_0}{4}s)+ \exp(-s)\right)$:
  \begin{align*}
 	f(x_k) - f(x_{k+1}) &\ge -\alpha t_k g_k^\T d_k\\
 	&=   \alpha t_k g_k^\T P_k^\T M_k^{-1} P_k g_k\\
 	&\ge \alpha t_k \lambda_{\min(M_k^{-1})} \norm{P_k g_k}^2\\
 	&\ge \frac{\alpha t_{\min}(\eps)}{2(1+c_1) \frac{\bar{\mathcal{C}}n}{s} U_H(\eps) + 2c_2\norm{g_k}^\gamma}\norm{g_k}^2 \\
 	&\hspace{10em} \text{(by $\norm{P_kg_k}^2\ge \tfrac{1}{2}\norm{g_k}^2$ )}\\
 	&\ge p(\eps)\eps^2. \hspace{5.75em}  \text{(by \eqref{eq:upper_bound_g_k} and $\norm{g_k}\ge \eps$)}
 \end{align*} 
  Let us denote by $\mathcal{A}_k$ the event, only depending of $P_k$, where the above inequality holds. Conditionally to the complement of $\mathcal{A}_k$ we have only that $f(x_k) - f(x_{k+1})\ge 0$.
  Let us denote by $T_k\in \{0,1\}$ the random variable equal to $1$ if and only if $\mathcal{A}_k$ holds.
  Notice that the random variables $\{T_k\}$ are mutually independent because $T_k$ depends only on $P_k$. By the above remark we have that for all $k$: $f(x_k) - f(x_{k+1}) \ge p(\eps)\eps^2T_k$. Hence
  by adding up all these inequalities from $k=0$ to $k=m-1$, we get
  \begin{equation}
    f(x_0)-f(x_m) \ge p(\eps)\eps^2\sum\limits_{k=0}^{m-1}T_k.
    \label{eq:sumT_RHS}
  \end{equation}
  Since, for all $k$, $\mathbb{E}[T_k]\ge 1- 2\left(\exp(-\frac{\C_0}{4}s)+ \exp(-s)\right):=1-\delta_s $,
  we have by a Chernoff bound (see \cite{vershynin2018high}), that for all $\delta \in (0,1)$,
 \begin{equation}\label{eq:Chernoff_bound}
  \mathbb{P}\left(\sum\limits_{k=0}^{m-1}T_k \ge (1-\delta )(1-\delta_s)m\right) \ge 1-\exp\left(-\frac{\delta^2}{2}(1-\delta_s)m\right).
  \end{equation}
  Notice that by definition of $m$, we have that
  $$m >  \frac{f(x_0)-f^*}{(1-\delta )(1-\delta_s)p(\eps)\eps^2}.$$
  Hence
  \begin{equation}\label{eq:auxa}
  	(1-\delta )(1-\delta_s) p(\eps)\eps^2 m > f(x_0)-f^*.
  \end{equation}
  Thus, with probability at least  $1-\exp\left(-\frac{\delta^2}{2}(1-\delta_s)m\right)$
  \begin{align*}
    f(x_0)-f^* \ge& f(x_0)-f(x_m) \\
    \ge& (1-\delta )(1-\delta_s) mp(\eps)\eps^2 \\
    >  &f(x_0)-f^*,
  \end{align*}
  where the second inequality holds by \eqref{eq:sumT_RHS} together with \eqref{eq:Chernoff_bound} and the strict inequality holds by \cref{eq:auxa}.
  This is a contradiction, hence there exists  $k\in\{0,1,\dots,m-1\}$ such that $\norm{g_k} < \eps$.
\endproof

Because of the dependency of $ p(\eps) $ on $\eps$, the above discussion can not lead to the iteration complexity analysis, as we need to quantify the exact dependency of the
iteration complexity bound with respect to $\varepsilon$. This will be done, under a few additional assumptions, in the next subsection.

\subsection{Global iteration complexity}\label{sec:RSRNM_global_complexity}
We now estimate the global iteration complexity of the RS-RNM under \Cref{assumption:RS-RNM_global} and the following assumption.

\begin{assumption}\label{assumption:global_iteration_complexity}
  \ \\
  \vspace{-1em}
  \begin{enumerate}
    \renewcommand{\labelenumi}{(\roman{enumi})} 
    \item $\gamma \le 1/2$,
    \item $\alpha \le 1/2$,
    \item There exists $L_H>0$ such that 
    \begin{equation*}
      \norm{\nabla^2f(x)-\nabla^2f(y)} \le L_H \norm{x-y}, \quad \forall x, y \in \Omega + B(0, r_1),
    \end{equation*}
    where $r_1 := \dfrac{\bar{\mathcal{C}}U_g^{1-\gamma} n}{c_2 s}$. 
  \end{enumerate}
\end{assumption}

    From the definition of $r_1$ in $(iii)$, 
     \Cref{lem:upper_bound_of_d_k_1} and \cref{eq:upper_bound_g_k}, we have
\begin{equation*}
  \norm{d_k} \le \frac{\bar{\mathcal{C}}n}{s}\frac{\norm{g_k}^{1-\gamma}}{c_2} \le \frac{\bar{\mathcal{C}}n}{s}\frac{U_g^{1-\gamma}}{c_2} = r_1.
\end{equation*}
Note that unlike \cref{eq:upper_bound_d_k_2}, the bound has no dependency on $\eps$.
For this reason, we have 
\begin{equation*}
  x_k + \tau d_k \in \Omega + B(0, r_1), \ \forall \tau\in[0,1].
\end{equation*}
Moreover, since $\Omega + B(0, r_1)$ is bounded and $f$ is twice continuously differentiable, there exists $U_H>0$ such that
\begin{equation}\label{eq:upper_bound_of_hessian_2}
  \norm{\nabla^2 f(x)} \le U_H, \ \forall x \in \Omega + B(0,r_1).
\end{equation}

Similar to the result of \Cref{lem:lower_bound_step_size}, we can show that a step size smaller than some constant satisfies Armijo's rule and therefore, $t_k$ can be bounded from below by some constant.

\begin{lemma}\label{lem:lower_bound_stepsize_2}
Suppose that \Cref{assumption:RS-RNM_global} and \Cref{assumption:global_iteration_complexity} hold. 
Then,  with probability at least $1-2e^{-s}$, a step size $t_k'>0$ such that 
  \begin{equation*}
    t_k' \le \min\left(1,\frac{c_2^2s^2}{\bar{\mathcal{C}}^2L_H U_g^{1-2\gamma} n^2}\right),
  \end{equation*}
  satisfies Armijo's rule, i.e., 
  \begin{equation*}
    f(x_k) - f(x_k + t_k' d_k) \ge -\alpha t_k' g_k^\T d_k.
  \end{equation*}
\end{lemma}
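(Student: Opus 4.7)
The plan is to build on the analysis of \cref{lem:lower_bound_step_size} but now exploit the Lipschitz continuity of the Hessian supplied by \cref{assumption:global_iteration_complexity}(iii) to get a third-order rather than a second-order Taylor bound. Since \cref{lem:upper_bound_of_d_k_1} together with \cref{eq:upper_bound_g_k} gives $\|d_k\|\le r_1$, the entire segment $[x_k,\,x_k+t_k'd_k]$ lies inside $\Omega + B(0,r_1)$, so (iii) yields
\[
f(x_k+t_k'd_k) \;\le\; f(x_k) + t_k'\, g_k^\T d_k + \tfrac{(t_k')^2}{2}\, d_k^\T H_k d_k + \tfrac{L_H}{6}\,(t_k')^3\|d_k\|^3.
\]
From here I would aim to show that this right-hand side is at most $f(x_k)+\alpha t_k' g_k^\T d_k$; the key will be to produce an upper bound on $d_k^\T H_k d_k$ that contributes a \emph{negative} multiple of $\|d_k\|^2$ able to absorb the cubic remainder.

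The main technical step is the following identity. Writing $u_k = M_k^{-1}P_k g_k$, so that $d_k = -P_k^\T u_k$ and $-g_k^\T d_k = u_k^\T M_k u_k$, the definition of $M_k$ gives $P_k H_k P_k^\T = M_k - \eta_k I_s$ and hence
\[
d_k^\T H_k d_k \;=\; u_k^\T(M_k - \eta_k I_s)u_k \;=\; -g_k^\T d_k - \eta_k\|u_k\|^2.
\]
Since $\|d_k\|^2 \le \|P_k^\T\|^2\|u_k\|^2 \le (\mathcal{C}n/s)\|u_k\|^2$ by \cref{assumption:concentration_of_PP}, and $\eta_k \ge c_2\|g_k\|^\gamma$, I will obtain the sharper upper bound
\[
d_k^\T H_k d_k \;\le\; -g_k^\T d_k \;-\; \frac{c_2 s\,\|g_k\|^\gamma}{\mathcal{C}n}\,\|d_k\|^2.
\]
This second term is the ``free'' negative-quadratic-in-$\|d_k\|$ contribution created by the regularization, and it is what must dominate the Lipschitz-Hessian remainder.

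Substituting back and grouping, the Armijo target becomes
\[
(1 - \alpha - t_k'/2)\, t_k'\, g_k^\T d_k \;+\; \Bigl[\tfrac{L_H\,t_k'}{6}\|d_k\| - \tfrac{c_2 s\,\|g_k\|^\gamma}{2\mathcal{C}n}\Bigr](t_k')^2\|d_k\|^2 \;\le\; 0.
\]
The first summand is $\le 0$ as soon as $t_k'\le 2(1-\alpha)$, which \cref{assumption:global_iteration_complexity}(ii) ($\alpha\le 1/2$) makes trivially compatible with the stated (small) bound. For the bracket I would substitute $\|d_k\|\le \mathcal{C}n\|g_k\|^{1-\gamma}/(c_2 s)$ from \cref{lem:upper_bound_of_d_k_1}: non-positivity reduces to $t_k' \le 3c_2^2 s^2\,\|g_k\|^{2\gamma-1}/(\mathcal{C}^2 L_H n^2)$. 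At this point \cref{assumption:global_iteration_complexity}(i) ($\gamma\le 1/2$) together with $\|g_k\|\le U_g$ from \cref{eq:upper_bound_g_k} gives $\|g_k\|^{2\gamma-1}\ge U_g^{2\gamma-1}=1/U_g^{1-2\gamma}$, so the hypothesized $t_k'\le c_2^2 s^2/(\mathcal{C}^2 L_H U_g^{1-2\gamma} n^2)$ is enough (with room to spare).

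The hard part will be precisely this interplay of all three hypotheses: (iii) is what makes the third-order expansion available and thereby produces the cubic remainder; (i) is indispensable for replacing the $\|g_k\|$-dependent quantity $\|g_k\|^{2\gamma-1}$ by an $\varepsilon$-free constant, which is the whole point of improving over \cref{lem:lower_bound_step_size}; and (ii) is what forces the $(1-\alpha-t_k'/2)$ prefactor on the dominant linear-in-$t_k'$ term to stay non-negative without reintroducing an $\varepsilon$-dependence. Once these are lined up, the bookkeeping is routine and the Armijo inequality follows.
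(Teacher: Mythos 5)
Your proposal is correct and follows essentially the same route as the paper's proof: both isolate the regularization term via the identity $P_kH_kP_k^\T = M_k - \eta_k I_s$ to extract a positive $c_2\|g_k\|^\gamma$-quadratic contribution, bound the Hessian variation along the step by $L_H t_k'\|d_k\|$ using \cref{assumption:global_iteration_complexity}(iii), and then invoke $\gamma\le 1/2$ with $\|g_k\|\le U_g$ to make the resulting threshold $\eps$-free. The only cosmetic difference is that you use the integral-form cubic Taylor bound with $H_k$ at $x_k$, while the paper keeps the exact second-order remainder at an intermediate point and splits off $\nabla^2 f(x_k+\tau_k't_k'd_k)-H_k$; both also share the same implicit reliance on $t_k'\le 1$ to handle the $(1-\alpha-t_k'/2)$ factor.
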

\proof{Proof.}
  As \cref{eq:armijo_by_taylor} is obtained in the proof of \Cref{lem:lower_bound_step_size}, there exists $\tau_k'\in(0,1)$ such that
  \begin{align*}
    &f(x_k) - f(x_k+t_k'd_k) + \alpha t_k' g_k^\T d_k\\
   =&(1-\alpha)t_k' g_k^\T P_k^\T M_k^{-1} P_k g_k - \frac 12 {t_k'}^2 g_k^\T P_k^\T M_k^{-1} P_k \nabla^2 f(x_k+\tau_k't_k'd_k) P_k^\T M_k^{-1} P_kg_k.
  \end{align*}
  Since we have $1-\alpha \ge 1/2 \ge t'_k/2$ from \Cref{assumption:global_iteration_complexity} $(ii)$, we obtain
  \begin{align}
   & f(x_k) - f(x_k+t_k'd_k) + \alpha t_k' g_k^\T d_k \nonumber \\
   \ge& \frac{1}{2}{t_k'}^2 g_k^\T P_k^\T M_k^{-1} P_k g_k - \frac 12 {t_k'}^2 g_k^\T P_k^\T M_k^{-1} P_k \nabla^2 f(x_k+\tau_k't_k'd_k) P_k^\T M_k^{-1} P_kg_k \nonumber\\
   =&\frac{1}{2}{t_k'}^2 g_k^\T P_k^\T (M_k^{-1} - M_k^{-1}P_k H_k P_k^\T M_k^{-1}) P_k g_k \nonumber\\
   & \phantom{\frac{1}{2}{t_k'}^2 g_k^\T P_k^\T} -\frac 12 {t_k'}^2 g_k^\T P_k^\T M_k^{-1} P_k (\nabla^2 f(x_k+\tau_k't_k'd_k)-H_k) P_k^\T M_k^{-1} P_kg_k.\label{eq:armijo_evaluate}
  \end{align}
  We next evaluate the first and second terms respectively. Since we have
  \begin{align}
    M_k^{-1} - M_k^{-1}P_k H_k P_k^\T M_k^{-1} &= M_k^{-1} - M_k^{-1}(M_k- 
    \eta_k I_s) M_k^{-1}\nonumber\\
    &= 
    \eta_k (M_k^{-1})^2,\label{eq:M_k_inv-}
  \end{align}
  the first term can be bounded as follows:
  \begin{align*}
    \frac{1}{2}{t_k'}^2 g_k^\T P_k^\T (M_k^{-1} - M_k^{-1}P_k H_k P_k^\T M_k^{-1}) P_k g_k &= \frac{1}{2}{t_k'}^2
    \eta_k\norm{M_k^{-1}P_kg_k}^2\\
    &\ge  \frac{1}{2}{t_k'}^2 c_2\norm{g_k}^\gamma\norm{M_k^{-1}P_kg_k}^2.
  \end{align*}
  Using \Cref{lem:concentration_of_PP} and \Cref{assumption:global_iteration_complexity} $(iii)$, we also obtain, with probability at least $1-2e^{-s}$, the bound of the second term:
  \begin{align*}
    &\frac 12 {t_k'}^2 g_k^\T P_k^\T M_k^{-1} P_k (\nabla^2 f(x_k+\tau_k't_k'd_k)-H_k) P_k^\T M_k^{-1} P_kg_k \\
    \le& \frac 12 {t_k'}^2 \norm{\nabla^2 f(x_k+\tau_k't_k'd_k)-H_k} \norm{P_k P_k^\T} \norm{M_k^{-1} P_kg_k}^2\\
    \le& \frac{\bar{\mathcal{C}}n}{2s} L_H {t_k'}^3  \norm{d_k}\norm{M_k^{-1} P_kg_k}^2.
  \end{align*}
  Thus, we have
  \begin{align} \label{eq:localt_k}
    f(x_k) - f(x_k+t_k'd_k) + \alpha t_k' g_k^\T d_k
    &\ge \frac{1}{2}{t_k'}^2 \left(c_2\norm{g_k}^\gamma - \frac{\bar{\mathcal{C}}n}{s} L_H t_k' \norm{d_k} \right) \norm{M_k^{-1} P_kg_k}^2 \notag\\
    &= \frac{\bar{\mathcal{C}}n}{2s} L_H {t_k'}^2\norm{d_k} \left(\frac{c_2s\norm{g_k}^\gamma}{\bar{\mathcal{C}}L_H n\norm{d_k}} - t_k'  \right) \norm{M_k^{-1} P_kg_k}^2.
  \end{align}
  Moreover, from \cref{eq:upper_bound_g_k}, \Cref{lem:upper_bound_of_d_k_1} and \Cref{assumption:global_iteration_complexity} $(i)$, we have
  \begin{equation*}
    \frac{\norm{g_k}^\gamma}{\norm{d_k}} \ge \frac{c_2s}{\bar{\mathcal{C}}n \norm{g_k}^{1-2\gamma}} \ge \frac{c_2s}{\bar{\mathcal{C}}U_g^{1-2\gamma}n},
  \end{equation*}
  so that we finally obtain 
  \begin{align*}
    f(x_k) - f(x_k+t_k'd_k) + \alpha t_k' g_k^\T d_k &\ge \frac{\bar{\mathcal{C}}n}{2s} L_H {t_k'}^2\norm{d_k} \left(\frac{c_2^2 s^2}{\bar{\mathcal{C}}^2L_H U_g^{1-2\gamma} n^2} - t_k'  \right) \norm{M_k^{-1} P_kg_k}^2\\
    &\ge 0.
  \end{align*}
  This completes the proof.
\endproof

\begin{corollary}\label{cor:lower_bound_t_k_2}
  Suppose that \Cref{assumption:RS-RNM_global} and \Cref{assumption:global_iteration_complexity} hold.
  Then, with probability at least $1-2e^{-s}$, the step size $t_k$ chosen in Line~6 of RS-RNM satisfies
  \begin{equation}\label{eq:lower_bound_t_k_2}
    t_k \ge t_{\min},
  \end{equation}
  where 
  \begin{equation*}
    t_{\min} = \min\left(1, \frac{\beta c_2^2s^2}{\bar{\mathcal{C}}^2L_H U_g^{1-2\gamma} n^2}\right).
  \end{equation*}
\end{corollary}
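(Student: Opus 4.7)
The plan is to mimic the backtracking argument used in the proof of \cref{cor:lower_bound_t_k}, but now feeding in the step-size threshold from \cref{lem:lower_bound_stepsize_2} rather than the one from \cref{lem:lower_bound_step_size}. Let me abbreviate the threshold by
\[
\bar t := \frac{c_2^2 s^2}{\mathcal{C}^2 L_H U_g^{1-2\gamma} n^2},
\]
so that \cref{lem:lower_bound_stepsize_2} guarantees Armijo's rule \eqref{eq:Armijo_rule} for every $t_k' \in (0, \bar t\,]$.

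First I would handle the easy case $\bar t \ge 1$. Here $t=1$ itself satisfies Armijo's condition by \cref{lem:lower_bound_stepsize_2}, so the backtracking loop in Line~6 terminates at $l_k = 0$ and $t_k = \beta^0 = 1 \ge t_{\min}$. In the remaining case $\bar t < 1$, I pick the unique integer $l_k \ge 0$ with $\beta^{l_k+1} < \bar t \le \beta^{l_k}$. By \cref{lem:lower_bound_stepsize_2}, the step-size $\beta^{l_k+1}$ already satisfies Armijo's rule, so the backtracking loop produces some $t_k = \beta^{l'_k}$ with $l'_k \le l_k + 1$, i.e.\ $t_k \ge \beta^{l_k+1} = \beta \cdot \beta^{l_k} \ge \beta\, \bar t$. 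Combining the two cases yields $t_k \ge \min(1,\beta\bar t) = t_{\min}$.

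The only subtlety is cosmetic: strictly speaking, the smallest integer $l_k$ for which Armijo holds may be smaller than the $l_k$ I constructed above, but that only makes $t_k = \beta^{l_k^{\text{actual}}}$ larger, which is in our favor. I do not expect any real obstacle here — the argument is exactly the one from \cref{cor:lower_bound_t_k} with $\bar t$ replacing the $\eps$-dependent threshold, and the only reason it is stated as a separate corollary is that the threshold coming from \cref{lem:lower_bound_stepsize_2} has no dependence on $\eps$, which is precisely what will enable the $O(\eps^{-2})$ iteration complexity in the next subsection.
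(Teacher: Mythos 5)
Your proposal is correct and matches the paper's proof, which is literally ``we get the conclusion in the same way as in the proof of \cref{cor:lower_bound_t_k} using \cref{lem:lower_bound_stepsize_2}''. The case split on whether the $\eps$-free threshold exceeds $1$, the choice of $l_k$ with $\beta^{l_k+1}$ below the threshold, and the resulting bound $t_k \ge \beta\,\bar t$ are exactly the intended argument.
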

\proof{Proof.}
  We get the conclusion in the same way as in the proof of Corollary \ref{cor:lower_bound_t_k} using Lemma \ref{lem:lower_bound_stepsize_2}.
\endproof

\begin{remark}
  Since \cref{eq:lower_bound_t_k_2} is equivalent to $\beta^{l_k} \ge t_{\min}$, and moreover 
  $$l_k \le \log t_{\min} / \log \beta,$$  Corollary \ref{cor:lower_bound_t_k_2} tells us that the number of the backtracking steps is bounded above by some constant independent of $k$.
\end{remark}

Now, we can obtain the global iteration complexity of RS-RNM. 
\begin{theorem}\label{thm:global_complexity}
  Suppose that \Cref{assumption:RS-RNM_global} and \Cref{assumption:global_iteration_complexity} hold. Consider any $\delta \in (0,1)$.
  Let 
  \begin{equation*}
  	m=\left\lfloor \frac{f(x_0)-f^*}{(1-\delta)(1-\delta_s)p\eps^2}\right\rfloor +1,
  	\quad\text{where}\quad
  	 p = \frac{\alpha t_{\min}}{2\bar{\mathcal{C}}(1+c_1) \frac{n}{s} U_H + 2c_2 U_g^\gamma},
  \end{equation*}
  and where $\delta_s=2\left(\exp(-\frac{\C_0}{4}s) - \exp(-s)\right)$.
  Then, we have that
  \begin{equation*}
    \sqrt{\frac{f(x_0)-f^*}{mp}} \ge \min_{k=0,1,\dots,m-1} \norm{g_k}
  \end{equation*}
  holds with probability at least $1-\exp\left(-\frac{\delta^2}{2}(1-\delta_s)m\right)$.
\end{theorem}
\proof{Proof.}
  Replacing $U_H(\eps)$ and $t_{\min(\eps)}$ with $U_H$, in \cref{eq:upper_bound_of_hessian_2}, and $t_{\min}$ respectively in the argument in the proof of Theorem \ref{thm:global_convergence}, 
  we have 
  \begin{equation*}
    f(x_k) - f(x_{k+1}) \ge p\norm{g_k}^2 \quad (k=0,1,\ldots, m-1),
  \end{equation*}
  with the given probability. Therefore, by using the same notation as in the proof of Theorem \ref{thm:global_convergence}, we obtain:
  \begin{align*}
  	f(x_0)-f^* &\ge f(x_0)-f(x_m) \\
  	&= \sum_{k=0}^{m-1} (f(x_k)-f(x_{k+1})) \\
  	&\ge p \sum_{k=0}^{m-1} \norm{g_k}^2 T_k\\
  	& \ge p\left(\min_{k=0,1,\dots,m-1}\norm{g_k}^2\right)\sum_{k=0}^{m-1} T_k \\
  	&\ge (1-\delta)(1-\delta_s)mp \left(\min_{k=0,1,\dots,m-1}\norm{g_k}^2\right),
  \end{align*}
  where the last inequality holds with probability $1-\exp\left(-\frac{\delta^2}{2}(1-\delta_s)m\right)$ as shown in \eqref{eq:Chernoff_bound}.
  This prove the theorem.
\endproof
If we ignore the probability, Theorem \ref{thm:global_complexity} shows that we get $\norm{g_k}\le \eps$ after at most $O(\eps^{-2})$ iterations. 
This global complexity $O(\eps^{-2})$ is the same as that obtained in \cite{ueda2010convergence} for the regularized Newton method.
Notice that, by a cubic regularization, the R-ARC algorithm in \cite{Shao} achieves  $O(\varepsilon^{-3/2})$ to obtain a first order stationary point. 

\section{Local convergence}\label{sec:local1}
In this section, we investigate local convergence properties of the sequence $\{x_k\}$ assuming that it converges to a  strict local minimizer $\bar{x}$. First we will show that the sequence converges locally linearly to the strict local minimizer; then we will prove that, when $f$ is strongly convex, we cannot aim at local super-linear convergence using random subspace. Finally, we will prove that when the Hessian at $\bar{x}$ is rank deficient then we can attain super-linear convergence for $s<n$ large enough.  

\begin{assumption}\label{assumption:local1}
	For all $x,y$
	$$\|\nabla^2 f(x) -\nabla^2 f(y)\| \le L_H\|x-y\|$$
	holds in some neighborhood $B_H$ of $\bar{x}$.
\end{assumption}

\subsection{Local linear convergence}
In this subsection we will show that the sequence $\{f(x_k)-f(\bar{x})\}$ converges locally linearly, i.e. there exists $\kappa \in (0,1)$ such that for $k$ large enough,
$$f(x_{k+1})-f(\bar{x}) \le (1-\kappa) (f(x_k)-f(\bar{x})).$$
{We will further prove that $\kappa$ can be expressed as $\kappa=O(\frac{s}{n \tilde{\kappa}(\nabla^2 f(\bar x))})$, where $\tilde{\kappa}(\nabla^2 f(\bar x))$ is the ratio of the largest eigenvalue value over the smallest non-zero eigenvalue of $\nabla^2 f(\bar{x})$.}
 Notice that, to the best of our knowledge, until now, local linear convergence is always proved for subspace algorithms assuming that the function is locally strongly convex or satisfies some PL-inequality \eqref{eq:pl}. 
 In this section we prove that under a H\"{o}lderian error bound condition, and an additional mild assumptions on the rank of the Hessian at the local minimizer, we can prove local linear convergence. 
 More precisely let us denote by $r=\rank(\nabla^2f(\bar{x}))$, which measures the number of positive eigenvalues of $\nabla^2 f(\bar{x})$. 
We will first prove, {under some assumption on the rank of the Hessian at $\bar{x}$ and on $s$,}  that for any $x$ in the a neighborhood of $\bar{x}$, the function
\begin{equation}\label{eq:ftilde}
	\tilde{f}_{x}: u\mapsto f(x+P^\top u),\quad \mbox{where $P$ is a random matrix sampled from $\mathcal{D}$} 
\end{equation}
is  strongly convex with high probability in a neighborhood of $0$. Let us fix $\sigma \in (0,1)$.  We recall here that $P\in \mathbb{R}^{s\times n}$ is equal to $\frac{1}{\sqrt{s}}$ times a random Gaussian matrix. In this subsection, we make the following additional assumptions:

\begin{assumption}\label{assumption:local2}
	\begin{enumerate}
		\renewcommand{\labelenumi}{(\roman{enumi})} 		
		\item There exists $\sigma \in (0,1)$ such that	$r=\rank(\nabla^2 f(\bar{x}))\ge \sigma n$.
		\item There exist $\rho \in (0,3) $ and $\tilde{C}$ such that in a neighborhood of $\bar{x}$, $f(x_k)-f(\bar{x})\ge \tilde{C}\|x_k-\bar{x}\|^\rho$ holds.
	\end{enumerate}
\end{assumption}

\begin{assumption}\label{assumption:s}
{We have that $s\le \min\left(\frac{\sigma}{4\mathcal{C}^2}, \frac{4(1-\sigma)}{\mathcal{C}^2}\right) n$.}
\end{assumption}

From \Cref{assumption:local2} $(i)$, $\nabla^2f(\bar{x})$ has $r$ positive eigenvalues, i.e,
$\lambda_1(\bar{x}) \ge \cdots \lambda_r(\bar{x}) >0$.
By continuity of the eigenvalues, there exists a neighborhood $\bar{B}$ of $\bar{x}$ such that for any $x \in \bar{B}$, $\lambda_r(x)\ge \frac{\lambda_r(\bar{x})}{2}$. Here, we assume, w.l.o.g. that $\bar{B}\subseteq B_H$, where $B_H$ is defined in \Cref{assumption:local1}.
Let us denote 
\begin{equation}\label{eq:lambda}
	\bar{\lambda}:=\frac{\lambda_r(\bar{x})}{2}.
\end{equation}
\Cref{assumption:local2} $(ii)$ is called a H\"{o}lderian growth condition or a H\"{o}lderian error bound condition \cite{holderian}. The condition is weaker than local strong convexity in the sense that it holds with $\rho = 2$ if $f$ is locally strongly convex. 

\begin{proposition}\label{prop:1}
	{
  Assume that \Cref{assumption:local2} $(i)$ and \Cref{assumption:s} hold. Let us consider $\tilde{f}_x$ defined by \eqref{eq:ftilde}. There exists a neighborhood $B^*\subseteq \bar{B}$ such that for any  $x \in B^*$,
	$$\nabla^2 \tilde{f}_{x}(0)\succeq \frac{n}{8s}\sigma\bar{\lambda} I_s$$
	holds with probability at least $1-6\exp(-s)$.}
\end{proposition}

\proof{Proof.}
	Let $x\in \bar{B}$ be fixed and let $P \in \mathbb{R}^{s \times n}$ be a Gaussian matrix. Because of $\nabla^2 \tilde{f}_{x}(0)= P \nabla^2 f(x)P^\top$,
        we have
         $u^\top \nabla^2 \tilde{f}_{x}(0)u =(P^\top u)^\top \nabla^2 f(x) (P^\top u)$ for any $u\in \mathbb{R}^s$. Let $\nabla^2 f(x)=U(x)D(x)U(x)^\top$ be the eigenvalue decomposition of $\nabla^2 f(x)$. Since $\nabla^2 \tilde{f}_{x}(0) =(P U(x)) D(x) (PU(x))^\top$ and  $PU(x)$ has the same distribution as $P$, we can assume here w.l.o.g. that $PU(x)=P$. Here 
	$$D(x)=\begin{pmatrix}
		\lambda_1(x) & 0 & \cdots & 0\\
		0 & \lambda_2(x) & \cdots &0 \\
		\vdots& & \ddots &  \vdots\\
		0& 0& \cdots & \lambda_n(x) 
	\end{pmatrix},$$
	where $\lambda_1(x)\ge \cdots \ge \lambda_n(x)$ and $\lambda_r(x)\ge \bar{\lambda}$ (since $x\in \bar{B}$).
     
	Let us decompose $P^\top$ such that
	$$P^\top =\begin{pmatrix}
		P^1 \\
		P^2
	\end{pmatrix}$$
	where $P^1 \in \mathbb{R}^{n_1 \times s}$ and $P^2\in \mathbb{R}^{n_2 \times s}$, where $n_1$ and $n_2$ are chosen such that $n_1=r$ and $n_2=n - r$. Furthermore let $D_1(x)$ and $D_2(x)$ be respectively the $n_1 \times n_1$ and $n_2 \times n_2$ diagonal matrix such that $D(x)=\begin{pmatrix}
		D_1(x) & 0 \\
		0& D_2(x)
	\end{pmatrix}$. We have 
	\begin{equation}\label{eq:decomposition}
		(P^\top u)^\top D(x) (P^\top u) = (P^1 u)^\top D_1(x) (P^1 u) + (P^2 u)^\top D_2(x) (P^2u).
	\end{equation}
	By \Cref{assumption:local2} $(i)$, and by definition of $\bar{B}$, we have that $D_1(x) \succeq \lambda_{r}(x) I_{n_1} \succeq \bar{\lambda} I_{n_1} \succ 0$, and $D_2(x) \succeq \lambda_n(x) I_{n_2}$. Hence from \cref{eq:decomposition}, we have
	\begin{equation}\label{eq:decomposition2}
		(P^\top u)^\top D(x) (P^\top u) \ge \bar{\lambda} \|P^1 u\|^2 + \lambda_n(x)\|P^2u\|^2 .
	\end{equation}
	Let $\sigma_{\max}(\cdot)$ and $\sigma_{\min}(\cdot)$ denote respectively the largest and the smallest singular value of a matrix.
	Using \cite[Theorem 4.6.1]{vershynin2018high}, there exists a constant $\mathcal{C}$ such that with probability at least $1-6\exp(-s)$:
	\begin{align}
		&\sqrt{\frac{n}{s}}-\mathcal{C} \le \sigma_{\min}(P^\top) \le \sigma_{\max}(P^\top) \le \sqrt{\frac{n}{s}}+\mathcal{C},  \label{eq:sv_PT} \\
		& \sqrt{\frac{n_1}{s}}-\mathcal{C} \le \sigma_{\min}(P^1) \le \sigma_{\max}(P^1) \le \sqrt{\frac{n_1}{s}}+\mathcal{C}, \nonumber \\
		& \sqrt{\frac{n_2}{s}}-\mathcal{C} \le \sigma_{\min}(P^2) \le \sigma_{\max}(P^2) \le \sqrt{\frac{n_2}{s}}+\mathcal{C}.\nonumber 
	\end{align}
More precisely, since all the three matrices $P^\top,P^1$ and $P^2$ are Gaussian random matrices, we can apply \cite[Theorem 4.6.1]{vershynin2018high} and deduce that each of the three inequalities above holds with probability $1-2\exp(-s)$. The probability that all the three equations hold is derived using \cref{eq:unionbound}.
	Hence, with probability at least $1-6e^{-s}$, for any $u \in \mathbb{R}^s$, 
	\begin{align*}
		\|P^1u\| \ge & \sqrt{n/s}\left(\frac{\sqrt{\frac{n_1}{s}}-\mathcal{C}}{\sqrt{n/s}}\right) \|u\|, \\
		\|P^2u\| \le & \sqrt{n/s}\left(\frac{\sqrt{\frac{n_2}{s}}+\mathcal{C}}{\sqrt{n/s}}\right) \|u\|.
	\end{align*}
	We have that $n_1\ge \sigma n$ and $n_2 \le (1-\sigma)n$. Furthermore, we have by \Cref{assumption:s} that {$s \le \frac{\sigma}{4\mathcal{C}^2}n$ implies that  $\sqrt{\frac{\sigma n}{s}}-\mathcal{C} \ge \frac{1}{2} \sqrt{\frac{\sigma n}{s}}$ and $s \le \frac{4(1-\sigma)}{4\mathcal{C}^2}n$ implies that $\sqrt{\frac{(1-\sigma) n}{s}}+\mathcal{C} \le 2\sqrt{\frac{(1-\sigma) n}{s}}$. Hence 
	$$ \frac{\sqrt{\frac{n_1}{s}}-\mathcal{C}}{\sqrt{n/s}}\ge \frac{1}{2} \sqrt{\sigma} \quad \& \quad \frac{\sqrt{\frac{n_2}{s}}+\mathcal{C}}{\sqrt{n/s}}\le 2\sqrt{(1-\sigma)} .$$}
	Therefore, 
	\begin{align*}
		\|P^1u\| \ge &{\frac{1}{2}}\sqrt{\sigma (n/s)} \|u\|, \\
		\|P^2u\| \le & {2}\sqrt{(1-\sigma) (n/s)} \|u\|.
	\end{align*}
	Hence, from \cref{eq:decomposition2}, we have that 
	$$ (P^\top u)^\top D(x) (P^\top u) \ge {n/s}\left(\frac{1}{{4}}\sigma\bar{\lambda} + {4}(1-\sigma)\min(\lambda_n(x),0)\right)\|u\|^2 .$$
	We conclude the proposition by noticing that $\min(\lambda_n(x),0)$ tends to $0$, hence the claim holds by considering a neighborhood $B^*\subseteq \bar{B}$ of $\bar{x}$ small enough.
\endproof
We deduce the following PL inequality for $\tilde{f}_x$ when $x\in B^*$.
\begin{proposition}\label{prop:2}
	{
 Assume that \Cref{assumption:local1}, \Cref{assumption:local2} $(i)$ and \Cref{assumption:s} hold, and let $P \in \mathbb{R}^{s \times n}$ be a Gaussian matrix. There exist neighborhoods $\hat{B}\subset B^*$ and $B_0$ (a neighborhood of $0 \in \mathbb{R}^s$) such that for any  $x \in \hat{B}$,
	$$ {\nabla\tilde{f}_x(0)^\top (P\nabla ^2f({x})P^\top)^{-1} \nabla \tilde{f}_x(0) \ge  f(x)-\min\limits_{u\in B_0} f(x+P^\top u)}$$
	holds with probability at least $1-6\exp(-s)$.}
\end{proposition}
\proof{Proof.}
	Let $\hat{B}\subset B^*$, and let $x\in \hat{B}$. By the Taylor expansion of $\tilde{f}_x$ at $0$, there exists $\tilde{x} \in [x,x+P^\top u]$ such that
	$$f(x+P^\top u) = f(x)+ (P\nabla f(x))^\top u +\frac{1}{2}u^\top P\nabla ^2f({\tilde{x}})P^\top u.$$
	Since, by Proposition \ref{prop:1}, we have that $P\nabla ^2f(\tilde{x})P^\top  \succ 0$ for any  $x+P^\top u \in B^*$, we deduce by \Cref{assumption:local1} that for $u$ small enough:
	\begin{equation}\label{eq:abc}
		f(x+P^\top u) \ge f(x)+ (P\nabla f(x))^\top u +\frac{1}{4}u^\top P\nabla ^2f({x})P^\top u.
	\end{equation}
	Let $B_0$ be a neighborhood of $0 \in \mathbb{R}^s$ such that, \eqref{eq:abc} holds, and $x+P^\top u \in B^*$ for any $x \in \hat{B}$. Let $g(u)=(P\nabla f(x))^\top u +\frac{1}{4}u^\top P\nabla ^2f({x})P^\top u$. By the above inequality we have that
	\begin{equation}\label{eq:subspace_min_ineq}
           \min\limits_{u \in B_0} f(x+P^\top u)\ge f(x)+ \min\limits_{u \in B_0} g(u).
       	\end{equation}
 By Proposition \ref{prop:1} we know that for any $u\in \mathbb{R}^s$ such that $x+P^\top u \in B^*$, $g$ is convex. Thus, the minimum is attained at the point $u^*$ satisfying 
	$$\nabla g(u^*)= P\nabla f(x)+\frac{1}{2}P\nabla ^2f({x})P^\top u^*=0.$$
	Hence, since $\|\nabla f(x)\|$ tends to $0$ as $x$ tends to $\bar{x}$, we can ensure, by taking $\hat{B}$ small enough, that $u^* \in B_0$.  Hence 
	\begin{align*}
		 \min\limits_{u \in B_0} g(u)&=		 -{2}(P\nabla f(x))^\top (P\nabla ^2f({x})P^\top)^{-1}P\nabla f(x) +\frac{1}{4}{4}(P\nabla f(x))^\top (P\nabla ^2f({x})P^\top)^{-1}P\nabla f(x)\\
		 &=-(P\nabla f(x))^\top (P\nabla ^2f({x})P^\top)^{-1}P\nabla f(x)
	\end{align*}
holds and \cref{eq:subspace_min_ineq} yields the desired inequality.
\endproof

Before proving local linear convergence, we prove the following technical proposition.
\begin{proposition}\label{prop:3}
  Assume that \Cref{assumption:local1}, \Cref{assumption:s}, and \Cref{assumption:local2} hold. 
	{There exists $k_0 \in \mathbb{N}$ such that if $k\ge k_0$,
	 we have with probability }$1-6(\exp(-s)+\exp(-\frac{\mathcal{C}_0}{4}s))$:
	$${ f(x_k)-\min\limits_{u \in B_0} \tilde{f}_{x_k}(u) \ge \frac{\lambda_0}{4\lambda_{\max}(\bar{H})\left(\sqrt{\frac{n}{s}}+\mathcal{C}\right)^2} (f(x_k)- f(\bar{x})),}$$
	where $\lambda_0$ is the minimal non-zero eigenvalue of $\bar{H}:=\nabla^2 f(\bar{x})$.
\end{proposition}
\proof{Proof.}
	Using a Taylor expansion around $\bar{x}$, we have that for all $y\in \hat{B}$,
	\begin{equation}\label{eq:3}
		|f(y) -f(\bar{x}) - \frac{1}{2}(y-\bar{x})^\top \bar{H} (y-\bar{x})| \le L_H\|y-\bar{x}\|^3, 
	\end{equation}
        where we define
        \begin{equation}\label{eq:barh}
	\bar{H}:=\nabla^2f(\bar{x}).
        \end{equation}
        Also, for $u\in \mathbb{R}^d$ small enough, we have by setting $y=x_k+P_k^\top u$ in \cref{eq:3}, that for $k$ large enough such that $x_k+P_k^\top u \in \hat{B}$,
	\begin{align}\label{eq:prop2a}
		&|f(x_k+P_k^\top u) -f(\bar{x})- \frac{1}{2}(x_k-\bar{x})^\top \bar{H} (x_k-\bar{x})- \frac{1}{2}u^\top P_k\bar{H}P_k^\top u- (P_k\bar{H}(x_k-\bar{x}))^\top u| \\
		&\le L_H\|x_k-\bar{x}+P_k^\top u\|^3 \nonumber
	\end{align}
	holds.
	
	Let $g(u)=\frac{1}{2}u^\top P_k\bar{H}P_k^\top u+ (P_k\bar{H}(x_k-\bar{x}))^\top u$. By a reasoning similar to that of Proposition \ref{prop:1}, $g$ is strongly convex with probability $1-6e^{-s}$ and hence is minimized at
        \begin{equation}\label{eq:opt_u}
        u^*=-(P_k\bar{H}P_k^\top)^{-1}P_k\bar{H}(x_k-\bar{x}).
        \end{equation}
    Notice that as $k$ tends to infinity $\|u^*\|$ tends to $0$, hence for $k$ large enough we have $x_k+P_k^\top u^* \in \hat{B}$ and $u^*\in B_0$.
        Plugging \cref{eq:opt_u} in \cref{eq:prop2a} yields
	\begin{align*}
		&f(x_k+P_k^\top u^*)\le f(\bar{x})+ \frac{1}{2}(x_k-\bar{x})^\top \bar{H} (x_k-\bar{x}) - \\
		& \frac{1}{2}(x_k-\bar{x})^\top \bar{H}P_k^\top(P_k\bar{H}P_k^\top)^{-1}P_k\bar{H}(x_k-\bar{x})+L_H\|x_k-\bar{x}+P_k^\top u^*\|^3,
	\end{align*} 
	from which we deduce
	\begin{align*}
		&f(x_k)-	f(x_k+P_k^\top u^*) \ge \\
		& f(x_k)-f(\bar{x})-\frac{1}{2}(x_k-\bar{x})^\top \bar{H} (x_k-\bar{x})+\frac{1}{2}(x_k-\bar{x})^\top \Pi(x_k-\bar{x}) - L_H\|x_k-\bar{x}+P_k^\top u^*\|^3,
	\end{align*}
	where $\Pi=\bar{H}P_k^\top(P_k\bar{H}P_k^\top)^{-1}P_k\bar{H}$. Using \cref{eq:3}, we further obtain
	\begin{equation}\label{eq:4}
		f(x_k)-	f(x_k+P_k^\top u^*) \ge \frac{1}{2} (x_k-\bar{x})^\top \Pi(x_k-\bar{x}) - L_H(\|x_k-\bar{x}+P_k^\top u^*\|^3+\|x_k-\bar{x}\|^3).
	\end{equation}

	We have $(x_k-\bar{x})^\top \Pi(x_k-\bar{x})=(\bar{H}^{1/2}(x_k-\bar{x}))^\top \bar{\Pi}(\bar{H}^{1/2}(x_k-\bar{x}))$,
	where $\bar{\Pi}:=\bar{H}^{1/2}P_k^\top(P_k\bar{H}P_k^\top)^{-1}P_k\bar{H}^{1/2}$ is an orthogonal projection matrix into $\Range(\bar{H}^{1/2}P_k^\top)$ parallel to $ \ker{P_k\bar{H}^{1/2}}$. Hence
	$$(x_k-\bar{x})^\top \Pi(x_k-\bar{x}) =\|\bar{\Pi}\bar{H}^{1/2}(x_k-\bar{x})\|^2. $$ 
	Since $\|P_k \bar{H}^{1/2}\|^2\|\bar{\Pi}\bar{H}^{1/2}(x_k-\bar{x})\|^2\ge \|P_k \bar{H}^{1/2}\bar{\Pi}\bar{H}^{1/2}(x_k-\bar{x})\|^2$,
	we have
	\begin{align}\label{eq:align}
			(x_k-\bar{x})^\top \Pi(x_k-\bar{x}) &\ge \frac{1}{\|P_k\bar{H}^{1/2}\|^2}\|P_k\bar{H}^{1/2}\bar{\Pi}\bar{H}^{1/2}(x_k-\bar{x})\|^2 \nonumber\\
			& =\frac{1}{\|P_k\bar{H}^{1/2}\|^2} \|P_k\bar{H}(x_k-\bar{x})\|^2 \nonumber\\
			& \ge \frac{1}{2\|P_k\bar{H}^{1/2}\|^2} \|\bar{H}(x_k-\bar{x})\|^2 \nonumber\\
			& \ge \frac{\lambda_0}{2\|P_k\bar{H}^{1/2}\|^2} \|\bar{H}^{1/2}(x_k-\bar{x})\|^2 \nonumber\\
			& = \frac{\lambda_0}{2\lambda_{\max}(P_k\bar{H}P_k)} \|\bar{H}^{1/2}(x_k-\bar{x})\|^2 \nonumber\\
			& =\frac{\lambda_0}{2\lambda_{\max}(P_k\bar{H}P_k)}(x_k-\bar{x})^\top \bar{H}(x_k-\bar{x}).
			\end{align}
		where the second inequality holds with probability at least $1-2\exp(-\frac{\mathcal{C}_0}{4}s)$ (by \Cref{lem:JLL_random_matrix} with $\eps=\frac{1}{2}$), and the third holds as $\lambda_0$ is the smallest non-zero eigenvalue of $\bar{H}$. The second equality holds as $\sigma_{\max}(P_k\bar{H}^{1/2})^2=\lambda_{\max} (P_k\bar{H}_kP_k)$.
		We have therefore proved that 
		\begin{equation}\label{eq:barpieigen}
			(\bar{H}^{1/2}(x_k-\bar{x}))^\top \bar{\Pi}(\bar{H}^{1/2}(x_k-\bar{x})) \ge \frac{\lambda_0}{2\lambda_{\max}(P_k\bar{H}P_k)} (x_k-\bar{x})^\top \bar{H}(x_k-\bar{x}).
		\end{equation}
	Hence, by \cref{eq:4}, we have 
	\begin{align}\label{eq:4a}
		f(x_k)-	f(x_k+P_k^\top u^*) \ge & \frac{\lambda_0}{4\lambda_{\max}(P_k\bar{H}P_k)} (x_k-\bar{x})^\top \bar{H}(x_k-\bar{x}) \nonumber\\
		& - L_H(\|x_k-\bar{x}+P_k^\top u^*\|^3+\|x_k-\bar{x}\|^3).
	\end{align}
	From \cref{eq:opt_u}, we have that $\|x_k-\bar{x}+P_k^\top u^*\|=\|(I_n-P_k^\top (P_k\bar{H}P_k^\top)^{-1}P_k\bar{H})(x_k-\bar{x})\|$. Hence
	\begin{equation}\label{eq:prop2ba}
		\|x_k-\bar{x}+P_k^\top u^*\| \le \|I_n-P_k^\top (P_k\bar{H}P_k^\top)^{-1}P_k\bar{H}\|\|x_k-\bar{x}\|. 
	\end{equation}
	Since $P_k^\top (P_k\bar{H}P_k^\top)^{-1}P_k\bar{H}$ is projection matrix (along $\Img(P_k^\top)$ parallel to $\Ker(P_kH)$), we have by \cite{andreev2014note} that
    \begin{equation}\label{eq:prop2b}
    	\|I_n-P_k^\top (P_k\bar{H}P_k^\top)^{-1}P_k\bar{H}\|=\|P_k^\top (P_k\bar{H}P_k^\top)^{-1}P_k\bar{H}\|.
    \end{equation}
	Furthermore, by Proposition \ref{prop:1}, we have that with probability at least $1-6\exp(-s)$, {
	$$P_k\bar{H}P_k^\top \succeq \frac{n}{8s}\sigma\bar{\lambda}I_s.$$
	Hence, we deduce from \cref{eq:prop2b} that
	\begin{equation}\label{eq:prop2c}
		\|I_n-P_k^\top (P_k\bar{H}P_k^\top)^{-1}P_k\bar{H}\| \le \frac{\|P_k^\top \|^2\|\bar{H}\| }{\frac{n}{8s}\sigma\bar{\lambda}}. 
	\end{equation}}
 Therefore, we deduce by \cref{eq:4a}, \cref{eq:prop2ba} and \cref{eq:prop2c} for $\beta_1>0$ suitably chosen, we have
	\begin{equation}\label{eq:prop2d}
		f(x_k)-	f(x_k+P_k^\top u^*) \ge \frac{\lambda_0}{4\lambda_{\max}(P_k\bar{H}P_k)} (x_k-\bar{x})^\top \bar{H}(x_k-\bar{x}) - \beta_1\|x_k-\bar{x}\|^3.
	\end{equation}
	By taking $y=x_k$ in \cref{eq:3}, we have that 
	$$ \frac{1}{2}(x_k-\bar{x})^\top \bar{H}(x_k-\bar{x}) \ge f(x_k)-f(\bar{x})-L_H\|x_k-\bar{x}\|^3.$$
	Hence, by \cref{eq:prop2d}
	$$f(x_k)-	f(x_k+P_k^\top u^*) \ge \frac{\lambda_0}{2\lambda_{\max}(P_k\bar{H}P_k)} (f(x_k)-f(\bar{x})) - (\frac{\lambda_0}{2\lambda_{\max}(P_k\bar{H}P_k)}L_H+\beta_1)\|x_k-\bar{x}\|^3.$$
	By \Cref{assumption:local2} $(ii)$,
	$$f(x_k)-	f(x_k+P_k^\top u^*) \ge (\frac{\lambda_0}{2\lambda_{\max}(P_k\bar{H}P_k)}- (\frac{\lambda_0}{2\lambda_{\max}(P_k\bar{H}P_k)}L_H+\beta_1)\frac{1}{\tilde{C}}\|x_k-\bar{x}\|^{3-\rho} ) (f(x_k)-f(\bar{x})). $$
	Since $\|x_k-\bar{x}\|$ tends to $0$ as $k$ tends to infinity and $\rho <3$, we have that for $k$ large enough
	$$ f(x_k)-\min\limits_{u\in B_0}f(x_k+P_k^\top u) \ge f(x_k)-	f(x_k+P_k^\top u^*)\ge  \frac{\lambda_0}{4\lambda_{\max}(P_k\bar{H}P_k)}(f(x_k)-f(\bar{x})),$$
	where the first inequality holds as, by \cref{eq:opt_u}, $u^*\in B_0$ for $k$ large enough. The probability bound in the statement of the theorem is obtained by using \cref{eq:unionbound}: in the whole proof we only use Lemma \ref{lem:JLL_random_matrix} with $\varepsilon=\frac{1}{2}$, which holds with probability at least $1-2\exp(-\frac{\mathcal{C}_0}{4}s)$, and the inequalities \cref{eq:sv_PT} which hold with probability at least $1-6\exp(-s)$. We also factorize the expression, using that $1-2\exp(-\frac{\mathcal{C}_0}{4}s)> 1-6\exp(-\frac{\mathcal{C}_0}{4}s) $.
	We end the proof by noticing that
	$\lambda_{\max}(P_k\bar{H}P_k) \le  \lambda_{\max}(\bar{H})\sigma_{\max}(P_k)^2 $, hence by the first equation of \cref{eq:sv_PT}
	\begin{equation}\label{eq:PHP}
		\lambda_{\max}(P_k\bar{H}P_k) \le \lambda_{\max}(\bar{H})\left(\sqrt{\frac{n}{s}}+\mathcal{C}\right)^2.
	\end{equation}
\endproof

We are now ready to prove the main theorem of this section.
\begin{theorem}\label{thm:linearconv}
	Assume that \Cref{assumption:local1}, \Cref{assumption:local2} and \Cref{assumption:s} hold. 
	There exist $0<\kappa <1$, $k_0 \in \mathbb{N}$, such that if $k\ge k_0$, then  
	$${f(x_{k+1})-f(\bar{x}) \le \left(1-\frac{1}{2}\alpha(1-\alpha)\frac{\lambda_0}{4\lambda_{\max}(\bar{H})\left(\sqrt{\frac{n}{s}}+\mathcal{C}\right)^2}\right) (f(x_k)-f(\bar{x}))}$$
	holds with probability at least $1-6(\exp(-s)+\exp(-\frac{\mathcal{C}_0}{4}s))$. Here $\alpha\in (0,1)$ is a parameter of Algorithm \ref{alg:RSN}.
\end{theorem}
\proof{Proof.}
We recall that we use a backtracking line search to find at each iteration $k$ a step-size $t_k$ such that
$$f(x_k +t_kd_k) \le f(x_k)+ \alpha t_k\nabla f(x_k)^\top d_k, $$
with $d_k=P_k^\top u_k$ and the update rule $t_k \leftarrow \beta t_k$ for $0<\alpha <1$  and $0<\beta<1$. We recall that 
\begin{equation}\label{eq:uk}
	u_k=-(P_k H_k P_k^\top+\eta_k I_s)^{-1}P_kg_k,
\end{equation} 
where we recall that $\eta_k=c_1\Lambda_k  + c_2\norm{g_k}^\gamma$. 
By a Taylor expansion of $f$ around $x_k$, there exists $x_k^* \in [x_k,x_{k+1}]$ such that
\begin{equation}\label{eq:taylor_f}
	f(x_k+t_kP_k^\top u_k) = f(x_k)+t_k (P_kg_k)^\top u_k+ \frac{t_k^2 }{2}u_k^\top P_k\nabla^2 f(x_k^*)P_k^\top u_k .
\end{equation}
Notice that  $\nabla^2 f$ is Lipschitz continuous (by \Cref{assumption:local1}). Furthermore, by Proposition \ref{prop:1}, for $k$ large enough, $P_k{H}_kP_k^\top$ is positive definite with probability at least $ 1-6\exp(-s)$ as the sequence $\{x_k\}$ converges to $\bar{x}$. Hence, for $k$ large enough
\begin{align*}
	u_k^\top P_k\nabla^2 f(x_k^*)P_k^\top  u_k  &\le u_k^\top P_k{H}_kP_k^\top  u_k + \|P_k^\top u_k\|^2  \|H_k-\nabla^2 f(x_k^*)\| \\
	&\le   u_k^\top P_k{H}_kP_k^\top u_k + L_H\|P_k^\top u_k\|^2  \|x_k-x_{k+1}\|\le   2u_k^\top P_k{H}_kP_k^\top u_k 
\end{align*}
holds with probability at least $1-6(\exp(-s)+\exp(-\frac{\mathcal{C}_0}{4}s))$. By \cref{eq:taylor_f}, we deduce that for $k$ large enough:
\begin{align*}		
	f(x_k+t_kP_k^\top u_k) &\le f(x_k)+t_k (P_kg_k)^\top u_k+ 2\frac{t_k^2 }{2}u_k^\top P_k{H}_kP_k^\top u_k \\
	&\le f(x_k)+t_k (P_kg_k)^\top u_k+ {t_k^2 }u_k^\top (P_k{H}_kP_k^\top+\eta_kI_s) u_k,
\end{align*}
where the second inequality holds as $\eta_k \ge 0$.
Let
\begin{equation}\label{eq:def_mu_squared}
  \mu_k^2 :=-g_k^\top d_k=(P_kg_k)^\top(P_kH_kP_k^\top+\eta_k I_s)^{-1}(P_kg_k).
\end{equation}
Since $(P_kg_k)^\top u_k=g_k^\top(P_k^\top u_k)=-\mu_k^2$, and by definition of $u_k$ in \cref{eq:uk}, we can write
\begin{equation}\label{eq:thm1}
	f(x_k+t_kP_k^\top u_k) \le f(x_k)-t_k \mu_k^2+  {t_k^2 }u_k^\top (P_k{H}_kP_k^\top+\eta_kI_s) u_k=f(x_k)-t_k \mu_k^2+  {t_k^2 }\mu_k^2.
\end{equation}
 Hence,  we have
$$ f(x_{k+1}) \le f(x_k)-t_k\left(1-t_k\right)\mu_k^2.$$
 Thus the step-size $t_k=1-\alpha$ satisfies the
 exit condition,
    $f(x_k) - f(x_k + t_k d_k) \ge -\alpha t_k g_k^\T d_k$,
 in the backtracking line search as we have
$$\left(1-t_k\right)=\alpha$$
 for such $t_k$. Therefore, the backtracking line search stops with some $t_k\ge 1-\alpha $, and 
 we have
\begin{equation}\label{eq:thm2}
	f(x_{k+1}) \le f(x_k)-\alpha  (1-\alpha) \mu_k^2.
\end{equation}

Notice that since $\eta_k$ tends to $0$, we have that
$$\mu_k^2=(P_kg_k)^\top(P_kH_kP_k^\top+\eta_k I_s)^{-1}(P_kg_k)\ge \frac{1}{2} (P_kg_k)^\top(P_k\bar{H}P_k^\top)^{-1}(P_kg_k)  .$$
Hence, by Proposition \ref{prop:2}, we have that when $k$ is large enough,
\begin{align}\label{eq:9}
	&f(x_{k+1})-f(\bar{x}) \le f(x_k)-f(\bar{x}) - \frac{1}{2}\alpha(1-\alpha)\left(f(x_k)-\min\limits_{u \in B_0} \tilde{f}_{x_k}(u)\right)
\end{align}
holds with probability at least $1-6(\exp(-s)+\exp(-\frac{\mathcal{C}_0}{4}s))$. By Proposition \ref{prop:3}, we have that 
 $ f(x_k)-\min\limits_{u \in B_0} \tilde{f}_{x_k}(u) \ge\frac{\lambda_0}{4\lambda_{\max}(\bar{H})\left(\sqrt{\frac{n}{s}}+\mathcal{C}\right)^2} (f(x_k)- f(\bar{x}))$ holds with probability at least  $1-6(\exp(-s)+\exp(-\frac{\mathcal{C}_0}{4}s))$. Hence

\begin{equation}\label{eq:10}
	f(x_{k+1})-f(\bar{x}) \le \left(1-\frac{1}{2}\alpha(1-\alpha)\frac{\lambda_0}{4\lambda_{\max}(\bar{H})\left(\sqrt{\frac{n}{s}}+\mathcal{C}\right)^2}\right)\left(f(x_k)-f(\bar{x})\right),
\end{equation}
which proves the theorem.
\endproof

\begin{remark}\label{rem:1}{	
Notice that the rate we obtain corresponds to a high probability estimation of the local convergence rate derived, when $f$ is assumed to be strongly convex, in the stochastic subspace cubic Newton method \cite{cubicNewton}. This can be seen in the proof of Proposition \ref{prop:3}, where the rate $\frac{\lambda_0}{4\lambda_{\max}(\bar{H})\left(\sqrt{\frac{n}{s}}+\mathcal{C}\right)^2}$ corresponds to a lower bound of $\lambda_{\min}(\bar{H}^{1/2}P_k^\top(P_k\bar{H}P_k^\top)^{-1}P_k\bar{H}^{1/2}) $, as seen in \cref{eq:barpieigen} and \cref{eq:PHP}. More specifically, this corresponds to a high probability lower bound of the parameter $\zeta=\lambda_{\min}[\mathbb{E}(\bar{\Pi})]=\lambda_{\min}[\mathbb{E}(\bar{H}^{1/2}P_k^\top(P_k\bar{H}P_k^\top)^{-1}P_k\bar{H}^{1/2})]$ that appears in the local convergence rate in Theorem 6.2 of \cite{cubicNewton}.}
\end{remark}

Let us define
$${\kappa :=\frac{1}{2}\alpha(1-\alpha)\frac{\lambda_0}{4\lambda_{\max}(\bar{H})\left(\sqrt{\frac{n}{s}}+\mathcal{C}\right)^2}<1 .}$$
We have the following direct corollary:
\begin{corollary}\label{cor:1}
	Assume that \Cref{assumption:local1}, \Cref{assumption:local2} and \Cref{assumption:s} hold. 
	There exist $k_0 \in \mathbb{N}$ such that if $k\ge k_0$, then, for any $m\in \mathbb{N}$, 
	$$f(x_{k+m})-f(\bar{x}) \le (1-\kappa)^m (f(x_k)-f(\bar{x}))$$
	holds with probability at least $1-6m(\exp(-s)+\exp(-\frac{\mathcal{C}_0}{4}s))$.
\end{corollary}
\proof{Proof.}
	This is a direct consequence of Theorem \ref{thm:linearconv} where the success probability is obtained by union bound, using \cref{eq:unionbound}.
\endproof
%

Notice that one can also derive an expectation version of Theorem \ref{thm:linearconv} as follows.
\begin{corollary}\label{cor:2}
{Assume that \Cref{assumption:local1}, \Cref{assumption:local2} and \Cref{assumption:s} hold. 
	There exist $k_0 \in \mathbb{N}$ such that if $k\ge k_0$, then, 
	$$\mathbb{E}\left[f(x_{k+1})-f(\bar{x})\right] \le (1- p^2\kappa) \mathbb{E}\left[f(x_{k})-f(\bar{x})\right],$$
	where $p:=1-6(\exp(-s)+\exp(-\frac{\mathcal{C}_0}{4}s))$. Here the expectation is taken with respect to the random variables $P_0,P_1,P_2,\cdots,P_k$.}
\end{corollary}
\proof{Proof.}
By \cref{eq:9} we have that 
	$$f(x_{k+1})-f(\bar{x}) \le f(x_k)-f(\bar{x}) - \frac{1}{2}\alpha(1-\alpha)\left(f(x_k)-\min\limits_{u \in B_0} \tilde{f}_{x_k}(u)\right)$$
	holds with probability $p=1-6(\exp(-s)+\exp(-\frac{\mathcal{C}_0}{4}s)) $.
	Let us denotes by $\mathcal{E}$ the event, with respect to $P_k,$ on which the above equation holds. Since $f(x_{k+1})-f(\bar{x}) \le f(x_k)-f(\bar{x})$ holds with probability one,  we can write that
	$$ f(x_{k+1})-f(\bar{x}) \le f(x_k)-f(\bar{x}) - \frac{1}{2}\alpha(1-\alpha)\left(f(x_k)-\min\limits_{u \in B_0} \tilde{f}_{x_k}(u)\right)\mathbf{1}_{\mathcal{E}},$$
	where $\mathbf{1}_{\mathcal{E}}$ is the indicator function over $\mathcal{E}$. Let us consider the following conditional expectation: $\mathbb{E}\left[ \cdot\ |\ P_0,...,P_{k-1} \right]$.  We have that 
	\begin{equation}\label{eq:cor2a}
		\mathbb{E}\left[ f(x_{k+1})-f(\bar{x})\ |\ P_0,...,P_{k-1} \right] \le f(x_k)-f(\bar{x}) - \frac{1}{2}\alpha(1-\alpha) \mathbb{E}\left[\left(f(x_k)-\min\limits_{u \in B_0} \tilde{f}_{x_k}(u)\right)\mathbf{1}_{\mathcal{E}}\ |\   P_0,...,P_{k-1}\right]
	\end{equation}
holds as $f(x_k)-f(\bar{x}) $ is measurable with respect to the sigma algebra generated by $P_1,\cdots,P_{k-1}$.
Let us define the event
		$$\mathcal{E}'=\left\{ f(x_k)-\min\limits_{u \in B_0} \tilde{f}_{x_k}(u) \ge \frac{\lambda_0}{4\lambda_{\max}(\bar{H})\left(\sqrt{\frac{n}{s}}+\mathcal{C}\right)^2} (f(x_k)- f(\bar{x}))\ |\ x_k\right\},$$
		on this sigma algebra, which holds by probability at least $p=1-6(\exp(-s)+\exp(-\frac{\mathcal{C}_0}{4}s)) $, by Proposition  \ref{prop:3}. By conditioning the right-hand-side of \cref{eq:cor2a} with respect to this event, we obtain that when $k$ is large enough
	\begin{align*}
		\mathbb{E}\left[f(x_{k+1})-f(\bar{x})\ |\ P_0,\cdots,P_{k-1}\right] &\le f(x_{k})-f(\bar{x}) - \frac{1}{2}\alpha(1-\alpha) \mathbb{E}\left[\frac{\lambda_0}{4\lambda_{\max}(\bar{H})\left(\sqrt{\frac{n}{s}}+\mathcal{C}\right)^2} (f(x_k)- f(\bar{x}))\mathbf{1}_{\mathcal{E}}\right] p \\
		&\le  (f(x_k)-f(\bar{x})) \left(1-\frac{1}{2}\alpha(1-\alpha) \frac{\lambda_0}{4\lambda_{\max}(\bar{H})\left(\sqrt{\frac{n}{s}}+\mathcal{C}\right)^2}p^2\right). 
	\end{align*}
Where the first inequality holds as in any case we have that $f(x_k)-\min\limits_{u \in B_0} \tilde{f}_{x_k}(u) \ge 0$.
	By taking the expectation with respect to $P_0,\cdots,P_{k-1}$ we deduce the corollary.
\endproof

Let consider the following assumption.
\begin{assumption}\label{ass:local2a}
{There exists $\rho>0$ such that for $k$ large enough
	\begin{equation}\label{eq:localer}
		\|\nabla f(x_k)\| \ge \rho \|x_k-\bar{x}\|.
	\end{equation}}
\end{assumption}
Notice that \Cref{ass:local2a} is actually stronger than  \Cref{assumption:local2}$(ii)$.
\begin{lemma}\label{lem:1}
	{We have, under \Cref{assumption:local1} and \Cref{ass:local2a}, that for $k$ large enough:
	$$ \frac{\rho}{2\sqrt{\lambda_{\max}(\bar{H})}} \|x_k-\bar{x}\| \le \|\sqrt{\bar{H}}(x_k-\bar{x})\| .$$}
\end{lemma}
\proof{Proof.}
	Using a Taylor expansion of $t \mapsto \nabla f(\bar{x}+t(x_k-\bar{x}))$ around $0$, we have that 
	\begin{equation}\label{eq:taylorgrad}
			\nabla f(x_k) = \nabla f(\bar{x}) + \int_{0}^{1} \nabla^2f(\bar{x}+t(x_k-\bar{x}))(x_k-\bar{x})dt=\int_{0}^{1} \nabla^2f(\bar{x}+t(x_k-\bar{x}))(x_k-\bar{x})dt.
	\end{equation}
	By \Cref{assumption:local1}, for any $t\in[0,1]$ we have $\|\nabla^2f(\bar{x}+t(x_k-\bar{x}))-\bar{H}\|\le tL_H\|x_k-\bar{x}\|$. Hence we deduce that
	\begin{equation}\label{eq:lem1a}
		\|\nabla f(x_k)\| \le \|\bar{H}(x_k-\bar{x})\| + \|\nabla^2f(\bar{x}+t(x_k-\bar{x}))-\bar{H}\| \|x_k-\bar{x}\|\le \|\bar{H}(x_k-\bar{x})\| + L_H\|x_{k}-\bar{x}\|^2.
	\end{equation}
	Therefore, by \cref{eq:localer}, we deduce that
	\begin{equation}\label{eq:upperbound_dist}
        \rho \|x_k-\bar{x}\| - L_H \|x_k-\bar{x}\|^2 \le \|\nabla f(x_k)\|  - L_H \|x_k-\bar{x}\|^2 \overset{\cref{eq:lem1a}}{\le} \|\bar{H}(x_k-\bar{x})\|.	\end{equation}        
	Since $\|x_k-\bar{x}\|$ tends to $0$, we deduce that for $k$ large enough:
	$$ \frac{\rho}{2} \|x_k-\bar{x}\| \le \|\bar{H}(x_k-\bar{x})\| \le \sqrt{\lambda_{\max}(\bar{H})} \|\sqrt {\bar H}(x_k-\bar{x})\|  .$$
\endproof

Let us now define the semi-norm:
\begin{equation}\label{eq:seminorm}
	\|x\|^2_{\bar H}:= x^\top \bar H x.
\end{equation}
Notice that by Lemma \ref{lem:1}, under \Cref{ass:local2a}, when $k$ is large enough, $\|\cdot\|_{\bar H}$ is a norm for $x_k-\bar{x}$ as we have that $\|x_k-\bar{x}\|_{\bar H}=0$ if and only if $\|x_k-\bar{x}\|=0$.

\begin{proposition}\label{prop:3a} 
{{Assume that \Cref{assumption:local1}, \Cref{assumption:s}}, \Cref{assumption:local2}$(i)$ and \Cref{ass:local2a} hold. Then for $k$ large enough:
	$$\|x_{k+1}-\bar{x}\|_{\bar H}\le \left(\sqrt{1-\frac{\lambda_0}{4\lambda_{\max}(\bar{H})\left(\sqrt{\frac{n}{s}}+\mathcal{C}\right)^2}} \right)\|x_{k}-\bar{x}\|_{\bar H}$$
	holds with probability at least  $1-6(\exp(-s)+\exp(-\frac{\mathcal{C}_0}{4}s))$.}
\end{proposition}

\proof{Proof.}
\begin{align}\label{eq:a}
	&\sqrt{\bar{H}}(x_{k+1}-\bar{x}) = 	\sqrt{\bar{H}}(x_{k+1}-x_k) +	\sqrt{\bar{H}}(x_{k}-\bar{x}) \nonumber\\
	&=	-\sqrt{\bar{H}}P_k^\top(P_kH_k P_k^\top+\eta_kI_s)^{-1}P_kg_k  +	\sqrt{\bar{H}}(x_{k}-\bar{x})  \nonumber\\
	&=  -\sqrt{\bar{H}}P_k^\top(P_kH_k P_k^\top+\eta_k I_s)^{-1}P_kH_k(x_k-\bar{x}) +  \sqrt{\bar{H}}P_k^\top(P_kH_k P_k^\top+\eta_k I_s)^{-1}P_k(g_k -H_k(x_k-\bar{x}))\\ 
	&\ \ \ +\sqrt{\bar{H}}(x_{k}-\bar{x})\nonumber\\
	&= -A+B+\sqrt{\bar{H}}(x_{k}-\bar{x}),
\end{align}
where $A:=\sqrt{\bar{H}}P_k^\top(P_kH_k P_k^\top+\eta_k I_s)^{-1}P_kH_k(x_k-\bar{x})$ and $B:=\sqrt{\bar{H}}P_k^\top(P_kH_k P_k^\top+\eta_k I_s)^{-1}P_k(g_k -H_k(x_k-\bar{x}))$. First let us bound $B$. In order to do so, we bound $\|P_k^\top(P_kH_k P_k^\top+\eta_k I_s)^{-1}P_k\|$. Notice that 
from $P_kH_k P_k^\top\succ 0$, $\eta_k\ge 0$ and  Proposition~\ref{prop:1}, we have
\begin{equation}\label{eq:aa}
  \|P_k^\top(P_kH_k P_k^\top+\eta_k I_s)^{-1}P_k\| \le  \|P_k^\top(P_kH_k P_k^\top)^{-1}P_k\|  \le
 {\frac{\|P_k^\top \|^2 }{\frac{n}{8s}\sigma\bar{\lambda}}}
\end{equation}  
with probability at least $1-6\exp(-s)$.
{
  Therefore, by Lemma \ref{lem:concentration_of_PP}, we have
\begin{equation}\label{eq:aa1}
	\|P_k^\top(P_kH_k P_k^\top+\eta_k I_s)^{-1}P_k\| \le	\frac{8\mathcal{\bar{C}} }{\sigma\bar{\lambda}}.
\end{equation}}

By Taylor expansion at $\bar{x}$ of $\nabla f$, as in \cref{eq:taylorgrad}, and by subtracting $H_k(x_k-\bar{x})$ to both sides, we obtain by \Cref{assumption:local1} that 
\begin{equation}\label{eq:aa2}
	\|g_k -H_k(x_k-\bar{x})\|\le \int_0^1 \|\nabla^2f(\bar{x}+t(x_k-\bar{x}))-\nabla^2 f(\bar{x})\|\|x_k-\bar{x}\|dt=O(\|x_k-\bar{x}\|^2).
\end{equation}
Hence, by \cref{eq:aa} and \cref{eq:aa2}, there exists a constant $\beta_1>0$ such that
\begin{equation}\label{eq:ab}
	B\le\| \sqrt{\bar{H}}\|\|P_k^\top(P_kH_k P_k^\top+\eta_k I_s)^{-1}P_k\|\|(g_k -H_k(x_k-\bar{x}))\| \le \beta_1 \|x_k-\bar{x}\|^2.
\end{equation}

Let us now bound $A=\sqrt{\bar{H}}P_k^\top(P_kH_k P_k^\top+\eta_k I_s)^{-1}P_kH_k(x_k-\bar{x})$. Let us furthermore decompose $A=A_1+A_2$ such that
\begin{align}\label{eq:b}
	 & \sqrt{\bar{H}}P_k^\top(P_kH_k P_k^\top+\eta_k I_s)^{-1}P_kH_k(x_k-\bar{x}) \nonumber\\
  & =   \sqrt{\bar{H}}P_k^\top(P_k\bar H P_k^\top+\eta_k I_s)^{-1}P_k\bar{H}(x_k-\bar{x})  +
\sqrt{\bar{H}}P_k^\top ((P_kH_k P_k^\top+\eta_k I_s)^{-1}P_k(H_k - \bar{H}) (x_k-\bar{x}) .
\end{align}
Notice that by \Cref{assumption:local1}, we have that $\|({\bar H}-{{H_k}})\|$ tends to $0$.
Therefore, we deduce from \cref{eq:aa} and \cref{eq:ab} that 
\begin{equation*}
\|\sqrt{\bar{H}}P_k^\top ((P_kH_k P_k^\top+\eta_k I_s)^{-1}P_kH_k-(P_k\bar H P_k^\top +\eta_k I_s)^{-1}P_k\bar{H}) (x_k-\bar{x})\| =o(\|x_k-\bar{x}\|).
\end{equation*}
Therefore by \cref{eq:a}, \cref{eq:ab} and \cref{eq:b}, we deduce that
$$\sqrt{\bar{H}}(x_{k+1}-\bar{x})=-A+B+\sqrt{\bar{H}}(x_{k}-\bar{x})=-A_1+\sqrt{\bar{H}}(x_{k}-\bar{x})+ o(\|x_k-\bar{x}\|). $$
Hence, by evaluating the norm of $A_2$ as $o(\|x_k-\bar{x}\|)$, we deduce that with probability at least $1-6(\exp(-s)+\exp(-\frac{\mathcal{C}_0}{4}s))$
\begin{equation*}
	\|\sqrt{\bar{H}}(x_{k+1}-\bar{x})\| \le \left\|\left(I_n -\sqrt{\bar{H}}P_k^\top(P_k\bar{H} P_k^\top+\eta_k I_s)^{-1}P_k \sqrt{\bar{H}} \right)\sqrt{\bar{H}}(x_k-\bar{x})\right\| + o(\|x_k-\bar{x}\|).
\end{equation*}
We can write
\begin{align*}
	 \left(I_n -\sqrt{\bar{H}}P_k^\top(P_k\bar{H} P_k^\top+\eta_k I_s)^{-1}P_k \sqrt{\bar H} \right)\sqrt{\bar H}&(x_k-\bar{x})= \left(I_n -\sqrt{\bar{H}}P_k^\top(P_k\bar H P_k^\top)^{-1}P_k \sqrt{\bar H} \right)\sqrt{\bar H}(x_k-\bar{x}) \\
	& -\sqrt{\bar{H}}P_k^\top ((P_k\bar H P_k^\top+\eta_k I_s)^{-1} -(P_k\bar H P_k^\top)^{-1}  )P_k {\bar H}(x_k-\bar{x}) .
\end{align*}
Hence, using the same reasoning as before, we obtain that
\begin{equation}\label{eq:c}
	\|\sqrt{\bar{H}}(x_{k+1}-\bar{x})\| \le \left\|\left(I_n -\sqrt{\bar{H}}P_k^\top(P_k\bar H P_k^\top)^{-1}P_k \sqrt{\bar H} \right)\sqrt{\bar H}(x_k-\bar{x})\right\| + o(\|x_k-\bar{x}\|).
\end{equation}
Notice that $\sqrt{\bar{H}}P_k^\top(P_k\bar H P_k^\top)^{-1}P_k \sqrt{\bar H}$ is an orthogonal projection, hence
$$\left\|\left(I_n -\sqrt{\bar{H}}P_k^\top(P_k\bar H P_k^\top)^{-1}P_k \sqrt{\bar H} \right)\sqrt{\bar H}(x_k-\bar{x})\right\|^2 = \|\sqrt{\bar H}(x_k-\bar{x})\|^2 - \|\sqrt{\bar{H}}P_k^\top(P_k\bar H P_k^\top)^{-1}P_k {\bar H}(x_k-\bar{x}) \|^2. $$
Then similarly to the proof of Proposition \ref{prop:3} and similarly to \cref{eq:align}, we have that with probability at least $1-6(\exp(-s)+\exp(-\frac{\mathcal{C}_0}{4}s))$,
$$ \|\sqrt{\bar{H}}P_k^\top(P_k\bar H P_k^\top)^{-1}P_k {\bar H}(x_k-\bar{x})\|^2 = (x_k-\bar{x})^\top \Pi (x_k-\bar{x}) ,$$
and
$$ \|\sqrt{\bar{H}}P_k^\top(P_k\bar H P_k^\top)^{-1}P_k {\bar H}(x_k-\bar{x}) \|^2 \ge \frac{\lambda_0}{2\lambda_{\max}(P_k\bar{H}P_k^\top)}\|\sqrt{\bar {H}}(x_k-\bar{x})\|^2, $$
where $\lambda_0$ is the first non-zero eigenvalue of $\bar{H}$. Therefore, we have that
$$\left\|\left(I_n -\sqrt{\bar{H}}P_k^\top(P_k\bar H P_k^\top)^{-1}P_k \sqrt{\bar H} \right)\sqrt{\bar H}(x_k-\bar{x})\right\| \le  \sqrt{1-\frac{\lambda_0}{2\lambda_{\max}(P_k\bar{H}P_k^\top)}}\|\sqrt{\bar H}(x_k-\bar{x})\|.$$
Therefore, by \cref{eq:c}, we have that 
$$\|\sqrt{\bar{H}}(x_{k+1}-\bar{x})\| \le \sqrt{1-\frac{\lambda_0}{2\lambda_{\max}(P_k\bar{H}P_k^\top)}}\|\sqrt{\bar H}(x_k-\bar{x})\|+ o(\|x_k-\bar{x}\|).  $$
By Lemma \ref{lem:1}, we have $o(\|x_k-\bar{x}\|)=o(\|\sqrt{\bar{H}}(x_{k}-\bar{x})\|)$, hence we deduce that when $k$ is large enough,
$$\|\sqrt{\bar{H}}(x_{k+1}-\bar{x})\| \le \sqrt{1-\frac{\lambda_0}{4\lambda_{\max}(P_k\bar{H}P_k^\top)}}\|\sqrt{\bar H}(x_k-\bar{x})\|.  $$
We complete the proof using \cref{eq:PHP}.
\endproof

\subsection{Impossibility of local super-linear convergence in general}\label{sec:local2}

In this section we will prove that when $f$ is strongly convex locally around the strict local minimizer $\bar{x}$, we cannot aim, with high probability, at local super-linear convergence using random subspace. More precisely, the goal of this section is to prove that there exists a constant $c>0$ such that when $k$ is large enough, we have that with probability $1-2\exp(-\frac{\mathcal{C}_0}{4})-2\exp(-s)$,
$$\|x_{k+1}-\bar{x}\| \ge c \|x_k-\bar{x}\| .$$
From that, we will easily deduce that there exists a constant $c'$ such that
$$f(x_{k+1}) -f(\bar{x}) \ge c' (f(x_k) -f(\bar{x}))$$
holds with high probability when $k$ is large enough. This will prove that the results obtained in the previous section are optimal when $f$ is locally strongly-convex.
Indeed, by local strong-convexity of $f$ and Hessian Lipschitz continuity (i.e. \Cref{assumption:local1}), there exists $l_2\ge l_1>0$ such that for $k$ large enough,
$$ l_1\|x_k-\bar{x}\|^2 \le f(x_k)-f(\bar{x}) \le l_2 \|x_k-\bar{x}\|^2.$$
This immediately proves the existence of the constant $c'$ described above. In this subsection we make the following additional assumption.

\begin{assumption}\label{assumption:local3}
	We assume that
	$$(\mathcal{C}+2)^2s<n,$$
        where $\mathcal{C}$ is the constant that appears in \cref{eq:sv_PT}. 
\end{assumption}

We recall here that for all $k$:
$$x_{k+1}=x_k-t_kP_k^\top((P_k\nabla^2 f(x_k)P_k^\top)+\eta_kI_s)^{-1}P_k \nabla f(x_k) ,$$
where $t_k$ is the step-size  and $\eta_k>0$ is a parameter that tends to $0$ when $k$ tends to infinity.

Let us fix $k$. Using a Taylor expansion of $t\mapsto\nabla f(\bar{x}+t(x_{k+1}-\bar{x}))$ around $0$, as in \cref{eq:taylorgrad}, we have that 
\begin{equation}\label{eq:0}
	\|\nabla f(x_{k+1})\| \le \int_{0}^{1} \|\nabla^2f(\bar{x}+t(x_{k+1}-\bar{x}))\| \|x_{k+1}-\bar{x}\|dt\le \int_{0}^{1} 2\lambda_{\max}(\nabla^2f(\bar x))\|x_{k+1}-\bar{x}\|dt,
\end{equation}
where $\lambda_{\max}(\cdot)$ denotes the largest eigenvalue, and the second inequality holds for $k$ large enough under \Cref{assumption:local1}.
 Hence, for $k$ large enough and under \Cref{assumption:local1},
\begin{equation}\label{eq:lowbound_dist}
  \|x_{k+1}-\bar{x}\| \ge \frac{1}{2\lambda_{\max}(\nabla^2f(\bar x))}\|\nabla f(x_{k+1})\|
\end{equation}
holds. 
Using a Taylor expansion of $\nabla f$ around $x_k$, we have that 
$$\nabla f(x_{k+1}) = \nabla f({x_{k}}) + \int_{0}^{1} \nabla^2f({x_k}+t(x_{k+1}-{x_k}))(x_{k+1}-{x_k})dt.$$
Hence, 
$$\nabla f(x_{k+1})=\nabla f(x_k)+\nabla^2f({x}_k)(x_{k+1}-x_k)+ \int_{0}^{1} (\nabla^2f({x_k}+t(x_{k+1}-{x_k}))-\nabla^2f({x}_k))(x_{k+1}-x_k) .$$
We deduce therefore that
$$\|\nabla f(x_{k+1})\| \ge \|\nabla f(x_k)+\nabla^2f({x}_k)(x_{k+1}-x_k)\| - \int_{0}^{1} \|(\nabla^2f({x_k}+t(x_{k+1}-{x_k}))-\nabla^2f({x}_k))(x_{k+1}-x_k)\| . $$
By \Cref{assumption:local1}, the Hessian is $L_H$-Lipschitz in $B_H$. Since $x_k$ and ${x}_k+t(x_{k+1}-x_k) \in B_H$ for $k$ large enough, we have  that
for $t \leq 1$,
$$\|(\nabla^2f({x_k}+t(x_{k+1}-{x_k}))-\nabla^2f({x}_k))(x_{k+1}-x_k))\|\le L_H  \|x_{k+1}-x_k\|^2.$$
Hence \cref{eq:lowbound_dist} leads to  
\begin{equation}\label{eq:1}
	\|x_{k+1}-\bar{x}\| \ge \frac{1}{2\lambda_{\max}(\nabla^2f(\bar x))} \left(  \|g_k+H_k(x_{k+1}-x_k)\| -  L_H \|x_{k+1}-x_k\|^2  \right).
\end{equation}

\begin{proposition}\label{prop:4}
	Assume that \Cref{assumption:local1} and \Cref{assumption:local3} hold and that $f$ is strongly convex locally around $\bar{x}$. There exists a constant $\beta>0$ such that if $k$ is large enough, then with probability at least $1-2\exp(-\frac{\mathcal{C}_0}{4}s)-2\exp(-s)$, we have
	$$\|g_k+H_k(x_{k+1}-x_k)\| \ge \beta \|x_{k+1}-x_k\|. $$
\end{proposition}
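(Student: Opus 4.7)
The plan is to prove two estimates: a lower bound $\|g_k + H_k \Delta_k\| \ge c\,\|x_k - \bar{x}\|$ and an upper bound $\|\Delta_k\| \le C_1\,\|x_k - \bar{x}\|$, where $\Delta_k := x_{k+1} - x_k = t_k d_k$. Their combination will give the proposition with $\kappa = c/C_1$.

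The crucial algebraic identity is that since $d_k \in \Range(P_k^\T)$, so does $\Delta_k$, and hence the orthogonal projector $\Pi_k^\perp := I_n - P_k^\T (P_k P_k^\T)^{-1} P_k$ onto $\Ker(P_k)$ satisfies $\Pi_k^\perp \Delta_k = 0$, which yields
\[
\Pi_k^\perp (x_{k+1} - \bar{x}) = \Pi_k^\perp(x_k - \bar{x}).
\]
Because $x_k$ is independent of $P_k$, I can treat $x_k - \bar{x}$ as a fixed vector. Applying \cref{lem:JLL_random_matrix} with $\varepsilon = 1/2$ together with \eqref{eq:sv_PT}, and using \cref{assumption:local3} (which ensures $(\sqrt{n/s} - \mathcal{C})^2 > 4$), I bound
\[
\|\Pi_k(x_k - \bar{x})\|^2 \le \frac{\|P_k(x_k - \bar{x})\|^2}{\sigma_{\min}(P_k)^2} \le \frac{3/2}{4}\,\|x_k - \bar{x}\|^2,
\]
which gives $\|\Pi_k^\perp(x_k - \bar{x})\| \ge \sqrt{5/8}\,\|x_k - \bar{x}\|$ with probability at least $1 - 2\exp(-\mathcal{C}_0 s/4) - 2\exp(-s)$.

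Next, a Taylor expansion of $g_k$ and $H_k$ around $\bar{x}$, using $\nabla f(\bar{x}) = 0$ and the Hessian Lipschitz property in \cref{assumption:local1}, yields
\[
g_k + H_k \Delta_k = \bar{H}(x_{k+1} - \bar{x}) + E_k, \qquad \|E_k\| \le L_H\,\|x_k - \bar{x}\|\,(\|x_k - \bar{x}\| + \|\Delta_k\|).
\]
Local strong convexity supplies $\bar{H} \succeq \mu I_n$ for some $\mu > 0$, and since $\|v\| \ge \|\Pi_k^\perp v\|$,
\[
\|\bar{H}(x_{k+1} - \bar{x})\| \ge \mu\,\|\Pi_k^\perp(x_{k+1} - \bar{x})\| = \mu\,\|\Pi_k^\perp(x_k - \bar{x})\| \ge \mu\sqrt{5/8}\,\|x_k - \bar{x}\|.
\]
On the other hand, Lipschitz continuity of $\nabla f$ gives $\|g_k\| \le L\|x_k - \bar{x}\|$; combined with $\lambda_{\min}(M_k) \ge (\mu/2)\sigma_{\min}(P_k)^2$ (valid for $k$ large since $H_k \succeq (\mu/2) I_n$ by continuity) and $\|P_k\|^2 \le (\sqrt{n/s} + \mathcal{C})^2$ from \eqref{eq:sv_PT}, this yields $\|\Delta_k\| \le C_1\,\|x_k - \bar{x}\|$ for an explicit constant $C_1$. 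Assembling the estimates,
\[
\|g_k + H_k \Delta_k\| \ge \mu\sqrt{5/8}\,\|x_k - \bar{x}\| - L_H(1 + C_1)\|x_k - \bar{x}\|^2 \ge \tfrac{\mu\sqrt{5/8}}{2}\,\|x_k - \bar{x}\|
\]
for $k$ large enough, and dividing by $C_1$ concludes the argument with $\kappa = \mu\sqrt{5/8}/(2C_1)$.

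The main obstacle is the probabilistic step in the second paragraph: although the identity $\Pi_k^\perp(x_{k+1}-\bar{x}) = \Pi_k^\perp(x_k-\bar{x})$ is elementary once one notices $\Delta_k \in \Range(P_k^\T)$, lower-bounding $\|\Pi_k^\perp(x_k - \bar{x})\|$ by a positive multiple of $\|x_k - \bar{x}\|$ requires combining the Johnson--Lindenstrauss concentration of $\|P_k v\|$ with the smallest-singular-value bound in \eqref{eq:sv_PT}, and it is precisely to make these two bounds compatible---so that $\sigma_{\min}(P_k)^2$ exceeds the JL stretching factor $1 + \varepsilon$---that \cref{assumption:local3} is imposed.
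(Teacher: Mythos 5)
Your proof is correct, but it follows a genuinely different route from the paper's. The paper stays entirely at the level of the update operator: it writes $g_k+H_k(x_{k+1}-x_k)=(I_n-t_kH_kP_k^\T M_k^{-1}P_k)g_k=H_k(H_k^{-1}-t_kP_k^\T M_k^{-1}P_k)g_k$, expands the squared norm of the second factor, and controls the cross term via the oblique decomposition $H_k^{-1}g_k=P_k^\T z_1+z_2$ with $P_kz_2=0$, bounding $\|P_k^\T z_1\|\le \tfrac{2}{\sqrt{n/s}-\mathcal{C}}\|H_k^{-1}g_k\|$ by combining \cref{lem:JLL_random_matrix} with $\|P_k^\T(P_kP_k^\T)^{-1}\|=\sigma_{\min}(P_k^\T)^{-1}$; \cref{assumption:local3} then makes the coefficient $1-\tfrac{2}{\sqrt{n/s}-\mathcal{C}}$ positive, and the $\|t_kP_k^\T M_k^{-1}P_kg_k\|^2=\|x_{k+1}-x_k\|^2$ term is retained directly, giving $\kappa=\sqrt{1-\tfrac{2}{\sqrt{n/s}-\mathcal{C}}}\big/(2\|\bar H^{-1}\|)$ with no detour through $\|x_k-\bar x\|$. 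You instead Taylor-expand around $\bar x$ so that $g_k+H_k\Delta_k\approx \bar H(x_{k+1}-\bar x)$, and exploit the geometric fact that the $\Ker(P_k)$-component of the error is frozen by the update, so that with high probability a constant fraction of $\|x_k-\bar x\|$ survives; you then need the extra (but easy) bounds $\|\Delta_k\|\le C_1\|x_k-\bar x\|$ and $\|g_k\|\le L\|x_k-\bar x\|$ to convert back to $\|\Delta_k\|$. Both arguments consume the same probabilistic ingredients (Johnson--Lindenstrauss applied to a single vector independent of $P_k$, plus the singular-value bounds \eqref{eq:sv_PT}) and give the stated probability; the paper's version is shorter and yields a cleaner constant, while yours makes the obstruction to superlinear convergence more transparent --- the component of $x_k-\bar x$ in $\Ker(P_k)$ cannot be reduced in one step --- and uses \cref{assumption:local3} only to guarantee $\sigma_{\min}(P_k)^2>3/2$, a slightly weaker numerical requirement than the paper's $\sqrt{n/s}-\mathcal{C}>2$. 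One small point worth stating explicitly in your write-up: $\sigma_{\min}(P_k)$ must be read as the smallest of the $s$ nonzero singular values (which is what \eqref{eq:sv_PT} controls), so that $\|\Pi_k v\|\le\sigma_{\min}(P_k^\T)^{-1}\|P_kv\|$ is legitimate; this is the same identity the paper uses.
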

\proof{Proof.}
	Recalling the updated rule $x_{k+1}=x_k -  t_k P_k^\T M_k^{-1} P_k g_k$ in \Cref{alg:RSN}, we have
	$$\|g_k+H_k(x_{k+1}-x_k)\|=\|(I_n-t_kH_kP_k^\top M_k^{-1}P_k)g_k\|,$$
	where $M_k$ is defined in \cref{def:Mk}. If $k$ is large enough, $H_k$ is invertible by strong convexity of $f$. Notice that $ \|(I_n-t_kH_kP_k^\top M_k^{-1}P_k)g_k\|=\|H_k(H_k^{-1}-t_kP_k^\top M_k^{-1}P_k)g_k\|$. Hence since for any invertible matrix $A$ we have $\|Ax\|\ge \frac{\|x\|}{\|A^{-1}\|}$, we deduce that
	$$\|(I_n-t_kH_kP_k^\top M_k^{-1}P_k)g_k\| \ge \frac{1}{\|H_k^{-1}\|}\| (H_k^{-1}-t_kP_k^\top M_k^{-1}P_k)g_k \|.$$
	Furthermore, we have 
	\begin{align}\label{eq:prop4a}
		&\|(H_{k}^{-1}-t_kP_k^\top M_k^{-1}P_k)g_k\|^2\\
		&=\|H_k^{-1}g_k\|^2+\|t_kP_k^\top M_k^{-1}P_kg_k\|^2-2\langle H_k^{-1}g_k, t_kP_k^\top M_k^{-1}P_k g_k \rangle. \nonumber
	\end{align}
	Let $H_k^{-1}g_k=P_k^\top z_1+z_2$ be the orthogonal decomposition of $H_k^{-1}g_k$ on $\Img(P_k^\top)$ parallel to $\Ker(P_k)$. Since $P_kz_2=0$, we have 
	$$\langle H^{-1}_kg_k, t_kP_k^\top M_k^{-1}P_k g_k \rangle=\langle P_k^\top z_1, t_kP_k^\top M_k^{-1}P_k g_k \rangle .$$
	Hence, by \cref{eq:prop4a}, we deduce that
	\begin{align}\label{eq:prop4b}
		&\|(H_{k}^{-1}-t_kP_k^\top M_k^{-1}P_k)g_k\|^2 \\
		&\ge \|H_k^{-1}g_k\|^2+\|t_kP_k^\top M_k^{-1}P_kg_k\|^2 -2 \|P_k^\top z_1\|\|t_k P_k^\top M_k^{-1}P_k g_k\|. \nonumber
	\end{align}
	Since $H^{-1}_kg_k=P_k^\top z_1+z_2$ with $P_kz_2=0$, we have that $P_kH^{-1}_kg_k=P_kP_k^\top z_1$. Which implies (since $P_kP_k^\top $ is invertible with probability $1$) that $z_1=(P_kP_k^\top)^{-1}P_kH^{-1}_kg_k$. Hence
	$$\|P_k^\top z_1\|=\|P_k^\top(P_kP_k^\top)^{-1}P_kH^{-1}_kg_k\|\le \|P_k^\top(P_kP_k^\top)^{-1}\| \|P_kH^{-1}_kg_k\|. $$
	By Lemma \ref{lem:JLL_random_matrix}, we have that with probability at least $1-2\exp(-\frac{\mathcal{C}_0}{4}s)$ that
	$ \|P_kH^{-1}_kg_k\|\le 2\|H^{-1}_kg_k\|$. Furthermore, by writing the singular value decomposition, $U\Sigma V^\top$, of $P_k^\top$, we have that ~$\|P_k^\top(P_kP_k^\top)^{-1}\|=\|U\Sigma^{-1}V^\top\| =\frac{1}{\sigma_{\min}(P_k^\top)}$. Since  
	 $\sigma_{\min}(P_k^\top)\ge \sqrt{\frac{n}{s}}-\mathcal{C}$ holds with probability at least $1-2e^{-s}$ (we only consider the first equation of \cref{eq:sv_PT}), we deduce that 
	$$\|P_k^\top z_1\| \le \frac{2}{\sqrt{\frac{n}{s}}-\mathcal{C}}\|H_k^{-1}g_k\|.$$
	Hence, from \cref{eq:prop4b} we have
	\begin{align}\label{eq:prop4c}
		&\|(H_k^{-1}-t_kP_k^\top M_k^{-1}P_k)g_k\|^2 \\
		&\ge \|H_k^{-1}g_k\|^2+\|t_kP_k^\top M_k^{-1}P_kg_k\|^2 - \frac{4}{\sqrt{\frac{n}{s}}-\mathcal{C}}\|H_k^{-1}g_k\|\|t_kP_k^\top M_k^{-1}P_k g_k\| \nonumber \\
		& \ge \left(1-\frac{2}{\sqrt{\frac{n}{s}}-\mathcal{C}}\right)\|H_k^{-1}g_k\|^2 + \left(1-\frac{2}{\sqrt{\frac{n}{s}}-\mathcal{C}}\right) \|t_kP_k^\top M_k^{-1}P_kg_k\|^2, \nonumber
	\end{align}
	where we used that $2ab \le a^2+b^2$ in the last inequality, and that $\left(1-\frac{2}{\sqrt{\frac{n}{s}}-\mathcal{C}}\right)>0$ holds by \Cref{assumption:local3}. Hence, from \eqref{eq:prop4c} we proved that
	\begin{align*}
		&\|(I_n-t_kH_kP_k^\top M_k^{-1}P_k)g_k\|^2\ge \frac{1}{\|H_k^{-1}\|^2}\left(1-\frac{2}{\sqrt{\frac{n}{s}}-\mathcal{C}}\right) \|t_kP_k^\top M_k^{-1}P_kg_k\|^2 \\
		&=  \frac{1}{\|H_k^{-1}\|^2}\left(1-\frac{2}{\sqrt{\frac{n}{s}}-\mathcal{C}}\right)\|x_{k+1}-x_k\|^2. 
	\end{align*}
	That is
	\begin{equation}\label{eq:2}
		\|g_k+H_k(x_{k+1}-x_k)\| \ge  \frac{\sqrt{1-\frac{2}{\sqrt{\frac{n}{s}}-\mathcal{C}}}}{\|H_k^{-1}\|}\|x_{k+1}-x_k\|.
	\end{equation}
	Considering $k$ large enough, as $x_k$ tends to $\bar{x}$, we can bound, using \Cref{assumption:local1}, $\frac{1}{\|H_k^{-1}\|}\ge \frac{1}{2\|{\bar{H}}^{-1}\|}$, where we recall that $\bar{H}=\nabla^2 f(\bar{x})$, which ends the proof.
\endproof

\begin{theorem}\label{thm:3}
	Assume that \Cref{assumption:local1} and \Cref{assumption:local3} hold and that $f$ is locally strongly convex around $\bar{x}$. There exists a constant $c>0$ such that for $k$ large enough, 
	$$\|x_{k+1}-\bar{x}\| \ge c \|x_k-\bar{x}\| $$
	holds with probability at least $1-2\exp(-\frac{\mathcal{C}_0}{4}s)-2\exp(-s)$.
\end{theorem}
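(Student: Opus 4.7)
The plan is to combine the Taylor-expansion inequality \eqref{eq:1} with \cref{prop:4} and then handle the residual quadratic term by a triangle-inequality dichotomy. All the probabilistic content has already been gathered into \cref{prop:4}, so the remainder is deterministic bookkeeping valid for $k$ large enough.

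First, I would note that by local strong convexity of $f$ together with \cref{assumption:local1}, the Hessian $\nabla^2 f$ is continuous and positive definite in a neighborhood of $\bar x$, so there is a constant $M>0$ with $\lambda_{\max}(\nabla^2 f(x))\le M$ for all $x$ close enough to $\bar x$. In particular, for $k$ large, $x^*_{k+1}\in[\bar x,x_{k+1}]$ lies in that neighborhood and $\lambda_{\max}(\nabla^2 f(x^*_{k+1}))\le M$. Combining \eqref{eq:1} with \cref{prop:4} then gives, with probability at least $1-2\exp(-\tfrac{\mathcal C_0}{4}s)-2\exp(-s)$,
\begin{equation*}
\|x_{k+1}-\bar x\|\ \ge\ \frac{1}{M}\bigl(\kappa\|x_{k+1}-x_k\|-L_H\|x_{k+1}-x_k\|^2\bigr).
\end{equation*}
Since $\{x_k\}\to\bar x$, we have $\|x_{k+1}-x_k\|\to 0$, so for $k$ large enough $L_H\|x_{k+1}-x_k\|\le\kappa/2$, which yields
\begin{equation*}
\|x_{k+1}-\bar x\|\ \ge\ \frac{\kappa}{2M}\,\|x_{k+1}-x_k\|.
\end{equation*}

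Second, I would close the argument by a simple dichotomy. Set $c:=\min\bigl(\tfrac12,\tfrac{\kappa}{4M}\bigr)$. If $\|x_{k+1}-\bar x\|\ge\tfrac12\|x_k-\bar x\|$ we are immediately done. Otherwise the triangle inequality gives
\begin{equation*}
\|x_{k+1}-x_k\|\ \ge\ \|x_k-\bar x\|-\|x_{k+1}-\bar x\|\ \ge\ \tfrac12\|x_k-\bar x\|,
\end{equation*}
and the displayed bound above yields $\|x_{k+1}-\bar x\|\ge\tfrac{\kappa}{4M}\|x_k-\bar x\|$. In either case, $\|x_{k+1}-\bar x\|\ge c\|x_k-\bar x\|$, which is the claim.

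The main obstacle has already been absorbed into \cref{prop:4}: the real work is showing that the Newton-type step cannot reduce $\|g_k+H_k(x_{k+1}-x_k)\|$ faster than linearly in $\|x_{k+1}-x_k\|$ when the step is restricted to a random subspace, and this is where \cref{assumption:local3} (which makes $1-2/(\sqrt{n/s}-\mathcal C)>0$) is essential. Once \cref{prop:4} is available, the remainder of the theorem is a short deterministic argument, and the corollary on function values, $f(x_{k+1})-f(\bar x)\ge c'(f(x_k)-f(\bar x))$, follows from local strong convexity and Lipschitz Hessian via $l_1\|x_k-\bar x\|^2\le f(x_k)-f(\bar x)\le l_2\|x_k-\bar x\|^2$ with $c'=c^2 l_1/l_2$, as already pointed out in the paragraph preceding \cref{assumption:local3}.
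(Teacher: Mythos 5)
Your proof is correct, and while the first half coincides with the paper's (combine \eqref{eq:1} with \cref{prop:4}, absorb the quadratic term $L_H\|x_{k+1}-x_k\|^2$ for $k$ large, and bound $\lambda_{\max}(\nabla^2 f(x^*_{k+1}))$ by a constant near $\bar{x}$), you close the argument by a genuinely different and more elementary route. The paper proceeds by lower-bounding the step length itself: it writes $\|x_{k+1}-x_k\|=\|t_kP_k^\top M_k^{-1}P_kg_k\|$ and uses the singular-value bound \eqref{eq:sv_PT}, \cref{lem:JLL_random_matrix} applied to $P_kg_k$, and \cref{lem:concentration_of_PP} to control $M_k^{-1}$, arriving at $\|x_{k+1}-\bar{x}\|\ge\kappa_2\|g_k\|$, and then invokes $\|g_k\|\ge\lambda_{\min}(\nabla^2 f(x_k^*))\|x_k-\bar{x}\|$ from local strong convexity. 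Your triangle-inequality dichotomy (either $\|x_{k+1}-\bar x\|\ge\tfrac12\|x_k-\bar x\|$ outright, or $\|x_{k+1}-x_k\|\ge\tfrac12\|x_k-\bar x\|$) bypasses all of this random-matrix bookkeeping: it invokes no probabilistic events beyond those already packaged in \cref{prop:4}, so the stated probability bound is matched with room to spare, and it sidesteps the somewhat delicate chain of estimates in the paper's \eqref{eq:thm3a}--\eqref{eq:thm3c}. What the paper's longer route buys is the intermediate inequality $\|x_{k+1}-\bar{x}\|\ge\kappa_2\|g_k\|$, which relates the new distance to the current gradient norm and could be of independent interest, but for the theorem as stated your argument is shorter and at least as strong.
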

\proof{Proof.}
From \cref{eq:1} and Proposition \ref{prop:4} we deduce that with probability at least $1-2\exp(-\frac{\mathcal{C}_0}{4}s)-2\exp(-s)$, when $k$ is large enough
$$ \|x_{k+1}-\bar{x}\| \ge \frac{1}{2\lambda_{\max}(\nabla^2f(\bar x))} \left( \beta -  L_H \|x_{k+1}-x_k\|  \right)\|x_{k+1}-x_k\|.$$
Since $\beta >0$, we have that for $k$ large enough so as to yield $L_H \|x_{k+1}-x_k\| \leq \beta/2$,
$$ \|x_{k+1}-\bar{x}\| \ge \frac{1}{2\lambda_{\max}(\nabla^2f(\bar x))}  \frac{\beta}{2} \|x_{k+1}-x_k\|.$$
Hence
\begin{equation}\label{eq:lowbound_dist1}
  \|x_{k+1}-\bar{x}\| \ge \frac{\beta}{4\lambda_{\max}(\bar{H})} \|x_{k+1}-x_k\|.
\end{equation} 
Since $f$ is assumed to be strongly convex, 
for all $\alpha \in (0,1)$, as $g_k^\T d_k\le 0$. Hence we have that $t_k=1$Now we notice that 
\begin{equation}\label{eq:thm3a}
	\|x_{k+1}-x_k\|=t_k\|P_k^\top M_k^{-1}P_kg_k\|\ge t_k \sigma_{\min}(P_k^\top)\|M_k^{-1}\|\|P_k g_k\|.
\end{equation}
Using Lemma \ref{lem:JLL_random_matrix} (with $\varepsilon=1/2$) and the bound  \cref{eq:sv_PT} on $\sigma_{\min}(P_k^\top)$, we have that
\begin{equation}\label{eq:thm3b}
	t_k\sigma_{\min}(P_k^\top)\|M_k^{-1}\|\|P_k g_k\| \ge t_k\left(\sqrt{\frac{n}{s}}-\mathcal{C}\right) \|M_k^{-1}\| \frac{1}{2}\|g_k\|.
\end{equation}
Since $x_k$ converges to $\bar{x}$ and the Hessian is Lipschitz continuous, we have that $H_k$ converges to $\bar{H}$. Therefore, when $k$ is large enough, we have $\|M_k^{-1}\| \ge \frac{1}{2}\|(P_k\bar{H}P_k^\top)^{-1}\|=\frac{1}{2}\|{\bar{M}}^{-1}\|$,
where $\bar{M}:=P_k\bar{H}P_k^\top$.
Since 
$$0 \prec \bar{M} \preceq \lambda_{\max}(\bar{H})P_kP_k^\top ,$$
we deduce by Lemma \ref{lem:concentration_of_PP}
\begin{equation}\label{eq:thm3c}
	\|M_k^{-1}\| \ge \frac{1}{2\mathcal{C}\lambda_{\max}(\bar{H})\frac{n}{s}}.
\end{equation}
Hence, by \cref{eq:lowbound_dist1,eq:thm3a,eq:thm3b,eq:thm3c} we have that there exists a constant $\kappa_2>0$ such that 
$$\|x_{k+1}-\bar{x}\| \ge \kappa_2 \|g_k\| .$$
By \cref{eq:taylorgrad} we have that
$$g_k = \bar{H}(x_k-\bar{x}) + \int_0^1(\nabla^2f(\bar{x}+t(x_k-\bar{x}))-\bar{H})(x_k-\bar{x})dt.$$
Hence, since $f$ is assumed to be locally strongly convex, by  \Cref{assumption:local1} we have that for $k$ large enough:
$$\|g_k\|\ge \frac{\lambda_{\min}(\bar{H})}{2}\|x_k-\bar{x}\|. $$
Using \eqref{eq:localt_k}, we have
$$ f(x_k) - f(x_k+t_k'd_k) + \alpha t_k' g_k^\T d_k
\ge  \frac{\bar{\mathcal{C}}n}{2s} L_H {t_k'}^2\norm{d_k} \left(\frac{c_2s\norm{g_k}^\gamma}{\bar{\mathcal{C}}L_H n\norm{d_k}} - t_k'  \right) \norm{M_k^{-1} P_kg_k}^2,$$
and since $f$ is assume to be strongly convex,  $\frac{\|g_k\|^{\gamma}}{\|d_k\|}$ is in the order of $\mathcal{O}(\frac{1}{\|g_k\|^{1-\gamma}})$, hence $t_k$ is bounded below by some constant for $k$ large enough.
Hence we have for $k$ large enough that
$$\|x_{k+1}-\bar{x}\| \ge \frac{1}{2}\kappa_2 \lambda_{\min} (\bar{H})\|x_k-\bar{x}\|,$$
which concludes the proof.
\endproof

We have the following deterministic corollary:
\begin{corollary}\label{cor:3}
	Assume that \Cref{assumption:local1} and \Cref{assumption:local3} hold and that $f$ is locally strongly convex around $\bar{x}$. Then for $k$ large enough, 
	$$\mathbb{E}(\|x_{k+1}-\bar{x}\| ) \ge \bar{c}\mathbb{E}(\|x_k-\bar{x}\|) ,$$
	where $\bar{c}=(1-2\exp(-\frac{\mathcal{C}_0}{4}s)-2\exp(-s))c$ ($c$ is the same constant as in Theorem \ref{thm:3}), and where the expectation is taken with respect to the random variables $P_0,\cdots,P_k$.
\end{corollary}
\proof{Proof.}
	The proof is very similar to the proof of Corollary \ref{cor:2}. Let us consider the random variable $\mathbb{E}[\|x_{k+1}-\bar{x}\|\ |\ P_0,\cdots,P_{k-1}]$. Let $\mathcal{E}=\{\|x_{k+1}-\bar{x}\| \ge \bar{c}\|x_k-\bar{x}\|\ |\ x_k\}$ be an event with respect to the random variable $P_k$. Using the fact that $\|x_{k+1}-\bar{x}\|\ge0,$ we obtain that
	\begin{align*}
		\mathbb{E}\left[\|x_{k+1}-\bar{x}\|\ |\ P_0,\cdots,P_{k-1}\right] &= \mathbb{E}\left[\|x_{k+1}-\bar{x}\|\ |\ P_0,\cdots,P_{k-1}, {\mathcal{E}} \right] P(\mathcal{E})\\
		& \ \ \ +\mathbb{E}\left[\|x_{k+1}-\bar{x}\|\ |\ P_0,\cdots,P_{k-1},{\bar{\mathcal{E}}} \right] (1-P(\mathcal{E})) \\
		&\ge \bar{c}\|x_{k}-\bar{x}\|
	\end{align*}
	Taking the expectation with respect to $P_0,\cdots,P_{k-1}$ leads to the result.
\endproof

{\subsection{The rank deficient case}\label{sec:lowrank}
Previously we proved that when $f$ is locally strongly convex, super-linear convergence cannot hold for RS-RNM. Here we prove that when the Hessian $\bar{H}$ at the local optimum $\bar{x}$ is rank deficient, then RS-RNM can achieve super-linear convergence. 
In this whole subsection, we assume that \Cref{assumption:local1} and \Cref{ass:local2a} are satisfied.
We also denote by $r$ ($<n$) the rank of $\bar{H}$.
{
Notice that, as a special case of $r<n$, one can consider
``functions with low dimensionality''\footnote{They are also called objectives with ``active subspaces" \cite{constantine2014active}, or ``multi-ridge" \cite{fornasier2012learning}. } \cite{wang2016bayesian}. For such functions, there exists a projection matrix $\Pi \in \mathbb{R}^{n\times n}$ with $\mathrm{rank}(\Pi) < n$ such that
\begin{equation} \label{eq:lowdim} \textstyle
	 \forall x \in \mathbb{R}^n, ~f(x) = f(\Pi x).
\end{equation}
Such functions are frequently encountered in many applications. For example, 
the loss functions of neural networks often have low rank Hessians \cite{gur2018gradient,sagun2017empirical,papyan2018full}. This phenomenon is also prevalent in other areas such as hyper-parameter optimization for neural networks \cite{bergstra2012random}, heuristic algorithms for combinatorial optimization problems \cite{hutter2014efficient}, complex engineering and physical simulation problems as in climate modeling \cite{knight2007association}, and policy search \cite{frohlich2019bayesian}.}
}

We first prove the following lemma which is very similar to \Cref{lem:1}.
\begin{lemma}\label{lem:1a}
{	We have, under \Cref{assumption:local1} and \Cref{ass:local2a}, that for $k$ large enough:
	$$ \frac{\rho}{2} \|x_k-\bar{x}\| \le \|{\bar{H}}(x_k-\bar{x})\| .$$
	Furthermore,
	$$\|g_k\| \le 2\lambda_{\max}(\bar{H})\|x_k-\bar{x}\|. $$}
\end{lemma}
\proof{Proof.}
As in the proof of Lemma~\ref{lem:1}, we have \eqref{eq:upperbound_dist}, i.e.,
$$\rho \|x_k-\bar{x}\| - L_H \|x_k-\bar{x}\|^2 
{\le} \|\bar{H}(x_k-\bar{x})\|.$$
	Since $\|x_k-\bar{x}\|$ tends to $0$, we deduce that for $k$ large enough:
	$$ \frac{\rho}{2} \|x_k-\bar{x}\| \le \|\bar{H}(x_k-\bar{x})\|.$$
	The other inequality is  easy to deduce from \cref{eq:taylorgrad}, as in \cref{eq:0}:
	\begin{equation}\label{eq:upper_gnorm}
	\|g_k\| \le  \|\bar{H}\|\|x_k-\bar{x}\| + L_H \|x_k-\bar{x}\|^2 \le 2\lambda_{\max}(\bar{H}) \|x_k-\bar{x}\| ,
	\end{equation}        
	 when $k$ is large enough such that $L_H \|x_k-\bar{x}\| \le \lambda_{\max}(\bar{H})$ holds. 
\endproof
The next lemma is the key to prove super-linear convergence. Notice that since $s\ge r$, we have that with probability one $\sigma_{\min}(P_k^1)>0$. 
\begin{lemma}\label{lem:2}
{	Under \Cref{assumption:local1} and \Cref{ass:local2a}. If $s\ge r$,
	we have that for $k$ large enough, with probability at least $1-2\exp(-s)$:
	$$\|P_k g_{k+1}\| \ge  \rho\frac{\sigma_{\min}(P_k^1)}{8\lambda_{\max}(\bar{H})}\|g_{k+1}\|, $$
	where $P_k^1\in \mathbb{R}^{s \times r}$ is an $s \times r$ i.i.d. Gaussian matrix having the same distribution with $P_k$.}
\end{lemma}
\proof{Proof.}
	By \cref{eq:taylorgrad} applied at $k+1$, we have that
	$$ \nabla f(x_{k+1}) =\int_{0}^{1} \nabla^2f(\bar{x}+t(x_{k+1}-\bar{x}))(x_{k+1}-\bar{x})dt.$$
	Hence,
	$$P_k g_{k+1} = P_k\bar{H}(x_{k+1}-\bar{x}) + \int_{0}^{1} P_k(\nabla^2f(\bar{x}+t(x_{k+1}-\bar{x}))-\bar{H})(x_{k+1}-\bar{x}),$$
which leads to
	\begin{equation}\label{eq:c1}
		\|P_k g_{k+1}\| \ge \|P_k\bar{H}(x_{k+1}-\bar{x})\| - L_H\|P_k\|\| x_{k+1}-\bar{x}\|^2.
	\end{equation}
	Let $UDU^\top =\bar{H}$ be the diagonal decomposition of $\bar{H}$. Since $\bar{x}$ is a strict local minimizer, by \Cref{ass:local2a}, for $k$ large enough, $U$ is an orthogonal matrix independent of $P_k$, and hence, $\tilde{P}_k:=P_kU$ is an i.i.d. random Gaussian matrix with the same distribution as $P_k$. Let $y_{k+1}=U^\top (x_{k+1}-\bar{x})$. We have that 
	\begin{equation}\label{eq:aa1b}
		\bar{H}(x_{k+1}-\bar{x})=UDy_{k+1}\quad \mbox{ and thus, } 	\quad P_k\bar{H}(x_{k+1}-\bar{x})=\tilde{P}_kDy_{k+1}.
	\end{equation}
	Furthermore, since $D$ has rank $r<n$, we can write $Dy_{k+1}=\begin{pmatrix}
		z_{k+1}\\
		0
	\end{pmatrix}$, where $z_{k+1}\in \mathbb{R}^r$.
	We have therefore that
	\begin{equation}\label{eq:d}
		\|P_k\bar{H}(x_{k+1}-\bar{x})\|=\|{P}_k^1 z_{k+1}\|,
	\end{equation}
	where ${P}_k^1 \in \mathbb{R}^{s\times r}$ is a submatrix of $\tilde{P}_k$, i.e., $\tilde{P}_k=\begin{pmatrix}
		{P}_k^1 & P_k^2
	\end{pmatrix}$.
	Notice that from the definition of $y_{k+1}$ and $z_{k+1}$, we have, by orthogonality of $U$, that
	$$\|z_{k+1}\|=\|Dy_{k+1}\|\overset{\cref{eq:aa1}}{=}\|\bar{H}(x_{k+1}-\bar{x} )\| \ge  \frac{\rho}{2} \|x_{k+1}-\bar{x}\| ,$$
	where the inequality follows from Lemma \ref{lem:1a}.
	Hence, from \cref{eq:c1} and \cref{eq:d}, we deduce that
	$$ \|P_k g_{k+1}\| \ge \rho\frac{\sigma_{\min}(P_k^1)}{2}\|x_{k+1}-\bar{x}\| -L_H \|P_{k}\|\| x_{k+1}-\bar{x}\|^2 .$$
	Using that $\|P_k\|$ is bounded, with probability at least $1-2\exp(-s)$, by Lemma \ref{lem:concentration_of_PP}, we deduce, as in the proof of Lemma \ref{lem:1a}, that for $k$ large enough:
	$$ \|P_k g_{k+1}\| \ge \rho\frac{\sigma_{\min}(P_k^1)}{4}\|x_{k+1}-\bar{x}\| \overset{\eqref{eq:upper_gnorm}}{\ge} \rho\frac{\sigma_{\min}(P_k^1)}{4} \frac{\|g_{k+1}\|}{2\lambda_{\max}(\bar{H})} .$$
That is:
	$$\|P_k g_{k+1}\| \ge \rho\frac{\sigma_{\min}(P_k^1)}{8\lambda_{\max}(\bar{H})}\|g_{k+1}\|. $$
\endproof

Similarly, we have the following lemma.
\begin{lemma}\label{lem:3}
{	Let $M \in \mathbb{R}^{n \times n}$ be any matrix. Under \Cref{assumption:local1} and \Cref{ass:local2a}, if $k$ is large enough and $s\ge r$, we have
	$$\frac{ \sigma_{\min}({P}_k^1)}{2}\|H_kM\| \le \|P_kH_kM\| .$$}
\end{lemma}
\proof{Proof.}
	The proof is very similar to the proof of Lemma \ref{lem:2}. We have
	\begin{equation}\label{eq:ab1}
		\|P_kH_kM\|  \ge \|P_k \bar{H}M\| -  \|P_k (H_k-\bar{H})M\|.
	\end{equation}
	Let $UDU^\top =\bar{H}$ be the diagonal decomposition of $\bar{H}$.
        Similarly to the proof of Lemma \ref{lem:2},
        $\tilde{P}_k:=P_kU$ is an i.i.d. random Gaussian matrix with the same distribution as $P_k$. Using $N:=U^\top M$, we have that $ P_k\bar{H}M=\tilde{P}_kDN$. Furthermore, since $D$ has rank $r<n$, we can write $DN=\begin{pmatrix}
		\tilde{N}\\
		0
	\end{pmatrix}$, where $\tilde{N}\in \mathbb{R}^{r\times n}$.
	We have therefore that
	\begin{equation}\label{eq:d1}
		\|P_k\bar{H}M\|=\|{P}_k^1 \tilde{N}\|,
	\end{equation}
	where ${P}_k^1 \in \mathbb{R}^{s\times r}$ is a submatrix of $\tilde{P}_k$, i.e., $\tilde{P}_k=\begin{pmatrix}
		{P}_k^1 & P_k^2
	\end{pmatrix}$.
	Therefore 
	\begin{equation}\label{eq:ac}
		\|P_k\bar{H}M\| \ge \sigma_{\min}({P}_k^1)\|\tilde{N}\|= \sigma_{\min}({P}_k^1) \|DN\|= \sigma_{\min}({P}_k^1) \|\bar{H}M\|,
	\end{equation}
where the last equality holds by orthogonality of $U$. We deduce therefore, from \cref{eq:ab1} and \cref{eq:ac} that
	$$ \|P_kH_kM\| \ge  \sigma_{\min}({P}_k^1) \|{H}_kM\| -  \sigma_{\min}({P}_k^1) \|(\bar{H}-H_k)M\| -\|P_k (H_k-\bar{H})M\|.$$
Since $H_k$ tends to $\bar{H}$, we have the desired result for $k$ large enough. 
\endproof

The next lemma, similar to Lemma 5.2 of \cite{ueda2010convergence}, is needed to control $\eta_k=c_1\Lambda_k  + c_2\norm{g_k}^\gamma$, where $\Lambda_k=\max(0,-\lambda_{\min}(P_kH_kP_k^\top))$.
\begin{lemma}\label{lem:Lambda_bound}
{	Under \Cref{assumption:local1},
	for $k$ large enough, we have that with probability at least $1-2\exp(-s)$,
	\begin{equation*}
		\Lambda_k \le \frac{\bar{\mathcal{C}}n}{s}  L_H \|x_k-\bar{x}\|.
	\end{equation*}}
\end{lemma}
\proof{Proof.}
	The result is obvious when $\Lambda_k=0$. Let us consider the case $\Lambda_k>0$.
	Let $\lambda_k = (\lambda_k^{(1)},\dots, \lambda_k^{(s)})$ be a vector of eigenvalues of $P_k \bar{H} P_k^\top$ and we write the eigenvalue decomposition of $P_k\bar{H} P_k^\top$ as follows:
	\begin{equation*}
		P_k \bar{H} P_k^\top = U_k^\top diag(\lambda_k) U_k.
	\end{equation*}
	Notice that $\lambda_{\min}(P_k H_k P_k^\top)I_s - U_k P_k H_k P_k^\top U_k^\top$
	is singular. Furthermore,
	$$\lambda_{\min}(P_k H_k P_k^\top)I_s - diag(\lambda_k)$$
	is not singular as $\lambda_{\min}(P_k H_k P_k^\top)<0$ by assumption and $diag(\lambda_k)$ is positive. We define
	\begin{equation*}
		A_k = (\lambda_{\min}(P_k H_k P_k^\top)I_s - diag(\lambda_k))^{-1} (\lambda_{\min}(P_k H_k P_k^\top)I_s - U_k P_k H_k P_k^\top U_k^\top),
	\end{equation*}
	which is therefore singular. 
	Notice furthermore that since $\lambda_{\min}(P_k H_k P_k^\top) <0$,
	\begin{equation}\label{eq:ae}
		\|(\lambda_{\min}(P_k H_k P_k^\top)I_s - diag(\lambda_k))^{-1}\| \le \frac{1}{-\lambda_{\min}(P_k H_k P_k^\top)} =\frac{1}{\Lambda_k}.
	\end{equation}
	Hence we have
	\begin{align*}
		1 &\le \|{I_s-A_k}\|\\
		&=\|I_s-(\lambda_{\min}(P_k H_k P_k^\top)I_s - diag(\lambda_k))^{-1} (\lambda_{\min}(P_k H_k P_k^\top)I_s - U_k P_k H_k P_k^\top U_k^\top)\| \\
		&=\| I_s - (\lambda_{\min}(P_k H_k P_k^\top)I_s - diag(\lambda_k))^{-1}\cdot\\
		& \hspace{10em} (\lambda_{\min}(P_k H_k P_k^\top)I_s - diag(\lambda_k) - U_k P_k (H_k-\bar{H}) P_k^\top U_k^\top)\|\\
		&=\|(\lambda_{\min}(P_k H_k P_k^\top)I_s - diag(\lambda_k))^{-1}U_k P_k (H_k-\bar{H}) P_k^\top U_k^\top\|\\
		&\overset{\cref{eq:ae}}{\le} \frac{1}{\Lambda_k} \|P_k P_k^\top\| \|H_k-\bar{H}\|\\
		&\overset{\mbox{Lemma }\ref{lem:concentration_of_PP}}{\le} \frac{1}{\Lambda_k} \frac{\bar{\mathcal{C}}n}{s}  L_H \|x_k-\bar{x}\|,
	\end{align*}
	where the first inequality is a well known inequality for a singular matrix and is proved in \cite[Lemma 5.1]{ueda2010convergence}.
\endproof

Let us recall that
$$d_k= -P_k^\top(P_k H_kP_k^\top+\eta_kI_s)^{-1}P_k  g_k, $$
and 
$$M_k=P_k H_kP_k^\top+\eta_kI_s.$$
\begin{lemma}\label{lem:dk}
{	Under \Cref{assumption:local1} and \Cref{ass:local2a}, if $s\ge r$,
	we have that for $k$ large enough, we have that with probability at least $1-2\exp(-s)$,
	$$ \|d_k\| \le \frac{4}{\sigma_{\min}(P_1^k)}\left(2+\frac{1}{c_1-1}\right)\sqrt{\frac{\bar{\mathcal{C}}n}{s}} \|x_k-\bar{x}\|,$$
where $P_k^1\in \mathbb{R}^{s \times r}$ is an $s \times r$ i.i.d. Gaussian matrix having the same distribution with $P_k$.}
\end{lemma}
\proof{Proof.}
	Notice first that by Taylor expansion of $t\mapsto \nabla f(\bar{x}+t(x_{k}-\bar{x}))$ and by \Cref{assumption:local1}, we have that
	\begin{equation}\label{eq:ad}
		\|g_{k}-\nabla f(\bar{x})-H_k(x_{k}-\bar{x})\|\le\frac{ L_H }{2}\|x_{k}-\bar{x}\|^2.
	\end{equation}
        The definition of $d_k$ leads to
	\begin{align}\label{eq:lemdk1}
		\|{d_k}\| 
		&= \|P_k^\top M_k^{-1}P_k g_k\|\nonumber\\
		&\overset{\nabla f(\bar x)=0}{=} \|{P_k^\top M_k^{-1}P_k (g_k -\nabla f(\bar x) - H_k(x_k-\bar x) + H_k(x_k-\bar x))}\|\nonumber\\
		&\le  \|{P_k}\|^2 \|{M_k^{-1}}\| \|{g_k -\nabla f(\bar x) - H_k(x_k-\bar x)}\| + \|{P_k^\top M_k^{-1}P_k H_k}\|\|{x_k-\bar x}\|\nonumber\\
		&\overset{\cref{eq:ad}}{\le} \frac{ L_H}{2} \|{P_k}\|^2 \|{M_k^{-1}}\|\|{x_k-\bar x}\|^2 + \|{P_k^\top M_k^{-1}P_k H_k}\|\|{x_k-\bar x}\|.
	\end{align}
	Let us first bound the first term in the right-hand side of \cref{eq:lemdk1}. When $k$ is large enough, with probability at least $1-2\exp(-s)$, we have by Lemma \ref{lem:concentration_of_PP}
	\begin{align*}
		\frac{ L_H}{2} \|{P_k}\|^2 \|{M_k^{-1}}\| 
		&\le \frac{ L_H}{2} \cdot \frac{\bar{\mathcal{C}}n}{s} \cdot \frac{1}{\lambda_{\min}(P_k H_k P_k^\top + c_1\Lambda_k I_s + c_2\|{g_k}\|^\gamma I_s)}\\
		&\le \frac{ L_H \bar{\mathcal{C}}n}{2c_2 s \|{g_k}\|^\gamma} \\
		&\overset{\cref{eq:localer}}{\le} \frac{ L_H\bar{\mathcal{C}} n}{2c_2 s \rho^\gamma \|x_k-\bar{x}\|^\gamma}.
	\end{align*}  
	Hence 
	\begin{equation}\label{eq:lemdk2}
		\frac{ L_H}{2} \|{P_k}\|^2 \|{M_k^{-1}}\| \|x_k-\bar{x}\|^2 \le \frac{ L_H \bar{\mathcal{C}} n}{2c_2 s \rho^\gamma } \|x_k-\bar{x}\|^{2-\gamma} .
	\end{equation}
	Next, we consider the second term $\|{P_k^\top M_k^{-1}P_k H_k}\| \|x_k-\bar{x}\|$.
	Notice that
	$$\|{P_k^\top M_k^{-1}P_k H_k}\| =\|{H_kP_k^\top M_k^{-1}P_k}\| \le \frac{2}{\sigma_{\min}(P_1^k)}\|P_kH_kP_k^\top M_k^{-1}\| \|P_k\|, $$
	where the inequality follows from Lemma \ref{lem:3}. We have
	\begin{align*}
		\|P_kH_kP_k^\top M_k^{-1}\| & = \|P_kH_kP_k^\top (P_kH_kP_k^\top +\eta_k I_s)^{-1}\| \\
		& \le  \|(P_kH_kP_k^\top+ \eta_kI_s)^\top (P_kH_kP_k^\top +\eta_k I_s)^{-1}\| + \eta_k \|(P_kH_kP_k^\top +\eta_k I_s)^{-1}\| \\
		&\le 1 + \frac{\eta_k}{\lambda_{\min}(P_k H_k P_k^\top +\eta_kI_s)} \\
		&\le 1+ \frac{ c_1\Lambda_k +c_2 \|g_k\|^{\gamma}}{ (c_1-1)\Lambda_k +c_2 \|g_k\|^{\gamma}} \\
		& \le 2+\frac{1}{c_1-1}.
	\end{align*}
	Therefore,
	$$\|{P_k^\top M_k^{-1}P_k H_k}\| \|x_k-\bar{x}\| \le \frac{2}{\sigma_{\min}(P_1^k)} \left(2+\frac{1}{c_1-1}\right)\|P_k\| \|x_k-\bar{x}\| \le  \frac{2}{\sigma_{\min}(P_1^k)}  \left(2+\frac{1}{c_1-1}\right)\sqrt{\frac{\bar{\mathcal{C}}n}{s}} \|x_k-\bar{x}\|,$$
	where the second inequality follows from Lemma \ref{lem:concentration_of_PP}. The results follows from \cref{eq:lemdk1} and \cref{eq:lemdk2} noticing that $\frac{\|x_k-\bar{x}\|^{2-\gamma}}{\|x_k-\bar{x}\|}$ tends to $0$, as $\gamma<1$, hence for $k$ large enough
	$$ \frac{ L_H\bar{\mathcal{C}} n}{2c_2 s \rho^\gamma } \|x_k-\bar{x}\|^{2-\gamma} \le \frac{2}{\sigma_{\min}(P_1^k)}\left(2+\frac{1}{c_1-1}\right)\sqrt{\frac{\bar{\mathcal{C}}n}{s}} \|x_k-\bar{x}\|. $$
\endproof

\begin{theorem}\label{th:3}
{	Under \Cref{assumption:local1} and \Cref{ass:local2a}, 
	for $k$ large enough and for any $s\ge r$, we have that with probability at least $1-2\exp(-s)$
	$$ \|x_{k+1}-\bar{x}\| \le \frac{c_2\Gamma}{\sigma^2_{\min}(P_k^1)} \|x_k -\bar{x}\|^{1+\gamma} ,$$
	where $\Gamma$ is some constant depending on $n$ and $s$, and where $P_k^1\in \mathbb{R}^{s \times r}$ is an $s \times r$ i.i.d. Gaussian matrix having the same distribution with $P_k$.}
\end{theorem}
\proof{Proof.}
	We have 
	\begin{align}\label{eq:super1}
		\|x_{k+1}-\bar{x} \| &\overset{\cref{eq:localer}}{\le} \frac{1}{\rho}\|g_{k+1}\| \notag\\
		& \le \frac{8 \lambda_{\max}(\bar{H})}{\rho^2 \sigma_{\min}(P^1_k)} \|P_k g_{k+1}\| \notag \\
		& \le  \frac{8 \lambda_{\max}(\bar{H})}{\rho^2 \sigma_{\min}(P^1_k)} \left( \|P_k (g_{k+1}-g_k-H_k(x_{k+1}-x_k))\| + \|P_kg_k +P_kH_k(x_{k+1}-x_k)\|\right),
	\end{align}
	where the first inequality holds by \cref{eq:localer}, and the second holds by Lemma \ref{lem:2}.
	
	By Lemma \ref{lem:dk} and an equation similar to \cref{eq:ad} (where $x_k$ is replaced by $x_{k+1}$ and $\bar{x}$ is replaced by $x_k$), we have that 
	\begin{equation}\label{eq:super2}
		\|P_k (g_{k+1}-g_k-H_k(x_{k+1}-x_k))\| \le  L_H  \|P_k\|\left(\frac{4}{\sigma_{\min}(P_1^k)}\left(2+\frac{1}{c_1-1}\right) \sqrt{\frac{\bar{\mathcal{C}}n}{s}}\right)^2 \|x_k-\bar{x}\|^2.
	\end{equation}
        From the updated rule $x_{k+1}=x_k -  t_k P_k^\T M_k^{-1} P_k g_k$ in \Cref{alg:RSN}, we see that $x_{k+1}-x_k=-  t_k P_k^\T M_k^{-1} P_k g_k$.
        From now on, we will show that  $t_k=1$ for $k$ large enough.
	Indeed by \eqref{eq:localt_k}, we have that
	$$f(x_k) - f(x_k+t_k'd_k) + \alpha t_k' g_k^\T d_k
	\ge \frac{\bar{\mathcal{C}}n}{2s} L_H {t_k'}^2\norm{d_k} \left(\frac{c_2s\norm{g_k}^\gamma}{\bar{\mathcal{C}}L_H n\norm{d_k}} - t_k'  \right) \norm{M_k^{-1} P_kg_k}^2. $$
	Hence, by \Cref{ass:local2a} and Lemma \ref{lem:dk}, we deduce that there exists some constant $\mathcal{C}_1$ such that
	$$f(x_k) - f(x_k+t_k'd_k) + \alpha t_k' g_k^\T d_k
	\ge \frac{\bar{\mathcal{C}}n}{2s} L_H {t_k'}^2\norm{d_k} \left(\frac{\mathcal{C}_1}{\|x_k-\bar{x}\|^{1-\gamma}} - t_k'  \right) \norm{M_k^{-1} P_kg_k}^2,$$
	proving that we can take $t'_k=1$ if $\|x_k-\bar{x}\|$ is small enough.\\

	Now notice that for $k$ large enough, $t_k=1$, hence
	\begin{align*}
		\|P_kg_k +P_kH_k(x_{k+1}-x_k)\|&= \|(I_s-P_kH_kP_k^\top(P_kH_kP_k^\top+\eta_kI_s)^{-1})P_kg_k\| \\
		& \le \|\eta_k (P_kH_kP_k^\top+\eta_kI_s)^{-1}P_kg_k \|  \\
		& \le\frac{ \eta_k}{\sigma_{\min}(P_k^\top)}  \|P_k^\top(P_kH_kP_k^\top+\eta_kI_s)^{-1}P_kg_k\| \\
		&=\frac{ \eta_k}{\sigma_{\min}(P_k^\top)} \|d_k\|.
	\end{align*}
Using that $\|\eta_k\|\le c_1\|\Lambda_k\|  + c_2\norm{g_k}^\gamma$ and that $\|g_k\|=O(\|x_k-\bar{x}\|)$ by Lemma \ref{lem:1a}, we deduce, by Lemmas \ref{lem:Lambda_bound} and \ref{lem:dk}, that there exists some constants $\alpha$, $\beta$, $\beta' >0$ such that with probability at least $1-2\exp(-s)$,
	\begin{align*}
		\frac{ \eta_k}{\sigma_{\min}(P_k^\top)} \|d_k\| &\le \frac{1}{\sigma^2_{\min}(P_k^\top)} \left(c_1 \alpha \|x_k-\bar{x}\|^2 + c_2 \beta \|x_k-\bar{x}\|\|g_k\|^{\gamma} \right) \\
		& \le  \frac{1}{\sigma^2_{\min}(P_k^\top)} \left(c_1 \alpha \|x_k-\bar{x}\|^2 + c_2 \beta' \|x_k-\bar{x}\|^{1+\gamma} \right),
	\end{align*}
	where we have used in the second inequality that $\|g_k\| \le O(\|x_k-\bar{x}\|)$.
	Now by \eqref{eq:super1}, \eqref{eq:super2} and the above, we obtain the desired result.
\endproof

Notice that by using \cite{rudelson}, we can furthermore bound $\frac{1}{\sigma_{\min}(P_k^1)}$, with high probability, by $O(\frac{1}{\sqrt{s}-\sqrt{r-1}})$.

Let us consider a function with low dimensionality, i.e. satisfying \eqref{eq:lowdim}. Let us write $\Pi=R^\top R$, where $R \in \mathbb{R}^{s \times n }$ and let us define  $g: y\in \mathbb{R}^s\mapsto f(R^\top)$. Hence, we have that $g(Rx)=f(\Pi x)=f(x)$. By denoting $y_k:=Rx_k \in \mathbb{R}^s$ and assuming that the function $g(y)$ is strongly convex, locally near $\bar{y}:=R\bar{x}$, it is easy to see that \Cref{ass:local2a} is satisfied for the sequence $\{y_k\}$, locally, i.e., there exists $\rho>0$ such that for $k$ large enough;
	$$\|\nabla g(y_k)\| \ge \rho \|y_k-\bar{y}\|$$
	holds. 
Hence, we can prove that there exists some constant $\mathcal{K}>$ such that the following inequality holds with high probability.
	$$ \|y_{k+1}-\bar{y}\| \le \mathcal{K} \|y_k -\bar{y}\|^{1+\gamma} .$$
	By strong convexity of $g(y)$, we know that there exists two constant $l_1>l_2>0$ such that
	$$l_2 (g(y_{k+1}-g(\bar{y})))\le \|y_{k+1}-\bar{y}\|\le l_1 (g(y_{k+1}-g(\bar{y}))).$$
	Hence by following the same proof as in Corollary \ref{cor:2}, we can obtain the following super-linear rate in expectation:
	\begin{theorem}
		 Assume that there exists a function $g: y\in \mathbb{R}^s\mapsto g(y)$ such that $g(Rx)=f(x)$, for some matrix $R \in \mathbb{R}^{s \times n }$ ($s<n$). If the function $g(y)$ is strongly convex, locally near $R\bar{x}$, then there exists a constant  $\mathcal{K}'>0$, such that if $k$ is large enough:
		 \begin{equation}\label{eq:superlinearE}
		 	\mathbb{E}\left[f(x_{k+1})-f(\bar{x})\right] \le \mathcal{K}'	\mathbb{E}\left[f(x_{k})-f(\bar{x})\right]^{1+\gamma},
		 \end{equation}
	\end{theorem}

\section{Numerical illustration}\label{sec:num}
{In this section, we illustrate numerically the randomized subspace regularized Newton method (RS-RNM). 
All results are obtained using Python scripts on a 12th Gen Intel(R) Core(TM) i9-12900HK 2.50 GHz with 64GB of RAM.
As a benchmark, we compare it against the gradient descent method (GD) and the regularized Newton method (RNM) \cite{ueda2010convergence}.
Here we do not aim to prove that our method is faster to the state-of-the-art methods but rather to illustrate the theoretical  results that have been proved in the previous sections.}

\subsection{Support vector regression}
The methods are tested on a support vector regression problem formulated as minimizing sum of a loss function and a regularizer:
\begin{equation}\label{eq:robust_regression}
  f(w) = \frac{1}{m} \sum_{i=1}^{m} \ell(y_i - x_i^\T w) + \lambda \norm{w}^2.
\end{equation}
Here, $(x_i, y_i)\in\R^{n}\times\{0,1\}\ (i=1,2,\dots,m)$ denote the training example and $\ell$ is the loss function. 
$\lambda$ is a constant of the regularizer and is fixed to 0.01 in the numerical experiments below.
We note that \ref{eq:robust_regression} is a type of (generalized) linear model used in the numerical experiments of \cite{gower2019rsn} and \cite{cubicNewton}.
As the loss function $\ell$, we use the following two functions known as robust loss functions: 
the Geman-McClure loss function ($\ell_1$) and the Cauchy loss function  ($\ell_2$) \cite{barron2019general} defined as
\begin{align*}
  \ell_1(t) &= \frac{2t^2}{t^2+4},\\
  \ell_2(t) &= \log\left(\frac{1}{2}t^2+1\right).
\end{align*}
Since both loss functions $\ell_1$ and $\ell_2$ are non-convex,
the objective function \ref{eq:robust_regression} is non-convex.


The search directions at each iteration in GD and RNM are given by
\begin{align*}
  d_k^{\rm GD} &= -\nabla f(w_k),\\
  d_k^{\rm RNM} &= -(\nabla^2 f(w_k) + c_1'\Lambda_k'I_n + c_2'\norm{\nabla f(w_k)}^{\gamma'}I_n)\nabla f(w_k),\\
    &\hspace{10em}(\Lambda'_k = \max(0, -\lambda_{\min}(\nabla^2 f(w_k)))
\end{align*}
and the step sizes are all determined by Armijo backtracking line search \eqref{eq:Armijo_rule} with the same parameters $\alpha$ and $\beta$ for the sake of fairness. 
The parameters shown above and in Section~\ref{sec:RSRNM_algorithm} are fixed as follows:
\begin{equation*}
  c_1=c_1' = 2, c_2=c_2'=1, \gamma = \gamma'=0.5, \alpha = 0.3, \beta= 0.5, s\in \{100,200,400\}.
\end{equation*}

We test the methods on internet advertisements dataset from UCI repository \cite{Dua:2019} that is processed so that the number of instances is 600($=m$) and the number of data attributes is 1500($=n$), 
and the results, until the stop condition $\norm{\nabla f(w_k)}<10^{-4}$ is satisfied, are shown in 
\Cref{fig:RSRNM_result1,fig:RSRNM_result2,fig:RSRNM_result3,fig:RSRNM_result4}.
Our first observation is that RS-RNM converges faster than GD.
GD does not require the calculation of Hessian or its inverse, making the time per iteration small. However, it usually needs a large number of iterations, resulting in slow convergence. 
Next, we look at the comparison between RNM and RS-RNM. 
From \Cref{fig:RSRNM_result1,fig:RSRNM_result2}, we see that RNM has the same or a larger decrease in the function value in one iteration than RS-RNM, and it takes fewer iterations to converge.
This is possibly due to the fact that RNM determines the search direction in full-dimensional space. 
In particular, it should be mentioned that RNM converges rapidly from a certain point on, as it is shown that RNM has a super-linear rate of convergence near a local optimal solution.
However, as shown in \Cref{fig:RSRNM_result3,fig:RSRNM_result4},
since RNM takes a long time to get close to the local solution due to the heavy calculation of the full regularized Hessian, RS-RNM results in faster convergence than RNM. {We also confirm on \Cref{fig:RSRNM_result2} that for small dimensions $s=100,200$  a linear convergence rate seems to be achieved. However for $s=400$ it seems that the method converges super-linearly. }

\begin{figure}[tb]
    \centering
   \begin{minipage}[t]{0.45\linewidth} 
        \centering
	 \includegraphics[width=0.9\columnwidth]{./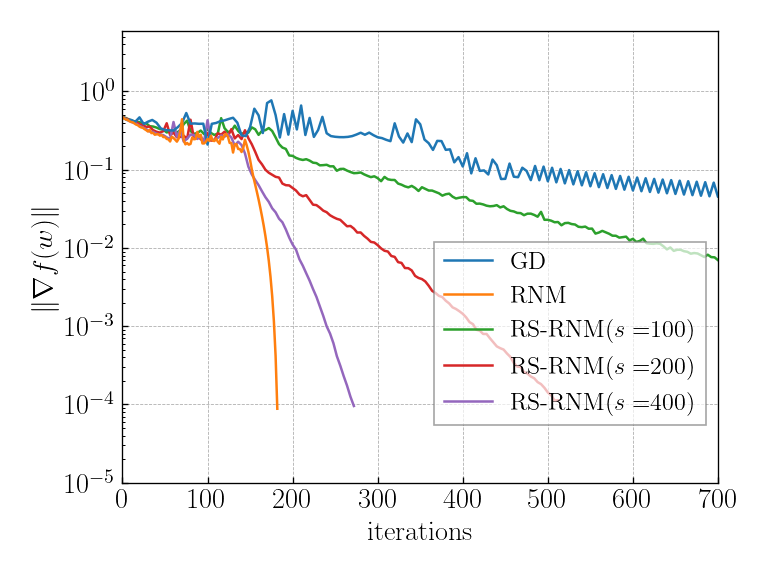}
	 \caption{ Iterations versus $\norm{\nabla f(w)}$ ($\log_{10}$-scale) for Geman-McClure loss}
	 \label{fig:RSRNM_result1}
  \end{minipage}
      \hfill
   \begin{minipage}[t]{0.45\linewidth} 
     \centering
	\includegraphics[width=0.9\columnwidth]{./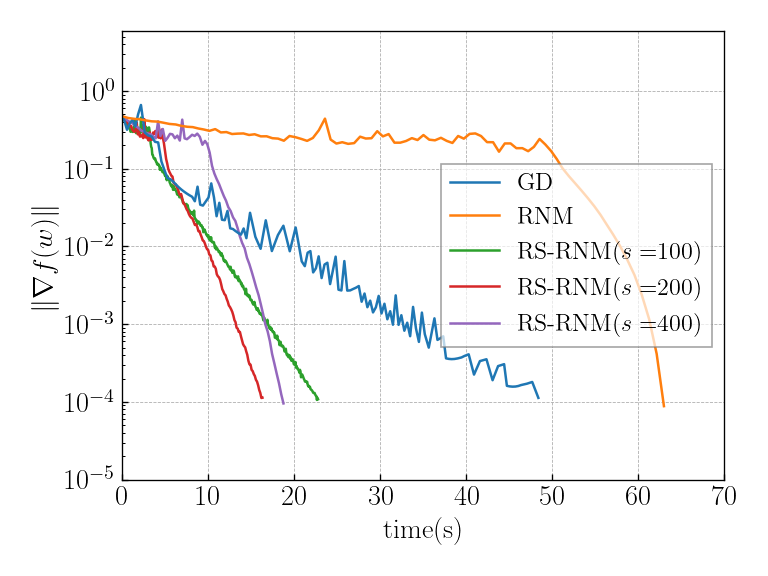}
	\caption{Computation time versus $\norm{\nabla f(w)}$ ($\log_{10}$-scale) for Geman-McClure loss}
	\label{fig:RSRNM_result3}
   \end{minipage}     
  \end{figure}
  
\begin{figure}[tbp]
      \centering
   \begin{minipage}[t]{0.45\linewidth} 
        \centering
	\includegraphics[width=0.9\columnwidth]{./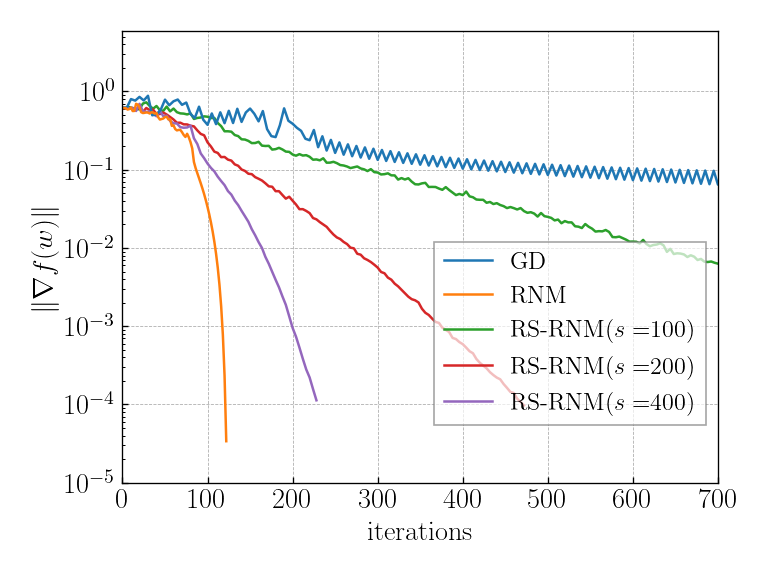}
	\caption{Iterations versus $\norm{\nabla f(w)}$ ($\log_{10}$-scale) for Cauchy loss}
	\label{fig:RSRNM_result2}
    \end{minipage}
      \hfill
   \begin{minipage}[t]{0.45\linewidth} 
     \centering
      \includegraphics[width=0.9\columnwidth]{./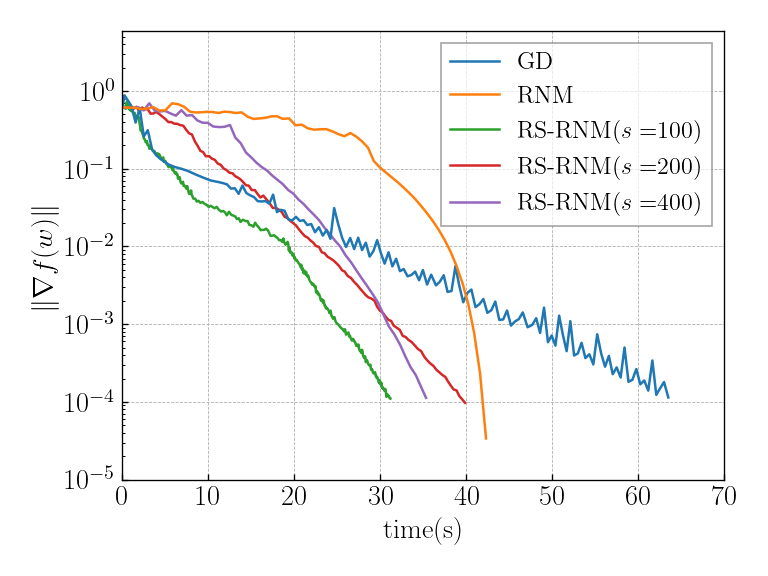}
	\caption{Computation time versus $\norm{\nabla f(w)}$ ($\log_{10}$-scale) for Cauchy loss}
	\label{fig:RSRNM_result4}
    \end{minipage}     
\end{figure}

\subsection{Low rank Rosenbrock function}
To properly illustrate the superlinear convergence proved in the low rank setting (c.f. Section \ref{sec:lowrank}), 
we conducted numerical experiments on a low rank Rosenbrock function:  
$f(x)=R(U^{\top}Ux)$, where 
$$ R(x) = \sum_{i=1}^{n-1}100(x_{i+1}- x_{i}^{2})^{2}+ (x_{i}- 1)^{2},$$
and $U \in \mathbb{R}^{r \times n}$ is a matrix whose columns are orthogonal. If we denote by 
$\Pi \in \mathbb{R}^{n \times n}$ the matrix $U^{\top}U$, we see that for all $x \in \mathbb{R}^n$, $f(x)=f(\Pi x)$, hence the Hessian of $f$ is of rank $r$ for all $x \in \mathbb{R}^n$. The parameters in Section~\ref{sec:RSRNM_algorithm} are fixed as follows:
\begin{equation*}
	c_1=c_1' = 2, c_2=c_2'=1, \gamma = \gamma'=0.5, \alpha = 0.3, \beta= 0.5, s\in \{100,200,600\}.
\end{equation*}

\begin{figure}[tbp]
  \centering
    \begin{minipage}[t]{0.45\linewidth} 
     \centering
	\includegraphics[width=0.9\columnwidth]{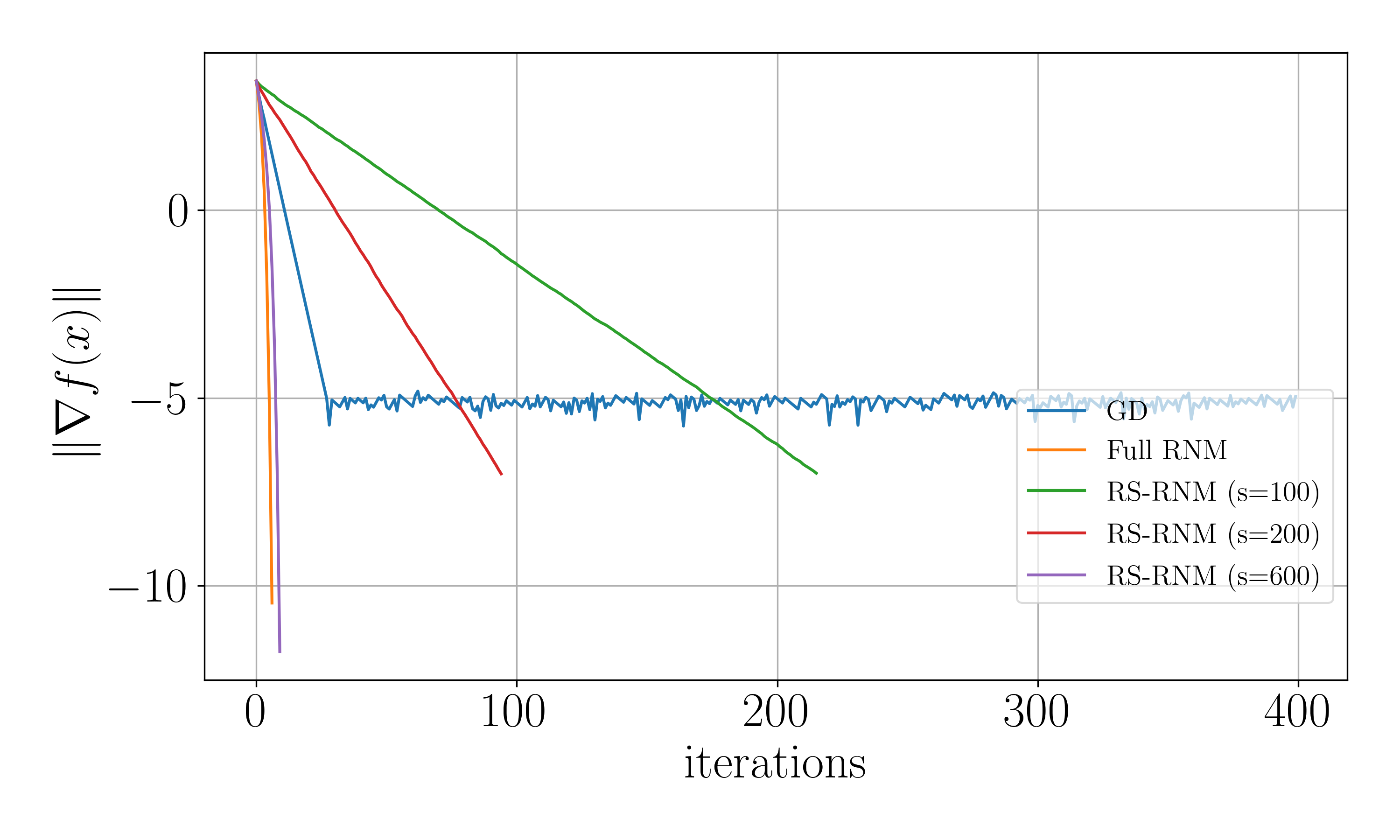}
	\caption{Iterations versus $\norm{\nabla f(x)}$ ($\log_{10}$-scale) for low rank Rosenbrock function}
	\label{fig:Rosen_iter}
    \end{minipage}
       \hfill
   \begin{minipage}[t]{0.45\linewidth} 
        \centering
	\includegraphics[width=0.9\columnwidth]{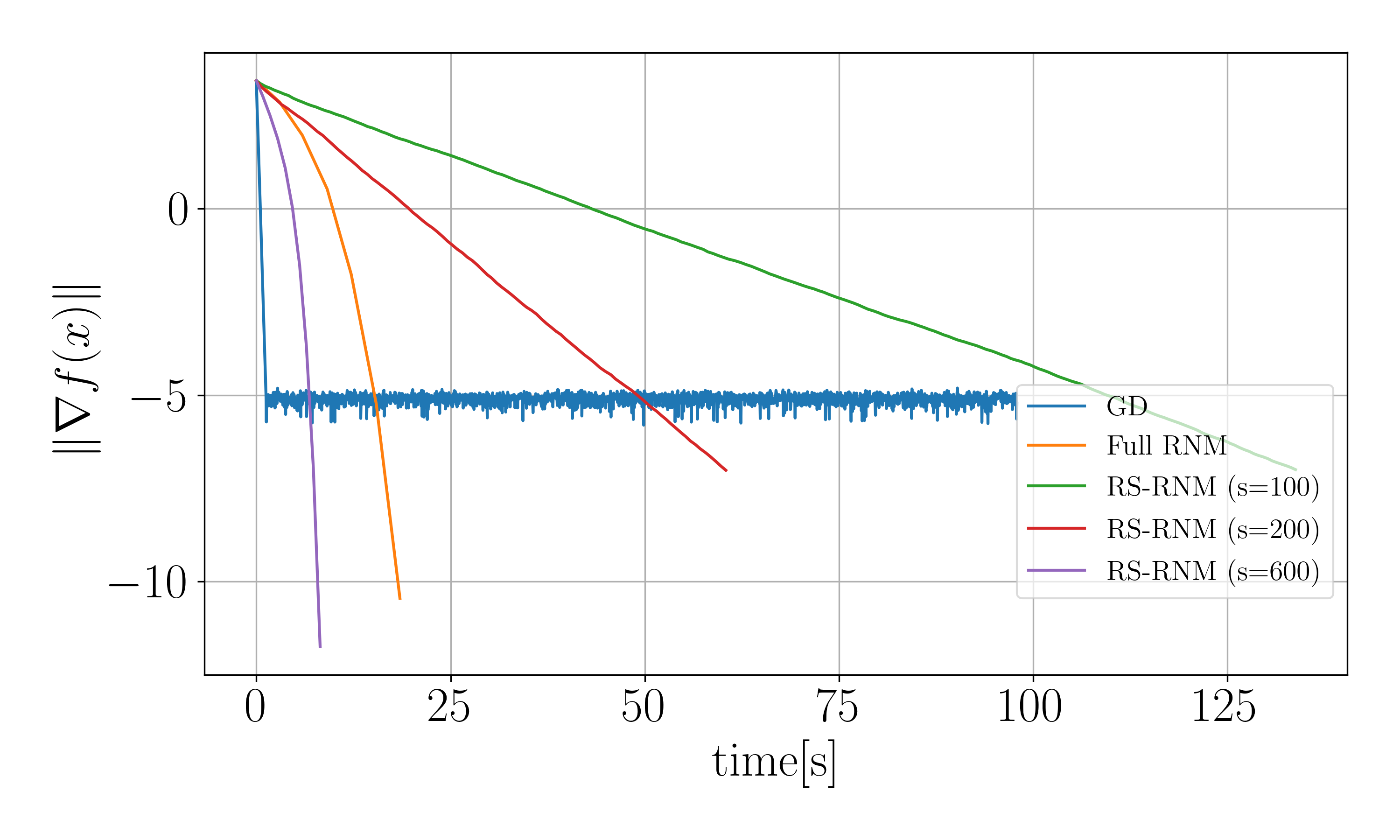}
	\caption{Computation time versus $\norm{\nabla f(x)}$ ($\log_{10}$-scale) for low rank Rosenbrock function}
	\label{fig:Rosen_time}
    \end{minipage}
\end{figure}

\Cref{fig:Rosen_time,fig:Rosen_iter} show experiments for $n=3000$ and $r=500$. 
We selected three values for $s$, two ($s=100,200$) smaller than $r$ and one ($s=600$), larger than $r$. The results confirm the results of Section \ref{sec:local1}: when $s>r$ we have local superlinear convergence, otherwise the convergence is only linear locally.

\subsection{Convolutional neural network}

We tested our method on a micro Convolutional Neural Network (CNN) using the MNIST dataset in \cite{mnist}. We used the cross-entropy loss function 
$m=256$ images. Our CNN is made of the following factors:
\begin{itemize}
	\item one convolutional layer (1 input channel, 1 output channel, kernel size 3),
	\item a ReLU activation,
	\item a max pooling layer (kernel size 2),
	\item a fully connected layer mapping the flattened feature vector to 10 classes.
	\end{itemize}
This setup is intended to demonstrate the differences between the three methods in a controlled, small-scale scenario.
This problem is formulated as 
\begin{equation}
	\min_{w \in \mathbb{R}^n}\frac{1}{m}\sum_{i=1}^m\mathcal{L}(\mathcal{M}(w,x_i),y_i),
	\notag
\end{equation}
where $(x_i,y_i)$ denotes the MNIST dataset with $x_i\in \mathbb{R}^{784}$ and $y_i \in \{0,1\}^{10}$ ($m = 256$), $\mathcal{L}$ denotes the Cross Entropy Loss function, and $\mathcal{M}$ denotes the CNN with $n=1710$ parameters. The parameters  in Section~\ref{sec:RSRNM_algorithm} are fixed as follows:
\begin{equation*}
	c_1=c_1' = 2, c_2=c_2'=1, \gamma = \gamma'=0.5, \alpha = 0.3, \beta= 0.5, s\in \{100,200,500\}.
\end{equation*}

The results are show in \Cref{fig:CNN_time,fig:CNN_iter}.
We notice that our method outperforms GD which is stuck at some stationary point and RNM which is to slow to converge.

\begin{figure}[tbp!]
    \centering
   \begin{minipage}[t]{0.45\linewidth} 
     \centering
	\includegraphics[width=0.9\columnwidth]{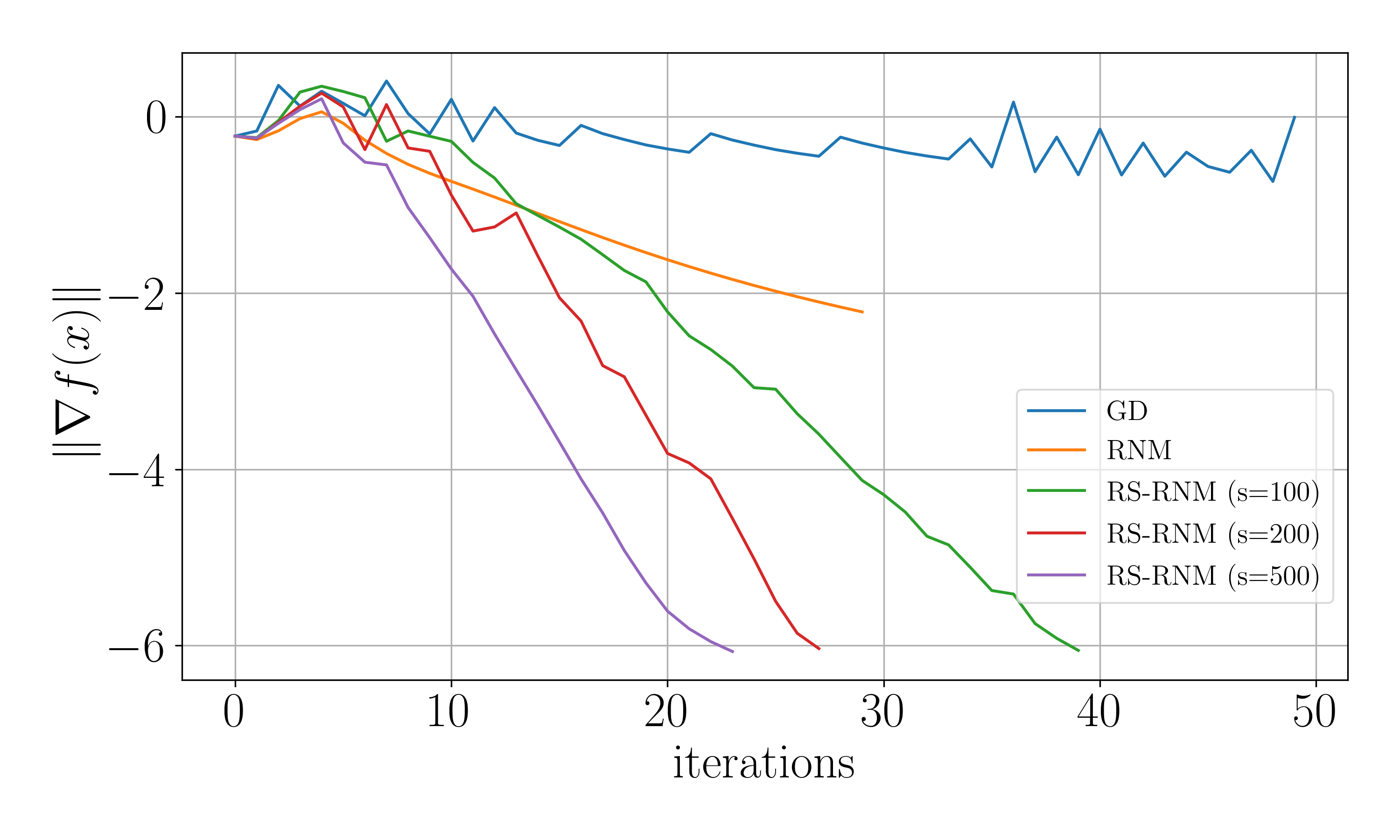}
	\caption{Iterations versus $\norm{\nabla f(w)}$ ($\log_{10}$-scale) for CNN with the MNIST dataset}
	\label{fig:CNN_iter}
   \end{minipage}
       \hfill
   \begin{minipage}[t]{0.45\linewidth} 
        \centering
	\includegraphics[width=0.9\columnwidth]{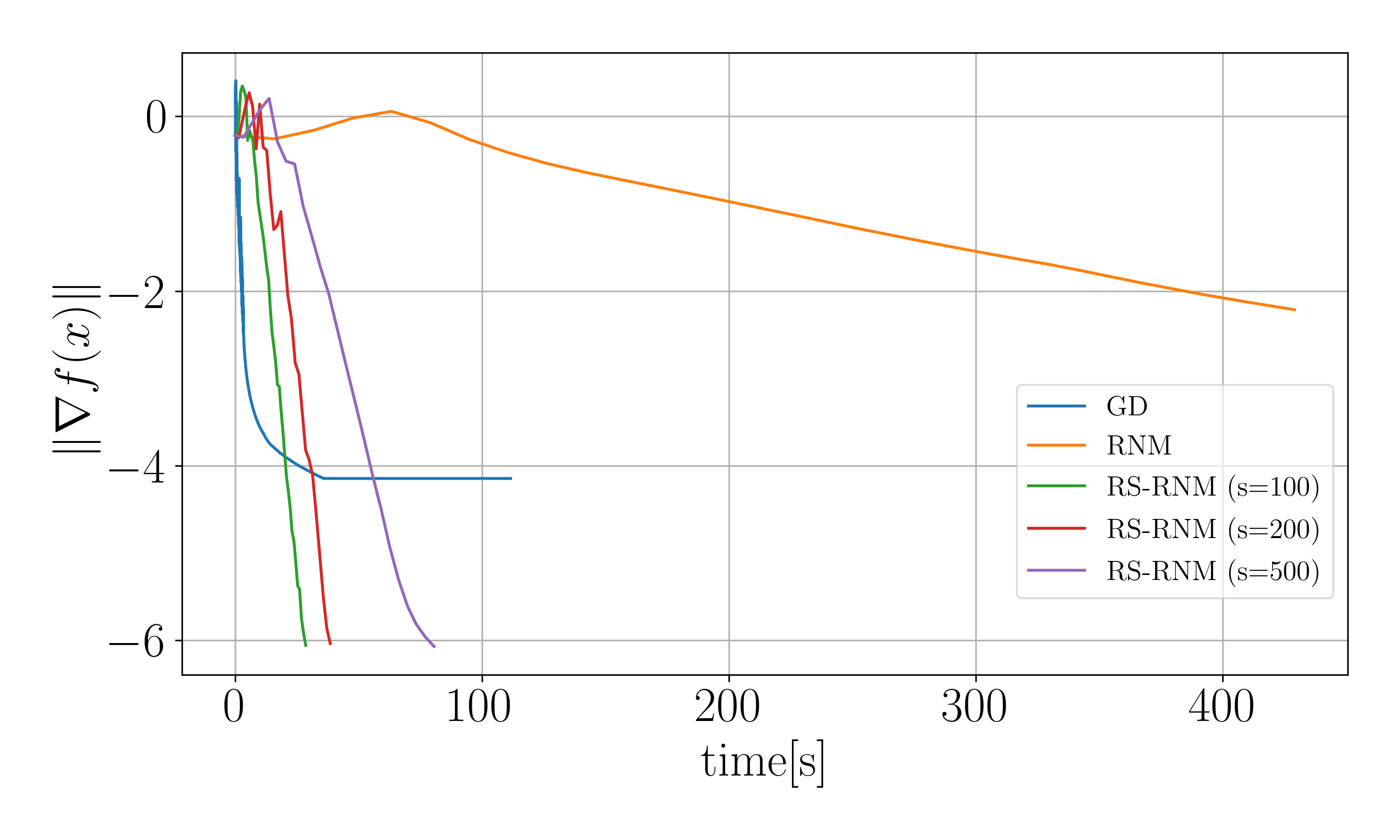}
	\caption{ Computation time versus $\norm{\nabla f(w)}$ ($\log_{10}$-scale) for CNN with the MNIST dataset}
	\label{fig:CNN_time}
         \end{minipage}
\end{figure}

\subsection{Choice of $s$}

In the special case where the Hessian truly has low-rank structure, setting $s$ to this value can substantially speed up convergence, provided the rank is not prohibitively large. However, in more general problems, especially where the Hessian does not exhibit pronounced low-rank properties or its effective rank is unknown, preselecting $s$ is more challenging. One might try to start with some constant value of $s$ and increasing it gradually since the best $s$ ultimately depends on problem-specific characteristics and computational resources.
\color{black}


\section{Conclusions}\label{sec:conclusions}
Random projections have been applied to solve optimization problems in suitable lower-dimensional spaces  
in various existing works.
In this paper, we proposed the randomized subspace regularized Newton method (RS-RNM) for a non-convex twice differentiable function
in the expectation that a framework for the full-space version \cite{ueda2010convergence, ueda2014regularized}  
could be used; 
indeed, we could prove the stochastic variant of the same order of iteration complexity, i.e., the global complexity bound of the algorithm:
the worst-case iteration number $m$ that achieves $\min_{k=0,\ldots,m-1} \norm{\nabla f(x_k)}\le \eps$ 
is $O(\eps^{-2})$
when the objective function has Lipschitz Hessian. 
On the other hand, although RS-RNM uses second-order information similar to the regularized Newton method having 
 a super-linear convergence, we proved that it is not possible, in general, to achieve local super-linear convergence and that local linear convergence is the best rate we can hope for in general. {We were however able to prove super-linear convergence in the particular case where the Hessian is rank deficient at a local minimizer. In this paper we choose to thoroughly investigate local convergence rate for the Newton-based method. One could possibly, in a future work, extend these results to a state-of-the-art second order iterative method and compare the resulting subspace method with other state-of-the-art algorithms, as \cite{gratton,zhou2025,zhu2024}. 



\bibliographystyle{siamplain}
\bibliography{ref}
\end{document}